\documentclass[12pt,reqno]{amsart}
\usepackage{amssymb,amscd}
\usepackage[alphabetic]{amsrefs}
\usepackage[all]{xy}
\usepackage[headings]{fullpage}
\usepackage[T1]{fontenc}
\usepackage{libertine}
\usepackage{stackengine}
\usepackage{mathtools}
\usepackage{graphicx}

\numberwithin{equation}{section}

\swapnumbers
\theoremstyle{plain}
\newtheorem{thm}[subsection]{Theorem}
\newtheorem{thmss}[subsubsection]{Theorem}
\newtheorem{prop}[subsection]{Proposition}
\newtheorem{propss}[subsubsection]{Proposition}
\newtheorem{lemma}[subsubsection]{Lemma}
\newtheorem{cor}[subsection]{Corollary}
\newtheorem{corss}[subsubsection]{Corollary}

\newtheorem*{thm*}{Theorem}
\newtheorem{prop-def}[subsubsection]{Proposition-Definition}

\theoremstyle{definition}
\newtheorem{defn}[subsubsection]{Definition}
\newtheorem{defns}[subsubsection]{Definitions}
\newtheorem*{defn*}{Definition}

\theoremstyle{remark}
\newtheorem{rem}[subsection]{Remark}

\newtheorem{remss}[subsubsection]{Remark}

\newtheorem{s-example}[subsection]{Example}






\numberwithin{equation}{subsection}

\usepackage[OT2,T1]{fontenc}
\DeclareSymbolFont{cyrletters}{OT2}{wncyr}{m}{n}
\DeclareMathSymbol{\sha}{\mathalpha}{cyrletters}{"58}


\newcommand{\CC}{\mathcal{C}}

\newcommand{\DD}{\mathcal{D}}

\newcommand{\WW}{{\mathcal{W}}}

\newcommand{\HH}{{\mathcal{H}}}

\newcommand{\F}{\mathbb{F}}
\newcommand{\Fp}{{\mathbb{F}_p}}

\newcommand{\Fpbar}{{\overline{\mathbb{F}}_p}}

\newcommand{\Z}{\mathbb{Z}}



\newcommand{\<}{\langle}
\renewcommand{\>}{\rangle}
\newcommand{\into}{\hookrightarrow}
\newcommand{\onto}{\twoheadrightarrow}
\newcommand{\isoto}{\,\tilde{\to}\,}

\newcommand{\nodiv}{\not|}

\def\nodiv{\mathrel{\mathchoice{\not|}{\not|}{\kern-.2em\not\kern.2em|}
{\kern-.2em\not\kern.2em|}}}

\newcommand{\G}{\mathbb{G}}


\DeclareMathOperator{\im}{Im}
\DeclareMathOperator{\Ker}{Ker}

\DeclareMathOperator{\Hom}{Hom}

\DeclareMathOperator{\lcm}{lcm}

\DeclareMathOperator{\Sel}{Sel}

\def\clap#1{\hbox to 0pt{\hss#1\hss}}

\makeatletter
\newcommand*\bigcdot{\mathpalette\bigcdot@{.5}}
\newcommand*\bigcdot@[2]{\mathbin{\vcenter{
               \hbox{\scalebox{#2}{$\m@th#1\bullet$}}}}}
\makeatother

\newcommand{\D}{\mathbb{D}}
\renewcommand{\and}{\quad\text{and}\quad}
\DeclareMathOperator{\pat}{Pat}

\begin{document}
\title{On $BT_1$ group schemes and Fermat Curves}

\author{Rachel Pries}
\address{Department of Mathematics \\ Colorado State University
  \\ Fort Collins, CO~~80523 USA}
\email{pries@math.colostate.edu}
\author{Douglas Ulmer}
\address{Department of Mathematics \\ University of Arizona
 \\ Tucson, AZ~~85721 USA}
\email{ulmer@math.arizona.edu}

\date{\today}

\subjclass[2010]{Primary 11D41, 11G20, 14F40, 14H40, 14L15;
Secondary 11G10, 14G17, 14K15, 14H10}

\keywords{Curve, finite field, Jacobian, abelian variety, Fermat
  curve, Frobenius, Verschiebung, group scheme, Ekedahl--Oort type, de
  Rham cohomology, Dieudonn\'e module}

\begin{abstract}
  Let $p$ be a prime number and let $k$ be an algebraically closed
  field of characteristic $p$.  A $BT_1$ group scheme over $k$ is a
  finite commutative group scheme which arises as the kernel of $p$ on
  a $p$-divisible (Barsotti--Tate) group.  We compare three
  classifications of $BT_1$ group schemes, due in large part to Kraft,
  Ekedahl, and Oort, and defined using words, canonical filtrations,
  and permutations.  Using this comparison, we determine the
  Ekedahl--Oort types of Fermat quotient curves and we compute four
  invariants of the $p$-torsion group schemes of these curves.
  \end{abstract}




\maketitle

\section{Introduction}
Fix a prime number $p$ and let $k$ be an algebraically closed field of
characteristic $p$.  Suppose $C$ is a smooth irreducible projective
curve of genus $g$ over $k$.  Its Jacobian ${\rm Jac}(C)$ is a
principally polarized abelian variety of dimension $g$.  The
$p$-torsion group scheme $G={\rm Jac}(C)[p]$ is a polarized $BT_1$
group scheme of rank $p^{2g}$.  The isomorphism class of $G$ is
uniquely determined by the de Rham cohomology of $C$ \cite{Oda69}.

An important way to describe the isomorphism class of $G$ is via a
combinatorial invariant called the \emph{Ekedahl--Oort type}, or E--O
type.  The E--O type is a sequence $[\psi_1, \ldots, \psi_g]$ of
integers such that $\psi_{i} -\psi_{i-1} \in \{0,1\}$ for
$1 \leq i \leq g$ (letting $\psi_0=0$).  The $p$-rank and $a$-number
of $G$ can be quickly computed from the E--O type.  The E--O type
gives key information about the stratification of the moduli space of
principally polarized abelian varieties of dimension $g$.

Recently, there has been a lot of interest in studying $p$-torsion
group schemes for curves, for example \cite{Moonen04},
\cite{ElkinPries13}, \cite{PriesWeir15},
\cite{DevalapurkarHallidaypp17}, and \cite{Moonenpp20}.  Despite this,
there are very few examples of curves for which the Ekedahl--Oort type
has been computed.  In this paper, our main result is
Theorem~\ref{thm:FEO-general}, in which we determine the Ekedahl--Oort
type for the Jacobian $J_d$ of the smooth projective curve $\CC_d$
that has affine equation $y^d=x(1-x)$, for all positive integers $d$
that are relatively prime to $p$.
 
Here is our motivation for studying the curve $\CC_d$.  First, it is a
quotient of the Fermat curve $F_d$ of degree $d$.  There has been a
lot of work on $p$-divisible groups of Fermat curves.  For example,
Yui determined their Newton polygons \cite[Thm.~4.2]{Yui80}.  Several
authors studied the $p$-ranks and $a$-numbers of Fermat curves, e.g.,
\cite{KodamaWashio88}, \cite{Gonzalez97}, and
\cite{MontanucciSpeziali18}.  It turns out that most of the
interesting features of the $p$-torsion group scheme for $F_d$ are
already present for that of $\CC_d$.

Second, the de Rham cohomology of $\CC_d$, with its Frobenius and
Verschiebung operators, can be described in a very clean way.
Typically it is complicated to discern the Ekedahl--Oort type from a
computation of these operators.  In the case of $\CC_d$, the actions
of $F$ and $V$ are given by permutation data that is easy to analyze.
For this reason, we are able to describe the mod $p$ Dieudonn\'e
module of $J_d[p]$ in several different ways and give a closed form
formula for its Ekedahl--Oort type.

Third, in our companion paper \cite{PriesUlmerBT1s}, we prove that
every polarized $BT_1$-group scheme over $k$ occurs as a direct factor
of the $p$-torsion group scheme of $J_d$ for infinitely many $d$ as
long as $p>3$.  In other words, the class of curves $\CC_d$ for
$p \nmid d$ includes all possible structures of $p$-torsion group
schemes of principally polarized abelian varieties.

To prove Theorem~\ref{thm:FEO-general}, we rely on a detailed
comparison of three classifications of $BT_1$ group schemes,
essentially due to Kraft, Ekedahl, and Oort.  We develop this
comparison in Sections~\ref{s:groups-and-modules} through
\ref{s:refinements}, working in general, not restricting to Jacobians
of curves.  The Kraft classification uses words on a two-letter
alphabet $\{f,v\}$; it interacts well with direct sums and identifies
the indecomposable objects in the category.  The Ekedahl--Oort
classification uses the interplay between $F$ and $V$ to build a
``canonical filtration'' and is well suited to moduli-theoretic
questions.  The third classification is given in terms of a finite set
$S$ partitioned into two subsets and a permutation of $S$, and it is
particularly well suited to studying Fermat curves.  The material in
these sections is fundamental to the proof of the theorem and does not
appear in a self-contained way elsewhere in the literature.

In Section~\ref{s:Homs}, we study homomorphisms between $BT_1$ group
schemes to analyze two well-known invariants, the $p$-rank and
$a$-number, and two newer invariants related to the $p$-torsion group
scheme of a supersingular elliptic curve, called the
$s_{1,1}$-multiplicity and $u_{1,1}$-number.

In Section~\ref{s:FJ}, we recall two results about the $BT_1$ modules
of Fermat curves and their quotients from \cite{PriesUlmerBT1s}.  In
Section~\ref{s:FEO-general}, we prove the main result about the
Ekedahl--Oort type of $\CC_d$ for all positive integers $d$ relatively
prime to $p$.

In the rest of the paper, we provide explicit examples of the
Ekedahl--Oort structure of $\CC_d$ and the four associated invariants
under various conditions on $d$ and $p$.  In Section~\ref{s:FEO-p=2},
we separate out the case $p=2$.  In Section~\ref{s:anumber}, we
determine the $a$-number of $\CC_d$ for all $d$ relatively prime to
$p$.  In Sections~\ref{s:encompassing} and \ref{s:hermitian}, we
analyze the cases $d=p^a-1$ and $d=p^a+1$, for any natural number $a$.
We call $d=p^a-1$ the ``encompassing case''; it is in some sense the
general case because for every $d'$, the group scheme
${\rm Jac}(C_{d'})[p]$ is a direct factor of ${\rm Jac}(C_d)[p]$ where
$d=p^a-1$ for some $a$.

\subsection*{Acknowledgements} 
Author RP was partially supported by NSF grant DMS-1901819, and
author DU was partially supported by Simons Foundation grants 359573
and 713699.

\section{Groups and modules}\label{s:groups-and-modules}
In this section, we review certain categories
of group schemes and their Dieudonn\'e modules.  

\subsection{Group schemes of $p$-power order
  and their Dieudonn\'e modules}
Our general reference for the assertions in
this section is \cite{Fontaine77}.

Let $W(k)$ denote the Witt vectors over $k$.  Write $\sigma$ for the
absolute Frobenius of $k$, and extend it to $W(k)$ by
$\sigma(a_0,a_1,\dots)=(a_0^p,a_1^p,\dots)$.  Define the
\emph{Dieudonn\'e ring} $\D=W(k)\{F,V\}$ as the $W(k)$-algebra
generated by symbols $F$ and $V$ with relations
\begin{equation} \label{EbasicFV}
  FV=VF=p, \ F\alpha=\sigma(\alpha)F,
  \text{ and } \alpha V=V\sigma(\alpha) \text{ for } \alpha\in W(k).
\end{equation}  Let
$\D_k=\D/p\D\cong k\{F,V\}$.

Let $G$ be a finite commutative group scheme of order $p^\ell$ over
$k$. Let $M(G)$ be the (contravariant) Dieudonn\'e module of $G$.
This is a $W(k)$-module of length $\ell$ with semi-linear
operators $F$ and $V$.  

Let $G^D$ be the Cartier dual of $G$.  For $M$ a $\D$-module of finite
length over $W(k)$, let $M^*$ be its dual module.  A basic result of
Dieudonn\'e theory is that $M(G^D)\cong M(G)^*$.

\subsection{$BT_1$ group schemes and $BT_1$ modules}\label{ss:BT1}

By definition, a \emph{$BT_1$ group scheme} over $k$ is a finite
commutative group scheme $G$ that is killed by $p$ and that has the
properties
\[\Ker(F:G\to G^{(p)})=\im(V:G^{(p)}\to G)\quad\text{and}\quad
  \im(F:G\to G^{(p)})=\Ker(V:G^{(p)}\to G).\]
The notation $BT_1$ is an abbreviation of ``Barsotti--Tate of level 1''
reflecting the fact \cite[Prop.~1.7]{Illusie85} that $BT_1$ group
schemes are precisely those which occur as the kernel of $p$ on a
Barsotti--Tate (=$p$-divisible) group.

By definition, a \emph{$BT_1$ module} over $k$ is a $\D_k$-module $M$
of finite dimension over $k$ such that
\[\Ker(F:M\to M)=\im(V:M\to M)\quad\text{and}\quad
  \im(F:M\to M)=\Ker(V:M\to M).\]
(Oort also calls these $DM_1$ modules.)
Clearly, a $\D_k$-module $M$ is a $BT_1$ module if and only if it is the
Dieudonn\'e module of a $BT_1$ group scheme over $k$.

The group schemes $\Z/p\Z$, $\mu_p$, and $G_{1,1}$ are $BT_1$ group
schemes.  On the other hand, $\alpha_p$ is not, since
$\Ker F=M(\alpha_p)\neq 0=\im V$.

A $BT_1$ group scheme $G$ is
\emph{self-dual} if there exists an isomorphism
$G\cong G^D$.  Similarly, a $BT_1$ module $M$ is \emph{self-dual} if
$M\cong M^*$.  Clearly, $G$ is self-dual if and only if $M(G)$ is
self-dual. 

One may ask that a duality $\phi:G\to G^D$ be skew, meaning that
$\phi^D:G\cong(G^D)^D\to G^D$ satisfies $\phi^D=-\phi$.  This is
equivalent to any of the following three conditions:
\begin{itemize}
\item the bilinear pairing $G\times G\to\G_m$
  induced by $\phi$ is skew-symmetric;
\item the symmetry $M(\phi):M(G)^*\to M(G)$ is skew (meaning
  $M(\phi)^*=-M(\phi)$);
\item the induced bilinear pairing $M(G)^*\times M(G)^*\to k$ is
    skew-symmetric.
\end{itemize}
Interestingly, when $p=2$, there exist $BT_1$ group schemes $G$ with
an \emph{alternating} pairing ($\<x,x\>=0$ for all $x$) such that the
induced pairing on $M(G)$ is skew-symmetric but not alternating.

For this reason, one defines a \emph{polarized $BT_1$ module} over $k$
as a $BT_1$ module over $k$ with a non-degenerate, alternating
pairing, and one defines a \emph{polarized $BT_1$ group scheme} over
$k$ as a $BT_1$ group scheme $G$ over $k$ with a pairing that induces
a non-degenerate, alternating pairing on $M(G)$.
Corollary~\ref{cor:self-dual=>polarized} says that every self-dual
$BT_1$ module admits a polarization which is unique up to
isomorphism. 

If $A$ is a principally polarized abelian variety of dimension $g$
over $k$ , its $p$-torsion subscheme $A[p]$ is naturally a polarized
$BT_1$ group scheme of order $p^{2g}$.

\section{Review of classifications of $BT_1$ group
  schemes}\label{s:BT1} 
In this section, we review bijections between isomorphism classes of
$BT_1$ modules over $k$ and three other classes of objects of
combinatorial nature.  More precisely, following Kraft \cite{Kraft75},
Ekedahl, and Oort \cite{Oort01}, we will construct a diagram:
\begin{equation}\tag{3.1}\label{eq:perms-BT1s}
\xymatrix{
 \text{$BT_1$ modules}\ar@{->}[r]^\sim\ar@{<-}[d]_\sim
    &\text{canonical types}\ar@{->}[d]^\sim\\
\text{\stackanchor{multisets of (primitive)}{cyclic words on
    $\{f,v\}$}}\ar@{<->}[r]
&\text{\stackanchor{(admissible) permutations}{of $S=S_f\cup S_v$.}}
}
\end{equation}

The top horizontal map is the Ekedahl--Oort classification of $BT_1$
modules and the left vertical map is the Kraft classification.  While
some of the material in this section is known, there are significant
reasons to cover it, and we take the opportunity to correct a few
minor imprecisions in the literature.  First, it is helpful to have a
self-contained short description of this material.  Second, we need a
precise dictionary between the classification on the lower right of
the diagram, given in terms of permutations, and the others.  The
permutation classification is not as well known and is particularly
well suited to studying the $p$-torsion group schemes of quotients of
Fermat curves.  Third, the work of Oort uses covariant Dieudonn\'e
theory, but the contravariant theory is more convenient for studying
Fermat curves.

There are other classifications of $BT_1$ modules: one due to Moonen
\cite{Moonen01}, involving cosets of Weyl groups; and another due to 
van der
Geer \cite[\S6]{vanderGeer99} in terms of Young diagrams and partitions.
We will not need this material, so we omit any further discussion.


\subsection{Words and permutations}\label{ss:words-perms}
Let $\WW$ be the monoid of words $w$ on the two-letter alphabet
$\{f,v\}$, and write $1$ for the empty word.  By  convention, the
first (resp.\ last) letter of $w$ is its leftmost (resp.\ rightmost)
letter.  The complement $w^c$ of $w$ is the word obtained by
exchanging $f$ and $v$ at every letter.

For a positive integer $\lambda$, write $\WW_\lambda$ for the words of
length $\lambda$.  Endow $\WW_\lambda$ with the lexicographic ordering
with $f<v$.
If $w \in \WW_\lambda$, we write $w=u_{\lambda-1}\cdots u_0$ where
$u_i\in\{f,v\}$ for $0 \leq i \leq \lambda-1$.  Define an action of
the group $\Z$ on $\WW$ by requiring that $1\in\Z$ map
$w=u_{\lambda-1}\cdots u_0$ to $u_0u_{\lambda-1}\cdots u_1$.  If $w$
and $w'$ are in the same orbit of this action, we say $w'$ is a
\emph{rotation} of $w$.  The orbit $\overline{w}$ of $w$ under the
action of $\Z$ is called a \emph{cyclic word}.  Write $\overline\WW$
for the set of cyclic words.

A word $w$ is \emph{primitive} if $w$ is not of the from $(w')^e$ for
some word $w'$ and some integer $e>1$.  If $w$ has length $\lambda>0$,
it is primitive if and only if the subgroup of $\Z$ fixing $w$ is
exactly $\lambda\Z$.  Write $\WW'$ for the set of primitive words 
and $\overline\WW'$ for the set of primitive cyclic words.
At the level of multisets, one can define a retraction of
$\overline\WW'\subset\overline\WW$ by sending the class of a word
$w=(w')^e$ where $w'$ is primitive to the class of $w'$ with
multiplicity $e$.





Consider a finite set $S$ written as the disjoint union
$S=S_f\cup S_v$ of two subsets and a permutation $\pi:S\to S$.  
Two such collections of data $(S=S_f\cup S_v,\pi)$ and
$(S'=S'_f\cup S'_v,\pi')$ are isomorphic if there is a bijection
$\iota: S\to S'$ such that $\iota(S_f)=S_f'$, $\iota(S_v)=S_v'$, and
$\iota \pi = \pi' \iota$.

Given $(S=S_f\cup S_v,\pi)$, there is an associated multiset of cyclic
words on $\{f,v\}$ defined as follows.  For $a\in S$ with orbit of
size $\lambda$, define the word $w_a=u_{\lambda-1}\cdots u_0$ where
\[u_j = f \text{ if } \pi^j(a)\in S_f, \text{ and } u_j = v \text{ if
  } \pi^j(a)\in S_v.\] Then $\overline{w}_a$ depends only on the orbit
of $a$.  This gives a well-defined map from orbits of $\pi$ to cyclic
words, i.e., elements of $\overline\WW$.  Taking the union over
orbits, we can associate to $(S=S_f\cup S_v,\pi)$ a multiset of cyclic
words.  If $S$ and $S'$ are isomorphic, then they yield the same
multiset.

Conversely, given a multiset of cyclic words, let $S$ be the set of
all words representing them (repeated to account for multiplicities),
let $S_f$ be the subset of those words ending with $f$, let $S_v$ be
the subset of those words ending with $v$, and let $\pi$ be defined by
the action of $1\in\Z$ as above.

The data $(S=S_f\cup S_v,\pi)$ is
\emph{admissible} if the associated words $w_a$ for $a\in S$ are
primitive.  

\subsection{Cyclic words to $BT_1$ modules}\label{ss:words-to-BT1s}
Following Kraft \cite{Kraft75}, we attach a $BT_1$ module to a
multiset of primitive cyclic words.  This defines the left vertical arrow
of Diagram~(3.1).

\subsubsection{Construction}
Suppose that $w\in\WW'$ is a primitive word, say
$w=u_{\lambda-1}\cdots u_0$ with $u_j\in\{f,v\}$.  Let $M(w)$ be the
$k$-vector space with basis $e_j$ with $j\in\Z/\lambda\Z$ and define a
$p$-linear map $F:M(w)\to M(w)$ and a $p^{-1}$-linear map
$V:M(w)\to M(w)$ by setting
\[F(e_{j})=\begin{cases}
    e_{{j+1}}&\text{if $u_j=f$},\\
    0&\text{if $u_j=v$,}
  \end{cases}
\qquad\text{and}\qquad V(e_{{j+1}})=
  \begin{cases}
    e_{{j}}&\text{if $u_j=v$},\\
    0&\text{if $u_j=f$.}  \end{cases}\]
(Note that $F(e_j)\neq0$ exactly when the last letter of the
$j$-th rotation of $w$ is $f$,
and $V(e_{j+1})\neq0$ exactly when the last letter of the $j$-th
rotation of $w$ is $v$.)
This construction yields a $BT_1$ module which up to
isomorphism only depends on the primitive cyclic word $\overline{w}$
associated to $w$.

Kraft proves that $M(w)$ is indecomposable and that every
indecomposable $BT_1$ module is isomorphic to one of the form $M(w)$
for a unique primitive cyclic word $\overline{w}$.  Thus every $BT_1$
module $M$ is isomorphic to a direct sum $\oplus M(w_i)$ where
$\overline{w}_i$ runs through a uniquely determined multiset of
primitive cyclic words.

If $w$ is a word that is not necessarily primitive, the formulas above
define a $BT_1$ module.  If $w=(w')^e$, Kraft also proves that
$M(w)\cong M(w')^e$.

It is clear that $M(f)=M(\Z/p\Z)$, $M(v)=M(\mu_p)$, and $M(fv)$ is the
Dieudonn\'e module of the kernel of $p$ on a supersingular elliptic
curve.  More generally, if $w$ has length $>1$ and is primitive, then
$M(w)$ is the Dieudonn\'e module of a unipotent, connected $BT_1$
group scheme.

\subsubsection{Generators and relations}
Let $w$ be a primitive word with associated $BT_1$ module $M(w)$.  It
will be convenient to have a presentation of $M(w)$ by generators and
relations.  Clearly, $M(f) =\D_k/(F-1,V)$ and 
$M(v) =\D_k/(F,V-1)$.

Now suppose $w$ has length $>1$.  Then, after rotating $w$ if
necessary, we may assume its last letter is $f$ and its first
letter is $v$.  (Both letters appear because $w$ is primitive, so in
particular is not $f^m$ nor $v^n$.)  We then write $w$ in exponential
notation as 
\begin{equation*}
w=v^{n_r}f^{m_r}\cdots v^{n_1}f^{m_1},
\end{equation*}
for some positive integers $r,m_1,\dots,m_{r},n_1,\dots,n_r$.

\begin{lemma} \label{Lrelations}
  The $BT_1$ module $M(w)$ admits generators $E_i$ for $i\in\Z/r\Z$
  with relations $F^{m_i}E_{i-1} = V^{n_i}E_i$ for $i=1,\dots,r$.
\end{lemma}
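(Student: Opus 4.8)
The plan is to realize the generators $E_i$ as explicit basis vectors of $M(w)$ sitting at the block boundaries of $w$, to verify the stated relations by a direct computation with the defining formulas for $F$ and $V$, and then to upgrade ``the relations hold'' to ``the relations are defining'' by a dimension count. Throughout I use the decomposition of $w$ into blocks: reading $w=v^{n_r}f^{m_r}\cdots v^{n_1}f^{m_1}$ from the right, the indices $j\in\Z/\lambda\Z$ (where $\lambda=\sum_i(m_i+n_i)$) split into consecutive $f$-blocks and $v$-blocks, and I write $b_i=\sum_{t<i}(m_t+n_t)$ for the bottom index of the $i$-th $f$-block, so that $b_{r+1}=\lambda\equiv b_1=0$.

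First I would set $E_i:=e_{b_{i+1}}$ for $i\in\Z/r\Z$, so that $E_{i-1}=e_{b_i}$ is the bottom vector of the $i$-th $f$-block and the indexing is cyclic. Climbing the $i$-th $f$-block by $F$ (all its letters are $f$) gives $F^{m_i}E_{i-1}=F^{m_i}e_{b_i}=e_{b_i+m_i}$, the bottom of the $i$-th $v$-block; descending the $i$-th $v$-block by $V$ gives $V^{n_i}E_i=V^{n_i}e_{b_{i+1}}=e_{b_{i+1}-n_i}=e_{b_i+m_i}$. These agree, which is precisely the relation $F^{m_i}E_{i-1}=V^{n_i}E_i$ (the case $i=r$ uses the cyclic wrap-around and the fact that $u_{\lambda-1}=v$ since $w$ begins with $v^{n_r}$). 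The same two computations show that the $E_i$ generate $M(w)$ as a $\D_k$-module: every basis vector in an $f$-block is $F^aE_{i-1}$ for a suitable $a<m_i$, and every basis vector in a $v$-block is $V^bE_i$ for a suitable $b\le n_i$. Hence the relations hold and the $E_i$ generate.

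It remains to see that these are \emph{all} the relations. Let $N$ be the $\D_k$-module presented by generators $E_i$, $i\in\Z/r\Z$, subject to $F^{m_i}E_{i-1}=V^{n_i}E_i$ for $i=1,\dots,r$; by the previous paragraph there is a surjection $N\onto M(w)$, so it suffices to prove $\dim_k N\le\lambda=\dim_k M(w)$. The essential structural input is that $FV=VF=0$ in $\D_k$, since $FV=VF=p$ and $p=0$ in $\D_k$. Consequently the only nonzero monomials in $F,V$ are the pure powers $F^a$ $(a\ge0)$ and $V^b$ $(b\ge1)$, so $N$ is spanned over $k$ by the ``arms'' $F^aE_i$ and $V^bE_i$. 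Because all $m_i,n_i\ge1$, applying $F$ to the relation yields $F^{m_i+1}E_{i-1}=FV^{n_i}E_i=0$ and applying $V$ yields $V^{n_i+1}E_i=VF^{m_i}E_{i-1}=0$; thus the $F$-arm of each $E_j$ is capped at $F^{m_{j+1}}$ and its $V$-arm at $V^{n_j}$. This leaves a spanning set of size $\sum_j\big((m_{j+1}+1)+n_j\big)=\lambda+r$, and the $r$ relations identify the top of the $F$-arm of $E_{i-1}$ with the top of the $V$-arm of $E_i$, removing $r$ elements. Hence $\dim_k N\le\lambda$, the surjection is an isomorphism, and the presentation is the one claimed.

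I expect the dimension count to be the only real obstacle: verifying the relations and the generation is a mechanical check with the $F$/$V$ formulas, but showing the listed relations are defining rather than merely valid rests on the identity $FV=VF=0$ together with careful bookkeeping that the $r$ relations cut the naive count $\lambda+r$ down to $\lambda$. I would take care to confirm that each relation removes a \emph{distinct} spanning element --- each top-$F$-arm vector and each top-$V$-arm vector occurs in exactly one relation --- so that no identification is wasted and the bound $\dim_k N\le\lambda$ is correct.
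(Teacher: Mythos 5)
Your construction of the generators is the same as the paper's: the paper sets $E_i=e_{I(i)}$ with $I(i)=\sum_{j=1}^i(m_j+n_j)$, which is exactly your $e_{b_{i+1}}$, and verifies generation and the relations by the same climb-the-$f$-block / descend-the-$v$-block computation. Where you go beyond the paper is the final dimension count: the paper's proof stops after checking that the $E_i$ generate and that the relations hold, and does not argue that the listed relations are \emph{defining}, even though the surrounding text announces a ``presentation.'' Your argument for that extra step is correct: since $FV=VF=0$ in $\D_k$, the quotient $N$ of the free module by the stated relations is spanned by the pure arms $F^aE_j$ and $V^bE_j$; the derived relations $F^{m_i+1}E_{i-1}=0$ and $V^{n_i+1}E_i=0$ cap the arms, giving $\lambda+r$ spanning vectors, and the $r$ relations glue $r$ disjoint pairs (each arm-top occurs in exactly one relation), so $\dim_k N\le\lambda$ and the surjection $N\onto M(w)$ is an isomorphism. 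So your proof establishes a strictly stronger conclusion than the paper's written proof, with no gaps; the only cost is the extra bookkeeping, which you handle correctly.
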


\begin{proof}
  Indeed, for $i=0,\dots, r-1$, let $I(i)=\sum_{j=1}^i (m_j+n_j)$,
  let $I'(i)=I(i)+m_{i+1}$, and let $E_i=e_{I(i)}$.  The $E_i$
  generate $M(w)$ as a $\D_k$-module because
\[\text{if }I(i)\le j\le I'(i),\text{ then }e_j=F^{j-I(i)} E_i,\]
\[\text{and if }I'(i)\le j\le I(i+1),\text{ then }e_j=V^{j-I'(i)} E_{i+1},\]
and there are relations $F^{m_i}E_{i-1}=e_{I'(i-1)}=V^{n_i}E_i$.
  \end{proof}

The following diagram illustrates this presentation of $M(w)$:
  \[\xymatrix{
      E_{r-1}=e_{I(r-1)}\ar@{|->}[dr]_{F^{m_r}}&&
      E_0=e_{I(0)}\ar@{|->}[dl]^{V^{n_r}}\ar@{|->}[dr]_{F^{m_1}}&&
      E_1=e_{I(1)}\ar@{|->}[dl]^{V^{n_1}}\ar@{|->}[dr]_{F^{m_2}}&&
      \cdots\ar@{|->}[dl]^{V^{n_2}}\\
&e_{I'(r-1)}&&e_{I'(0)}&&e_{I'(1)}}\]

\subsection{$BT_1$ modules to canonical types}\label{ss:BT1s-to-types}
Following Oort \cite{Oort01}, we explain how to
describe the isomorphism class of a $BT_1$ module in terms of certain
combinatorial data.  This defines the top horizontal arrow in
Diagram~(3.1). Readers are invited to work through the example in
Section~\ref{sss:example} while reading this section.

Warning: Many of our formulas differ from those in \cite{Oort01} for
two reasons: first, we use the contravariant Dieudonn\'e theory,
whereas Oort uses the covariant theory; second, Oort studies a
filtration defined by $F^{-1}$ and $V$, whereas we use $F$ and
$V^{-1}$.  The two approaches are equivalent (and exchanged under
duality), but the latter is more convenient for studying
Fermat curves.

\subsubsection{The canonical filtration}
Recall that $\WW$ denotes the monoid of words on 
$\{f,v\}$.  Let $M$ be a $BT_1$ module, and define a left action of
$\WW$ on the set of $k$-subspaces of $M$ by requiring that
\[fN :=F(N) \text{ and } vN:=V^{-1}(N).\]
In other words, $f$ sends a subspace $N$ to its image under $F$ and
$v$ sends $N$ to its inverse image under $V$.  Note that if
$N_1\subset N_2$, then $fN_1\subset fN_2$ and $vN_1\subset vN_2$.
If $N$ is a $\D_k$-module, so are $fN$ and $vN$, and $fN\subset
N\subset vN$.  Also note that $fM = \im F =\Ker V = v0$. 

Let $M$ be a $BT_1$ module.  An \emph{admissible filtration} on $M$ is
a filtration by $\D_k$-modules
\begin{equation} \label{Ecoarse}
0=M_0\subsetneq M_1\subsetneq\cdots\subsetneq M_s=M,
\end{equation}
such that for all $i$, there exist indices $\phi(i)$ and $\nu(i)$
such that $fM_i=M_{\phi(i)}$ and $vM_i=M_{\nu(i)}$.  

\begin{defn} \label{Dcoarse}
The \emph{canonical filtration} on $M$ is the coarsest admissible
  filtration on $M$.  If Equation~\eqref{Ecoarse} is the
  canonical filtration, the \emph{blocks} of $M$ are $B_i=M_{i+1}/M_i$
  for $0\le i\le s-1$.
\end{defn}




The canonical filtration of $M$ is constructed by enumerating all
subspaces of $M$ of the form $wM$ and indexing them in order of
containment.  Define $s$ to be the number of steps in the filtration
and $r$ to be the integer such that $M_r=fM=v0$.  Define functions
\[\phi:\{0,\dots,s\}\to\{0,\dots,r\},\quad
  \nu:\{0,\dots,s\}\to\{r,\dots,s\},\quad \rho:\{0,\dots,s\}\to\Z\]
by $fM_i=M_{\phi(i)}$, $vM_i=M_{\nu(i)}$, and $\rho(i)=\dim_k M_i$. 
This data has the following properties.

\begin{prop-def} \label{Dcanonicaltype}
  The data $(r,s,\phi,\nu,\rho)$ associated to the canonical
  filtration of $M$ is a \emph{canonical type}, i.e., $s>0$,
  $0\le r\le s$, and the functions $\phi$, $\nu$, and $\rho$ have the
  following properties:
\begin{enumerate} 
\item $\phi$ and $\nu$ are monotone nondecreasing and surjective;
\item $\rho$ is strictly increasing with $\rho(0)=0$;
\item\label{eq:can1} $\nu(i+1)>\nu(i)$ if and only if $\phi(i+1)=\phi(i)$;
\item if the equivalent conditions in \eqref{eq:can1} are true, then
$\rho(i+1)-\rho(i)=\rho(\nu(i)+1)-\rho(\nu(i))$,
while if not, then  $\rho(i+1)-\rho(i)=\rho(\phi(i)+1)-\rho(\phi(i))$;
\item and every integer in $\{1,\dots,s\}$ can be obtained by repeatedly
  applying $\phi$ and $\nu$ to $s$.
\end{enumerate}
\end{prop-def}


If the data $(r,s,\phi,\nu,\rho)$ comes from the canonical filtration of a
$BT_1$ module, then it is clear from the definitions that 
$0\le r\le s$, with $s>0$, and that properties (1), (2), and (5) hold.  Oort
proves that properties (3) and (4) hold in \cite[\S2]{Oort01}.

The properties imply that $\nu(i+1)-\nu(i)$ and $\phi(i+1)-\phi(i)$
are either 0 or 1, that exactly one of them is 1, and that
$\nu(i)+\phi(i)=r+i$.  The next lemma will be used in later sections.

\begin{lemma} \label{rem:subquotients} \cite[Lemma~2.4]{Oort01} 
Let \eqref{Ecoarse} denote the canonical filtration of $M$ and let 
$B_i=M_{i+1}/M_i$ for $0\le i\le s-1$.  If $\phi(i+1)>\phi(i)$ then
$F$ induces a $p$-linear isomorphism $B_i\isoto B_{\phi(i)}$, and if
$\nu(i+1)>\nu(i)$, then $V^{-1}$ induces a $p$-linear isomorphism
$B_i\isoto B_{\nu(i)}$.  
\end{lemma}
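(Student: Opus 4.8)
The plan is to exploit the two structural facts recorded just before the statement: that $\phi(i+1)-\phi(i)$ and $\nu(i+1)-\nu(i)$ each lie in $\{0,1\}$ with exactly one of them equal to $1$, together with the defining relations $F(M_i)=M_{\phi(i)}$ and $V^{-1}(M_i)=M_{\nu(i)}$. In the hypothesis $\phi(i+1)>\phi(i)$ the jump is necessarily $1$, so $F$ carries $M_i$ onto $M_{\phi(i)}$ and $M_{i+1}$ onto $M_{\phi(i)+1}$; dually, when $\nu(i+1)>\nu(i)$ the operator $V$ carries the preimages $M_{\nu(i)}=V^{-1}(M_i)$ and $M_{\nu(i)+1}=M_{\nu(i+1)}=V^{-1}(M_{i+1})$ into $M_i$ and $M_{i+1}$. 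Thus $F$ (resp.\ $V$) induces a map on the relevant blocks, and the task reduces to checking bijectivity. In both cases I would rely on the elementary fact that a $\sigma$-semilinear map of finite-dimensional $k$-vector spaces obeys rank--nullity, since $\sigma$ is an automorphism of the perfect field $k$; hence a surjective or injective such map between spaces of equal dimension is an isomorphism.

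For the $F$-case I would first note that, since $F(M_{i+1})=M_{\phi(i+1)}=M_{\phi(i)+1}$ and $F(M_i)=M_{\phi(i)}$, the operator $F$ induces a surjection $\bar F\colon B_i=M_{i+1}/M_i\onto M_{\phi(i)+1}/M_{\phi(i)}=B_{\phi(i)}$. To promote this to an isomorphism I only need the two dimensions to agree. For this I would invoke Proposition-Definition~\ref{Dcanonicaltype}: by property~\eqref{eq:can1} the hypothesis $\phi(i+1)>\phi(i)$ is equivalent to $\nu(i+1)=\nu(i)$, so the second alternative of property~(4) applies and gives $\rho(i+1)-\rho(i)=\rho(\phi(i)+1)-\rho(\phi(i))$, that is $\dim_k B_i=\dim_k B_{\phi(i)}$. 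A surjective $\sigma$-semilinear map between equidimensional spaces is bijective, so $\bar F$ is the asserted $p$-linear isomorphism.

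For the $V^{-1}$-case I would argue through $V$ and then invert. From $\nu(i+1)=\nu(i)+1$ and $M_{\nu(i)+1}=M_{\nu(i+1)}=V^{-1}(M_{i+1})$, the map $V$ sends $M_{\nu(i)+1}$ into $M_{i+1}$ and $M_{\nu(i)}$ into $M_i$, hence induces a $p^{-1}$-linear map $\bar V\colon B_{\nu(i)}\to B_i$. Injectivity is immediate, and is the cleanest point of the argument: if $m\in M_{\nu(i)+1}$ satisfies $V(m)\in M_i$, then $m\in V^{-1}(M_i)=M_{\nu(i)}$, so its class in $B_{\nu(i)}$ vanishes. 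Property~\eqref{eq:can1} shows $\nu(i+1)>\nu(i)$ is equivalent to $\phi(i+1)=\phi(i)$, whence the first alternative of property~(4) gives $\dim_k B_{\nu(i)}=\dim_k B_i$; the injective $\bar V$ is therefore an isomorphism, and its inverse is the desired $p$-linear isomorphism $V^{-1}\colon B_i\isoto B_{\nu(i)}$.

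The bookkeeping (well-definedness of the two block maps and the dimension count) is routine. The main conceptual obstacle is the $V^{-1}$-case: because $V^{-1}$ is a priori only an operation on subspaces rather than a map on elements, one must realize the block isomorphism through the genuine operator $V$ and then invert it, taking care that the inverse of a $p^{-1}$-linear isomorphism is $p$-linear. I would also be slightly careful to record the semilinear rank--nullity fact explicitly, since it is precisely what upgrades one-sided surjectivity or injectivity to a full isomorphism in each case.
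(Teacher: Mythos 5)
Your argument is correct. Note first that the paper gives no proof of this lemma at all: it is quoted as \cite[Lemma~2.4]{Oort01}, so there is no in-paper proof to compare against. Measured against the logical framework the paper sets up, your derivation is sound: the observation that $\phi(i+1)>\phi(i)$ forces $\phi(i+1)=\phi(i)+1$ (from the remark following Proposition-Definition~\ref{Dcanonicaltype}) makes $\bar F\colon B_i\to B_{\phi(i)}$ a well-defined surjection, property~\eqref{eq:can1} together with property~(4) supplies the dimension equality, and semilinear rank--nullity (valid because $\sigma$ is bijective on the perfect field $k$) upgrades this to an isomorphism; your treatment of the $V$-case --- realizing the map through the honest operator $V$, proving injectivity from $V^{-1}(M_i)=M_{\nu(i)}$, and then inverting --- is exactly the right way to handle the fact that $V^{-1}$ acts on subspaces rather than elements. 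The one caveat worth recording is a dependency issue rather than a gap: in Oort's own development the dimension equalities of property~(4) are \emph{deduced from} the block maps, via the identity $\ker F=\operatorname{im}V$ (which gives $\dim B_i=[\rho(\phi(i+1))-\rho(\phi(i))]+[\rho(\nu(i+1))-\rho(\nu(i))]$ by matching $\ker\bar F_i$ with $\operatorname{im}\bar V_i$), so a fully self-contained proof would run in the opposite direction from yours. Since this paper explicitly takes properties (3) and (4) as established before stating the lemma, your route is legitimate here, but you should be aware that it would be circular in a from-scratch treatment, and that the $BT_1$ hypothesis $\ker F=\operatorname{im}V$ --- which your write-up never invokes --- is where the real content lies.
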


The key assertion is that the canonical type of $M$ determines $M$ up
to isomorphism:
\begin{propss}\label{prop:module->CT}
  If the canonical types of two $BT_1$ modules $M, M'$ are equal, then
  $M\cong M'$.
\end{propss}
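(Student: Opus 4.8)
The plan is to reconstruct $M$ from its canonical type $(r,s,\phi,\nu,\rho)$ in a way that depends only on the combinatorial data, so that two modules with the same type are built from isomorphic pieces glued in the same pattern. The idea is to look at the blocks $B_i=M_{i+1}/M_i$ for $0\le i\le s-1$ of the canonical filtration and track how $F$ and $V^{-1}$ move between them. By Lemma~\ref{rem:subquotients}, whenever $\phi(i+1)>\phi(i)$ the map $F$ induces a $p$-linear isomorphism $B_i\isoto B_{\phi(i)}$, and whenever $\nu(i+1)>\nu(i)$ the map $V^{-1}$ induces a $p$-linear isomorphism $B_i\isoto B_{\nu(i)}$. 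Since property \eqref{eq:can1} tells us that for each $i$ exactly one of $\phi(i+1)>\phi(i)$ or $\nu(i+1)>\nu(i)$ holds, each block $B_i$ maps isomorphically (via exactly one of $F$ or $V^{-1}$) to another block $B_{j}$ determined purely by $\phi$ and $\nu$. First I would verify that the dimensions $\dim_k B_i=\rho(i+1)-\rho(i)$ are the same for $M$ and $M'$, which is immediate since $\rho$ is part of the type.

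Next I would organize the blocks into orbits under the ``transport'' map $i\mapsto j$ just described, where each step is labeled $f$ or $v$ according to whether it came from $F$ or $V^{-1}$. Property (5)—that every index in $\{1,\dots,s\}$ is reached from $s$ by applying $\phi$ and $\nu$—together with the surjectivity in (1) ensures this dynamical system is well-behaved and connected in the expected way; the identity $\nu(i)+\phi(i)=r+i$ and the distinguished index $r$ with $M_r=fM=v0$ pin down where the $F$-part and $V^{-1}$-part of the filtration meet. Within each orbit I would select a basis of one block and propagate it around the orbit using the isomorphisms from Lemma~\ref{rem:subquotients}, lifting block-level basis vectors to actual elements of $M$. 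The compatibility of $F$ and $V^{-1}$ on the blocks, combined with the relation $FV=VF=0$ on a $BT_1$ module and the dimension constraint in property (4), should force the lifts to assemble into a basis of all of $M$ on which $F$ and $V$ act by the combinatorial recipe read off from the orbit structure.

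Concretely, I expect this to realize $M$ as a direct sum of the standard modules $M(w)$ of Section~\ref{ss:words-to-BT1s}, one primitive cyclic word per orbit, with the word recording the sequence of $f$'s and $v$'s encountered in traversing the orbit. Since $M$ and $M'$ have the same type, they produce the same multiset of orbits and hence the same multiset of words, so $M\cong\bigoplus M(w_i)\cong M'$ by the uniqueness in Kraft's classification. The main obstacle I anticipate is the lifting step: the isomorphisms in Lemma~\ref{rem:subquotients} are only defined at the level of the subquotients $B_i$, so one must choose compatible lifts to $M$ itself and check that the choices can be made consistently around each orbit without monodromy obstruction, using that the canonical filtration is the \emph{coarsest} admissible one (Definition~\ref{Dcoarse}) to rule out spurious extra relations. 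Establishing that the canonical type genuinely recovers the full $\D_k$-module structure—not merely the associated graded—is where the real work lies, and I would lean on the coarseness of the filtration together with the numerical constraints in Proposition-Definition~\ref{Dcanonicaltype} to close this gap.
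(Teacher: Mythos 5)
Your strategy is the natural one and fits the architecture of Diagram~(3.1), but note first that the paper itself does not prove Proposition~\ref{prop:module->CT}: it cites Moonen \cite{Moonen01} (and observes that Oort \cite{Oort01} proves the polarized analogue), precisely because the step you flag as ``where the real work lies'' is the entire mathematical content of the statement. The canonical type trivially determines the associated graded $\bigoplus_i B_i$ together with the permutation $\Pi$ and the semilinear isomorphisms of Lemma~\ref{rem:subquotients}; what it does not obviously determine is the extension data, i.e., the $\D_k$-module $M$ itself. Your plan to lift a basis of one block and propagate it around each orbit meets two distinct difficulties. The first --- closing up around a cycle --- is in fact harmless over algebraically closed $k$: the composite of the maps around an orbit of length $\lambda$ is a $p^\lambda$-linear automorphism of $B_i$, and by Lang's theorem such an automorphism admits a basis of fixed vectors, so there is no monodromy obstruction. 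The second is the real one: a lift $\tilde e\in M_{i+1}$ of $e\in B_i$ only satisfies $F\tilde e\equiv(\text{prescribed image})\pmod{M_{\phi(i)}}$, and similarly for $V^{-1}$, so one must correct all the lifts inductively so that these lower-order terms vanish simultaneously. Neither the coarseness of the canonical filtration (Definition~\ref{Dcoarse}) nor the numerical constraints in Proposition-Definition~\ref{Dcanonicaltype} supplies this correction; it is exactly the content of Kraft's decomposition theorem, and your sketch leaves it as an acknowledged gap rather than closing it.

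A cleaner route, using only tools already in the paper, is to take Kraft's classification \cite{Kraft75} as a black box: $M\cong\bigoplus M(\overline w_i)^{m_i}$ for a uniquely determined multiset of primitive cyclic words. Section~\ref{ss:words-to-canonical} computes the canonical type of such a sum purely from the multiset, and Section~\ref{ss:types-to-perms} constructs from a canonical type an admissible permutation, hence a multiset of primitive cyclic words; verifying that these two constructions are mutually inverse shows that the canonical type determines the Kraft multiset, and uniqueness in Kraft's theorem then yields $M\cong M'$. This sidesteps the lifting problem entirely, because the existence of the decomposition is borrowed from Kraft rather than rebuilt from the canonical filtration. As written, your argument both invokes Kraft's uniqueness at the end and would need to reprove Kraft's existence in the middle, and it is the latter that is missing.
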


Oort proves a related result \cite[Thm.~9.4]{Oort01} involving
quasi-polarizations (pairings) which is more involved and only applies
to self-dual $BT_1$ modules.  Moonen proves the result stated here
\cite[\S4]{Moonen01} in the more general context where the module $M$
also has endomorphisms by a semi-simple $\Fp$-algebra $D$; taking
$D=\Fp$ yields Proposition~\ref{prop:module->CT}.

\begin{remss}
 Let $\mu:\{0,\dots,s-1\}\to\Z$ be defined by $\mu(i)=\rho(i+1)-\rho(i)$.
  Equivalently, $\mu(i)=\dim_k(B_i)$.  Property (2) says $\mu$ takes
  positive values, and property (4) says if $\nu(i+1)>\nu(i)$, then
  $\mu(i)=\mu(\nu(i))$ and if $\phi(i+1)>\phi(i)$, then
  $\mu(i)=\mu(\phi(i))$.
\end{remss}

\begin{remss}
  Oort defines a canonical type to be data as above satisfying
  properties (1)--(4), i.e., he omits (5), and he states
  \cite[Remark~2.8]{Oort01} that every canonical type comes from a
  $BT_1$ module.  With this definition, it is true that every
  canonical type comes from an admissible filtration on a $BT_1$
  module, but not necessarily from the \emph{canonical} filtration.
  Here is a counterexample: Let $r=0$, $s=2$ and $\phi(i)=0$,
  $\nu(i)=i$ and $\rho(i)=i$ for $i=0,1,2$.  This data comes from a
  filtration on 
  $N =M((\mu_2)^2)$
  and it satisfies properties (1)--(4), but
  not (5).  The canonical type of $N$ 
  has $r=0$, $s=1$, $\phi(i)=0$, $\nu(i)=i$ for $i=0,1$, and
  $\rho(0)=0$, $\rho(1)=2$.
\end{remss}

\subsubsection{An example}\label{sss:example}
Let $M$ be the $k$-vector space with basis $e_1,\dots,e_7$ and action
of $F, V$ given by
\begin{center}
\begin{tabular}{| c | c | c | c | c | c | c | c | }
\hline
$e$ & $e_1$ & $e_2$ & $e_3$ & $e_4$ & $e_5$ & $e_6$ & $e_7$ \\ 
\hline
$F(e)$ & 0 & 0 & 0 & $e_1$ & $e_2$ & 0 & $e_3$ \\
\hline
$V(e)$ & 0 & 0 & 0 & $e_1$ & $e_2$ & $e_3$ & $e_6$ \\
\hline 
\end{tabular}.
\end{center}
Using $\<\dots\>$ to denote the span of a set of vectors, the
canonical filtration of $M$ is 
\begin{multline*}
M_0=0\subset M_1=\<e_1,e_2\>\subset M_2=\<e_1,e_2,e_3\>
\subset M_3=\<e_1,e_2,e_3,e_4,e_5\>\\
\subset M_4=\<e_1,e_2,e_3,e_4,e_5,e_6\>
  \subset M_5=\<e_1,e_2,e_3,e_4,e_5,e_6,e_7\>=M.
\end{multline*}
The canonical type is given by $s=5$, $r=2$, and the functions $\phi,
\nu, \rho$ below: 
\begin{center}
\begin{tabular}{| c | c | c | c | c | c | c |}
\hline
$i$ & 0 & 1 & 2 & 3 & 4 & 5 \\ \hline
$\phi(i)$ & 0 & 0 & 0 & 1 & 1 & 2 \\ \hline
$\nu(i)$  & 2 & 3 & 4 & 4& 5 & 5 \\ \hline
$\rho(i)$ & 0 & 2 & 3 & 5 & 6 & 7 \\
  \hline
\end{tabular}.
\end{center}

\subsection{Canonical types to permutations}\label{ss:types-to-perms}

Following \cite[Section 2]{Oort01}, we explain how to use
a canonical type to define a partitioned set $S=S_f\cup S_v$ with
permutation $\pi:S\to S$, thus defining the right vertical arrow
of Diagram~(3.1).  These results cast considerable light
on the structure of $BT_1$ modules, but they will not be used explicitly
in the rest of the paper.

Let $(r,s,\phi,\nu,\rho)$ be as in
Definition~\ref{Dcanonicaltype}, and let $\Gamma=\{0,\dots,s-1\}$.
Define $\Pi:\Gamma\to\Gamma$ by:
\[\Pi(i)=\begin{cases}
    \phi(i)&\text{if $\phi(i+1)>\phi(i)$,}\\
    \nu(i)&\text{if $\nu(i+1)>\nu(i)$.}
  \end{cases}\]
Property~(3) of Definition~\ref{Dcanonicaltype} shows that $\Pi$ is
well defined.  Property~(1)
implies that $\Pi$ is injective and thus bijective.  By Property~(4), 
$\mu(i)=\rho(i+1)-\rho(i)$ is constant on the orbits of $\Pi$.

We partition $\Gamma$ as a disjoint union $\Gamma_f\cup\Gamma_v$ where
$i\in\Gamma_f$ if and only if $\pi(i)=\phi(i)$.  Equivalently,
\[\Gamma_f=\{i\in\Gamma\mid \phi(i+1)>\phi(i)\}
  \quad\text{and}\quad
  \Gamma_v=\{i\in\Gamma\mid\nu(i+1)>\nu(i)\}.\]
Then $(r,s,\phi,\nu,\rho)$ is
determined by the data $\Gamma=\Gamma_f\cup\Gamma_v$,
$\Pi:\Gamma\to\Gamma$, and $\mu:\Gamma\to\Z$.

\subsubsection{Example \ref{sss:example} continued}
In this case, the permutation $\Pi$ of $\Gamma=\{0,1,2,3,4\}$ is
$(0,2)(1,3,4)$, the partition is given by $\Gamma_f=\{2,4\}$ and
$\Gamma_v=\{0,1,3\}$, and the associated words are
\[w_0=fv, \ w_1=fvv, \ w_2=vf, \ w_3=vfv, \ \text{and }w_4=vvf.\] Note
that $\mu(0)=\mu(2)=2$ and $\mu(1)=\mu(3)=\mu(4)$, so $\mu$ is
constant on the orbits of $\Pi$.
\medskip

To complete the definition of the right vertical arrow of
Diagram~(3.1), we use $\mu$ as a set of ``multiplicities'' to expand
$\Gamma$ into $S$.  More precisely, define
\[S:=\left\{\left.e_{i,j}\,\right|\,i\in\Gamma, 1\le j\le \mu(i)\right\}\]
with partition
\[S_f=\left\{\left.e_{i,j}\in S\,\right|\, i\in\Gamma_f\right\}
  \quad\text{and}\quad
S_v=\left\{\left.e_{i,j}\in S\,\right|\, i\in\Gamma_v\right\}\]
and permutation $\pi:S\to S\quad\text{with}\quad \pi(e_{i,j}):=e_{\Pi(i),j}$.

\begin{lemma}
The data $(S=S_f\cup S_v,\pi)$ is an \emph{admissible}
permutation. 
\end{lemma}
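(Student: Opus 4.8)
The plan is to unwind the definition of admissibility from Section~\ref{ss:words-perms}: the data $(S=S_f\cup S_v,\pi)$ is admissible precisely when, for every $a\in S$, the associated word $w_a=u_{\lambda-1}\cdots u_0$ (with $\lambda$ the $\pi$-orbit size of $a$, and $u_j=f$ or $v$ according as $\pi^j(a)\in S_f$ or $S_v$) is \emph{primitive}. Since $\pi(e_{i,j})=e_{\Pi(i),j}$ keeps the second coordinate $j$ fixed, the $\pi$-orbit of $e_{i,j}$ is determined entirely by the $\Pi$-orbit of $i$ in $\Gamma$, and the letters $u_j$ depend only on whether the $\Pi$-iterates of $i$ lie in $\Gamma_f$ or $\Gamma_v$. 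Thus the word attached to $e_{i,j}$ equals the word $w_i$ attached to $i\in\Gamma$ under the $(\Gamma=\Gamma_f\cup\Gamma_v,\Pi)$ data, independent of $j$. So the whole statement reduces to showing that each word $w_i$ arising from the canonical type is primitive.

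First I would record the combinatorial facts about $\Pi$ and the partition that I am allowed to assume. The excerpt already establishes that $\Pi:\Gamma\to\Gamma$ is a well-defined bijection, that $\Gamma=\Gamma_f\sqcup\Gamma_v$ with $i\in\Gamma_f\iff\phi(i+1)>\phi(i)$ and $i\in\Gamma_v\iff\nu(i+1)>\nu(i)$, and (via Property~(4) / the $\mu$ remark) that $\mu$ is constant on $\Pi$-orbits so the expansion into $S$ is internally consistent. The letter-assignment $u_j$ records the $\Gamma_f/\Gamma_v$-itinerary of the forward $\Pi$-orbit of $i$, so I want to show: no such itinerary word is a nontrivial power $(w')^e$ with $e>1$. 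Equivalently, no $\Pi$-orbit has a rotational symmetry of its $f/v$-pattern by a proper divisor of its length.

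The cleanest route to primitivity is to exhibit, for each $\Pi$-orbit, an auxiliary \emph{strictly monotone} quantity that advances by a fixed positive step around the orbit and whose letter-pattern therefore cannot repeat. I would use the function $\rho$ (or the block dimensions $\mu$) together with the companion arrows $\phi,\nu$: recall from Proposition-Definition~\ref{Dcanonicaltype} that $\nu(i)+\phi(i)=r+i$ and that exactly one of $\phi,\nu$ increments at each step. The key structural point is that applying $\Pi$ corresponds to a genuine contraction/expansion in the canonical filtration — by Lemma~\ref{rem:subquotients}, the operators $F$ and $V^{-1}$ induce isomorphisms $B_i\isoto B_{\Pi(i)}$ on the relevant blocks — and the canonical filtration is by definition the \emph{coarsest} admissible one. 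I expect the main obstacle to be exactly here: turning ``coarsest admissible filtration'' into a statement that forbids orbit periodicity. The argument I would give is by contradiction: if some word $w_i=(w')^e$ with $e>1$, then the $\Pi$-orbit of $i$ carries a $\Z/(\lambda/e)$-periodicity in its $\phi/\nu$ incrementation pattern, which would let me identify filtration steps $M_i$ with $M_{i+\lambda/e}$ that are forced to coincide under repeated application of $f$ and $v$; this collapses the filtration to a strictly coarser admissible one, contradicting minimality (Definition~\ref{Dcoarse}).

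Concretely, I would carry out the steps in this order. \emph{Step 1}: reduce to primitivity of the $\Gamma$-words $w_i$, using that $\pi$ fixes the $j$-coordinate. \emph{Step 2}: set up the itinerary-word $w_i$ and translate ``$w_i$ imprimitive'' into ``$\Pi$-orbit pattern has period $d=\lambda/e<\lambda$.'' \emph{Step 3}: show a period-$d$ pattern produces a nontrivial self-identification among the subspaces $\{wM\}$ that the canonical filtration enumerates, namely that $M_i$ and the filtration step reached after one ``period'' of $\Pi$-steps must be linked by an $f/v$-word returning to itself, forcing a coarser admissible filtration and contradicting coarseness. \emph{Step 4}: conclude each $w_i$, hence each $w_a$, is primitive, so $(S=S_f\cup S_v,\pi)$ is admissible by definition. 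The delicate bookkeeping in Step~3 — matching the combinatorial period against the filtration's minimality — is where the real content lies; everything else is a definitional unwinding.
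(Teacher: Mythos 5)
Your Step~1 (reducing from $S$ to $\Gamma$ because $\pi$ preserves the second coordinate) and Step~2 (translating imprimitivity of $w_i$ into a nontrivial period in the $\Gamma_f/\Gamma_v$-itinerary of the $\Pi$-orbit) match the paper's opening moves. But Step~3, which you yourself flag as carrying all the content, does not work as described, and this is a genuine gap. You propose to ``identify filtration steps $M_i$ with $M_{i+\lambda/e}$ that are forced to coincide,'' but these are distinct steps of a strictly increasing filtration and cannot coincide; and the canonical filtration cannot be ``collapsed to a strictly coarser admissible one'' by fiat, since it is by construction the enumeration of all subspaces of the form $wM$. The mechanism you need is different: the crux is property~(5) of the canonical type (every index in $\{1,\dots,s\}$ is reachable from $s$ by applying $\phi$ and $\nu$), which is the combinatorial shadow of coarseness, and your sketch never engages with it.

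The paper's actual argument runs as follows. It first observes that since $w_{\Pi^j(i)}$ is the $j$-th rotation of $w_i$, primitivity of all the $w_i$ follows from their being pairwise \emph{distinct}. Assuming $w_i=w_j=w$ with $i<j$, one lets words act on $\{0,\dots,s\}$ via $f(t)=\phi(t)$, $v(t)=\nu(t)$; since $w$ fixes $i$, $i+1$, $j$, $j+1$ and $\phi,\nu$ are nondecreasing, one gets $w^n(s)\ge j+1$ and $w^n(0)\le i$ for all $n>0$. Then for any $i'$ with $i<i'\le j$ and any word $w'$ with $w'(s)=i'$ (which property~(5) supplies), padding $w'$ by powers of $v$ and comparing with $w^n$ in the lexicographic order forces either $i'\ge j+1$ or $i'\le i$, a contradiction. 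This monotonicity-plus-lexicographic comparison, showing that the whole interval $(i,j]$ is unreachable from $s$, is the step your proposal is missing; without it (or an equivalent precise use of property~(5)), the proof is not complete.
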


\begin{proof}
  The set of cyclic words associated to $(S=S_f\cup S_v,\pi)$ is the
  same as the set of cyclic words associated to
  $(\Gamma=\Gamma_f\cup\Gamma_v,\Pi)$.  We need to show that these
  words are primitive.  If $w=w_i$ for $i \in \Gamma$, then
  $w_{\Pi^j(i)}$ is the $j$-th cyclic rotation of $w_i$.  Thus to show
  that the $w_i$ are all primitive, it suffices to show that they are
  distinct.


  To that end, define a left action of the monoid $\WW$ on the set
  $\{0,\dots,s\}$ by requiring that $f(i)=\phi(i)$ and $v(i)=\nu(i)$.
  If $i\in\Gamma$, and if $w_i$ is the word associated to $i$, then
  $w_i$ fixes $i$ and $i+1$.  This is a manifestation in the canonical
  type of the isomorphisms from Lemma~\ref{rem:subquotients}:
\[B_i\isoto B_{\Pi(i)}\isoto B_{\Pi^2(i)}\isoto \cdots\isoto B_i.\]

Now assume that $i,j\in\Gamma$, $i<j$, and $w_i=w_j=w$.  We will
deduce a contradiction of property~(5) in
Definition~\ref{Dcanonicaltype}.  Since $\phi$ and $\nu$ are
nondecreasing, for all $n>0$ we have
\[w^n(s)\ge i+1>i\ge w^n(0)
\quad\text{and}\quad
w^n(s)\ge j+1>j\ge w^n(0),\]
so $w^n(s)\ge j+1$ and $i\ge w^n(0)$ for all $n>0$.

Choose some $i'$ with $i<i'\le j$.  By property (5) of
Definition~\ref{Dcanonicaltype}, there is a word $w'$ with $w'(s)=i'$.
Choose $n>0$ large enough that $w^n$ is at least as long as $w'$, and
then replace $w'$ with $w'v^m$ where $m$ is chosen so $w^n$ and $w'$
have the same length.  Since $v(s)=\nu(s)=s$, we still have
$w'(s)=i'$.
If $w'\ge w^n$
(in the lexicographic order from Section~\ref{ss:words-perms}), then
$i'=w'(s)\ge w^n(s)\ge j+1$, a contradiction; and if $w'<w^n$, then
$i'=w'(s)\le w^n(0)\le i$, again a contradiction.  We conclude that
there can be no $i<j$ with $w_i=w_j$, and thus $\pi$ is admissible.
\end{proof}

\begin{remss}
  A more thorough analysis along these lines shows that if $M$ is a
  $BT_1$ module, then there are finitely many primitive words $w_i$
  such that $w_i^nM\supsetneq w_i^n0$ for all $n>1$.  Enumerate these
  as $w_0,\dots,w_{s-1}$ and choose integers $n_i$ so that
  $w_i^nM=w_i^{n_i}M$ and $w_i^{n}0=w_i^{n_i}0$ for all $n\ge n_i$, and
  so that the lengths of the $w_i^{n_i}$ are all the same.  Define $\tilde
  w_i=w_i^{n_i}$.
  Reorder the $w_i$ so that
  \[ \tilde w_0< \tilde w_1<\cdots< \tilde w_{s-1}.\]
  (Numbering the $\tilde\omega_i$ from $i=0$ turns out to be most
  convenient; see the proof of Lemma~\ref{lemma:k}.)
  Let $\tilde w_{-1}M=0$.
  Then the $w_i$ are distinct, the canonical filtration of $M$ is
  \[0\subsetneq \tilde w_0M\subsetneq \cdots\subsetneq \tilde w_{s-1}M=M,\]
  and the primitive words associated to $M$ are precisely the $w_i$,
  with the multiplicity of $w_i$ being
   \[\mu(i)=\dim_k(\tilde w_{i}M/
    \tilde w_{i-1}M).\]
\end{remss}

\subsection{Words to canonical types}\label{ss:words-to-canonical}
In this section, we describe the map from
multisets of (not necessarily primitive) cyclic words to canonical
type.  This will be used in Sections~\ref{s:Homs} and
\ref{s:FEO-general}.

For $1\le i\le n$, let $\overline w_i$ be cyclic words with
multiplicities $m_i$.  Let $M(\overline w_i)$ be the Kraft module discussed in
Section~\ref{ss:words-to-BT1s}.  Our goal is to describe the canonical type of
the $BT_1$ module
\[M=\oplus_{i=1}^n M(\overline w_i)^{m_i}.\]

Let $\lambda_i$ be the length of $\overline w_i$ and choose a
representative $w_i=u_{i,\lambda_i-1}\cdots u_{i,0}$ of
$\overline w_i$ with $u_{i,j}\in\{f,v\}$.  The $k$-vector space
underlying $M$ has basis $e_{i,j,k}$ where $1\le i\le n$,
$ j\in\Z/\lambda_i\Z$, and $1\le k\le m_i$.  Its $\D_k$-module
structure is given by
\[F(e_{i,j,k})=\begin{cases}
    e_{i,{j+1},k}&\text{if $u_{i,j}=f$},\\
    0&\text{if $u_{i,j}=v$,}
  \end{cases}
\qquad\text{and}\qquad V(e_{i,{j+1},k})=
  \begin{cases}
    e_{i,j,k}&\text{if $u_{i,j}=v$},\\
    0&\text{if $u_{i,j}=f$.}  \end{cases}\]

Let $w_{i,j}=u_{i,j-1}\cdots u_{i,0}u_{i,\lambda_i}\cdots u_{i,j}$ be
the $j$-th rotation of $w_i$.  Let
$\ell={\rm LCM}(\lambda_i)_{i=1..n}$ and set
$\tilde w_{i,j}=w_{i,j}^{\ell/\lambda_i}$.  Thus each $w_{i,j}$ has
length $\ell$ and we may compare them in lexicographic order.

Let $\Sigma$ be the multiset obtained by including each
$\tilde w_{i,j}$ with multiplicity $m_i$.  (The $\tilde w_{i,j}$ need
not be distinct, and if there are repetitions, one should add
multiplicities.)  Now relabel the distinct elements of $\Sigma$ as
$\omega_t$ for $0\le t\le s-1$ and ordered so that
$\omega_0<\omega_1<\cdots<\omega_{s-1}$.  Let $\tau$ be the function
such that $\tau(i,j)=t$ if and only if $\tilde w_{i,j}=\omega_t$.  Let
$\mu(t)$ be the multiplicity of $\omega_{t}$ in $\Sigma$.

For $1\le t\le s$, let $M_t$ be the $k$-subspace of $M$ spanned
by those $e_{i,j,k}$ with $\tau(i,j)\le t-1$.  We claim that the
canonical filtration of $M$ is
\[0=M_0\subsetneq M_1\subsetneq\cdots\subsetneq M_{s}=M.\] Indeed, it
is easy to see that if $\omega_t$ ends with $f$ and
$\omega_{t'}$ is the first rotation of $\omega_t$, then $F$
induces a $p$-linear isomorphism
$M_{t+1}/M_{t}\isoto M_{t'+1}/M_{t'}$.  On the other hand,
if $\omega_t$ ends with $v$, then $V^{-1}$ induces a $p$-linear
isomorphism $M_{t+1}/M_{t}\isoto M_{t'+1}/M_{t'}$.  Thus
the displayed filtration is an admissible filtration.  Via the action
of $\WW$ on $M$, the word $\omega_t$ induces a semi-linear
automorphism of $M_{t+1}/M_{t}$, while a power of $\omega_t$
induces the zero map of $M_{t'+1}/M_{t'}$ if $t\neq t$.
It follows that $\omega^n_{t} M=M_{t+1}$ for large enough $n$,
so this is the coarsest filtration, thus the canonical filtration.

The dimension of the block $B_t=M_{t+1}/M_t$ equals
$\mu(\omega_t)$, the multiplicity of $\omega_t$ in
$\Sigma$.

It remains to record the values of $r$ and the functions $\phi$,
$\nu$, and $\rho$ associated to $M$:  \begin{itemize}
\item $r=\#\left\{t \mid 0\le t<s,\ \omega_t
      \text{ ends with }f \right\}$;
\item $\phi(i)=\#\left\{t \mid 0\le t< i,\ \omega_t
      \text{ ends with }f \right\}$;
\item $\nu(i)=r+\#\left\{t \mid 1\le t< i,\ \omega_t
      \text{ ends with }v \right\}$; and
    \item $\rho(i)=\sum_{t=0}^{i-1}\mu(t)$.
\end{itemize}

\subsubsection{Example}
Let $\overline w_1=\overline{fv}$, $w_2=\overline{fvfv}$, and
$m_1=m_2=1$.  Taking $n_1=2$ and $n_2=1$, one finds that $T$ contains
$\omega_0=fvfv$ and $\omega_1=vfvf$, each with multiplicity 3.  The
function $\tau$ is
\[\tau(1,0)=\tau(2,0)=\tau(2,2)=0
  \quad\text{and}\quad
  \tau(1,1)=\tau(2,1)=\tau(2,1)=1,\]
and $\mu(0)=\mu(1)=3$.
Thus $s=2$, $r=1$, and the functions $\phi$, $\nu$,
and $\rho$ are given by
\begin{center}

\begin{tabular}{| c | c | c | c | }
\hline
$i$ & 0 & 1 & 2 \\ \hline
$\phi(i)$ & 0 & 0 & 1 \\ \hline
$\nu(i)$  & 1 & 2 & 2\\ \hline
$\rho(i)$ & 0 & 3 & 6 \\ \hline
\end{tabular}
\end{center}

\section{Duality and E--O structures} \label{s:refinements}

\subsection{Duality of $BT_1$ modules}\label{ss:duality}
We record how duality of $BT_1$ modules interacts with
the objects in Diagram~(3.1).  All the assertions in this section will
be left to the reader.

For a $BT_1$ module $M$, let $M^*$ is its dual.  If $N\subset M$ is a
$k$-subspace, then
\[F\left(N^\perp\right)=\left(V^{-1}N\right)^\perp
  \quad\text{and}\quad
V^{-1}\left(N^\perp\right)=\left(FN\right)^\perp.\]
Let $0=M_0\subsetneq M_1\subsetneq\cdots\subsetneq M_s=M$
be the canonical filtration of $M$; setting $M^*_i=(M_{s-i})^\perp$,
then the canonical filtration of $M^*$ is
$0=M^*_0\subsetneq M^*_1\subsetneq\cdots\subsetneq M^*_s=M^*$.
If the canonical data attached to $M$ is $(r,s,\phi,\nu,\rho)$, and
the canonical data attached to $M^*$ is
$(r^*,s^*,\phi^*,\nu^*,\rho^*)$, then $s^*=s$, $r^*=s-r$, and for
$0 \leq i \leq s$,
\begin{equation*}
  \phi^*(i)=s-\nu(s-i), \ \nu^*(i)=s-\phi(s-i),
  \text{ and } \rho^*(i)=\rho(s)-\rho(s-i).
\end{equation*}
It follows that $M$ is self-dual if and only if the associated
canonical data satisfies $s=2r$,
\begin{equation}\label{eq:dual-canonical}
  \phi(i)+\nu(s-i)=s, \text{ and } \rho(i)+\rho(s-i)=\rho(s).
\end{equation}
The relationship between the partitioned set with permutation
associated to $M$ and to $M^*$ is $S^*=S$, $S^*_f=S_v$, $S^*_v=S_f$,
and $\pi^*=\pi$.  It follows that $M$ is self-dual if and only if
there exists a bijection $\iota:S\isoto S$ which satisfies $\iota(S_f)=S_v$
and $\pi\circ\iota=\iota\circ \pi$.

If $w$ is a primitive word on $\{f,v\}$, define $w^c$ to be
the word obtained by exchanging $f$ and $v$.  This operation descends
to a well-defined involution on cyclic words and $M(w)^*\cong M(w^c)$.
It follows that $M$ is self-dual if and only if the associated
multiset of cyclic primitive words consists of self-dual words
($\overline w^c=\overline w$) and pairs of dual words
($\{\overline w,\overline w^c\}$).

\subsection{Ekedahl--Oort classification of polarized $BT_1$
  modules}\label{ss:EO-structures} 
Clearly, a polarized $BT_1$ module is self-dual.  Conversely, as we
will see below (Corollary~\ref{cor:self-dual=>polarized}), any
self-dual $BT_1$ module can be given a polarization.
In this section, we review the Ekedahl--Oort classification
\cite{Oort01} of polarized $BT_1$ modules.

\subsubsection{Elementary sequences} \label{Selemseq}
Elementary sequences are a convenient repackaging of the data of a
self-dual canonical type $(r,s,\phi,\nu,\rho)$.  Using
Equation~\eqref{eq:dual-canonical}, the restrictions of $\phi$ and
$\rho$ to $\{0,\dots,r\}$ determine the rest of the data.  An
\emph{elementary sequence} of length $g$ is a sequence
$\Psi= [\psi_1,\dots,\psi_g]$ of integers with
$\psi_{i-1}\le\psi_i\le\psi_{i-1}+1$ for $i=1,\dots,g$.
The set of elementary sequences of length $g$ has cardinality $2^g$.

Given $(r,s,\phi,\nu,\rho)$, define an elementary sequence as
follows.
Let $g=\rho(r)$.  Set $\psi_0=0$. 
For each $1\le j\le g$, let $i$ be the unique integer $0<i\le r$ such
that $\rho(i-1)<j\le\rho(i)$.  Define
\[\psi_j=
  \begin{cases}
    \psi_{j-1}&\text{if $\phi(i)=\phi(i-1)$,}\\
    \psi_{j-1}+1&\text{if $\phi(i)>\phi(i-1)$.}
  \end{cases}\]
(Put more vividly, the sequence $\psi_j$ increases for $\mu(i-1)$
steps if $\phi(i)>\phi(i-1)$ and it stays constant for $\mu(i-1)$ steps
if $\phi(i)=\phi(i-1)$.)

We leave it as an exercise for the reader to check that, given an
elementary sequence, there is a unique self-dual canonical type giving
rise to it by this construction.

Elementary sequences can be obtained directly from a self-dual $BT_1$
module as follows: The canonical filtration can be refined into a
``final filtration,'' i.e., a filtration
\[0=M_0\subsetneq M_1\subsetneq\cdots\subsetneq M_{2g}=M\] respected
by $F$ and $V^{-1}$ and such that $\dim_k(M_i)=i$.  Then $\Psi$ is
defined by $\psi_i=\dim_k(FM_i)$.


\begin{thmss}\label{thm:Oort-classification} \cite[Thm.~9.4]{Oort01}
  Every elementary sequence of length $g$ arises from a polarized
  $BT_1$ module of dimension $2g$, and two polarized $BT_1$ modules
  over $k$ with the same elementary sequences are isomorphic.  More
  precisely, there is an isomorphism of $BT_1$ modules which respects
  the alternating pairings.  \textup{(}This isomorphism is not unique
  in general.\textup{)}
\end{thmss}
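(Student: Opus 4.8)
The plan is to deduce the statement by combining three facts recorded above: the bijection between elementary sequences of length $g$ and self-dual canonical types (the exercise in Section~\ref{Selemseq}); the fact that a canonical type determines its $BT_1$ module up to isomorphism (Proposition~\ref{prop:module->CT}); and the existence and uniqueness up to isomorphism of a polarization on a self-dual $BT_1$ module (Corollary~\ref{cor:self-dual=>polarized}). For existence, I would start from an elementary sequence $\Psi$ of length $g$ and use the exercise to produce the unique self-dual canonical type $(r,s,\phi,\nu,\rho)$, with $s=2r$ and $\rho(r)=g$, giving rise to $\Psi$. Since the classification maps of Diagram~(3.1) are bijections, this canonical type is realized by a $BT_1$ module $M$; concretely $M$ is the Kraft module of the corresponding multiset of primitive cyclic words, which is stable under complementation $\overline w\mapsto\overline w^c$ precisely because the canonical data satisfies the self-duality relations~\eqref{eq:dual-canonical} (see Section~\ref{ss:duality}). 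Hence $M$ is self-dual and, by Corollary~\ref{cor:self-dual=>polarized}, carries a polarization; its elementary sequence is $\Psi$ by construction. Finally $\dim_k M=\rho(s)$, and evaluating $\rho(i)+\rho(s-i)=\rho(s)$ at $i=r$ gives $\rho(s)=2\rho(r)=2g$, as required.

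For the uniqueness of the module, let $M$ and $M'$ be polarized $BT_1$ modules with the same elementary sequence $\Psi$. Each underlying module is self-dual, so each has a well-defined self-dual canonical type, and by the exercise bijection these coincide, both being the unique self-dual canonical type attached to $\Psi$. Proposition~\ref{prop:module->CT} then produces an isomorphism $f\colon M\to M'$ of $BT_1$ modules.

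It remains to upgrade $f$ to an isomorphism respecting the pairings. Pulling the polarization of $M'$ back along $f$ equips $M$ with a second non-degenerate alternating pairing, so $M$ now carries two polarizations. The uniqueness clause of Corollary~\ref{cor:self-dual=>polarized} supplies an automorphism $\beta$ of $M$ carrying the original polarization to the pulled-back one, and then $f\circ\beta\colon M\to M'$ is the desired isomorphism of polarized $BT_1$ modules; its non-uniqueness is transparent, since it may be composed with any pairing-preserving automorphism of $M$. I expect this last step to be the main obstacle, as it rests entirely on the uniqueness half of Corollary~\ref{cor:self-dual=>polarized}, i.e.\ on the transitivity of the action of $\aut(M)$ on the set of polarizations of $M$; verifying that transitivity requires controlling how an alternating pairing distributes across the indecomposable Kraft summands $M(\overline w)$ and the dual pairs $\{M(\overline w),M(\overline w^c)\}$ identified in Section~\ref{ss:duality}.
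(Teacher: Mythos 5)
There is a genuine gap, and it is a circularity. The paper does not prove this statement at all: it is quoted directly from Oort \cite[Thm.~9.4]{Oort01}, and the paper then \emph{uses} it to deduce Corollary~\ref{cor:self-dual=>polarized}. Your argument runs in the opposite direction: both your existence step (equipping the self-dual Kraft module with a polarization) and your uniqueness step (finding an automorphism $\beta$ intertwining two polarizations on $M$) invoke Corollary~\ref{cor:self-dual=>polarized}, which in this paper has no proof independent of the very theorem you are trying to establish. So as written the proposal proves nothing new; it only shows that the theorem is equivalent, given the unpolarized classification, to the existence-and-transitivity statement of the corollary.

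That equivalence is the easy part. The hard content of Oort's theorem is precisely the two facts you defer: (i) that a self-dual $BT_1$ module admits a non-degenerate \emph{alternating} pairing --- note the warning in Section~\ref{ss:BT1} that for $p=2$ skew-symmetric does not imply alternating, so this is not automatic from self-duality; and (ii) that $\aut(M)$ acts transitively on the set of such pairings. You correctly identify (ii) as ``the main obstacle'' and note that it requires analyzing how a pairing distributes across the Kraft summands $M(\overline w)$ and the dual pairs $\{M(\overline w),M(\overline w^c)\}$, but you do not carry out that analysis, and it is not a routine verification: one must handle pairings that are not block-diagonal with respect to a chosen Kraft decomposition, and the case of repeated self-dual summands. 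Everything else in your write-up (the bijection between elementary sequences and self-dual canonical types, Proposition~\ref{prop:module->CT} giving an unpolarized isomorphism, the dimension count $\rho(s)=2\rho(r)=2g$) is fine but only reduces the theorem to (i) and (ii), which is where the actual proof --- in Oort's paper, not this one --- lives.
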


The elementary sequence attached to a $BT_1$ module is also called its
\emph{Ekedahl--Oort structure}.

\begin{corss}\label{cor:self-dual=>polarized}
  Every self-dual $BT_1$ module admits a polarization, i.e., a
  non-degenerate \emph{alternating} pairing, and this pairing is
  unique up to \textup{(}non-unique\textup{)} isomorphism.
\end{corss}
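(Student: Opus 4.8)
The plan is to deduce both existence and uniqueness directly from Oort's classification (Theorem~\ref{thm:Oort-classification}) together with the fact that a $BT_1$ module is determined by its canonical type (Proposition~\ref{prop:module->CT}); no new construction is needed. The key preliminary observation is that a self-dual $BT_1$ module carries no data beyond an elementary sequence. Indeed, by the duality computation \eqref{eq:dual-canonical}, $M$ is self-dual exactly when its canonical type $(r,s,\phi,\nu,\rho)$ satisfies $s=2r$, $\phi(i)+\nu(s-i)=s$, and $\rho(i)+\rho(s-i)=\rho(s)$; these are precisely the self-dual canonical types, which Section~\ref{ss:EO-structures} identifies bijectively with elementary sequences. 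In particular $\dim_k M=\rho(s)=2\rho(r)$ is even, say $2g$, and $M$ determines an elementary sequence $\Psi=[\psi_1,\dots,\psi_g]$.

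For existence, I would start from a self-dual $M$ with elementary sequence $\Psi$. Theorem~\ref{thm:Oort-classification} produces a polarized $BT_1$ module $(M',\langle\,,\,\rangle)$ whose Ekedahl--Oort structure is $\Psi$. A polarized module is in particular self-dual, so the canonical type of $M'$ is self-dual and is determined by $\Psi$; hence it coincides with the canonical type of $M$. By Proposition~\ref{prop:module->CT} there is an isomorphism $\alpha\colon M\isoto M'$ of $BT_1$ modules, and transporting the pairing back by $\langle x,y\rangle_M:=\langle \alpha x,\alpha y\rangle$ gives a non-degenerate alternating pairing on $M$, i.e.\ a polarization.

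For uniqueness, I would take two polarizations $\langle\,,\,\rangle_1$ and $\langle\,,\,\rangle_2$ on $M$ and view $(M,\langle\,,\,\rangle_1)$ and $(M,\langle\,,\,\rangle_2)$ as polarized $BT_1$ modules with the same underlying $M$, hence the same elementary sequence. The uniqueness clause of Theorem~\ref{thm:Oort-classification} then supplies an isomorphism $\beta\colon(M,\langle\,,\,\rangle_1)\isoto(M,\langle\,,\,\rangle_2)$ respecting the pairings; as the two modules coincide, $\beta$ is an automorphism of $M$ carrying $\langle\,,\,\rangle_1$ to $\langle\,,\,\rangle_2$, which is exactly the claimed uniqueness up to (non-unique) isomorphism.

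The only content beyond invoking the two cited results is the identification of self-dual canonical types with elementary sequences, which is immediate from \eqref{eq:dual-canonical}. I expect the genuine subtlety to be the \emph{alternating} (rather than merely symmetric or skew) requirement on the pairing. A direct construction via the Kraft decomposition would pair each dual pair $\{\overline w,\overline w^c\}$ hyperbolically but would then have to produce an alternating form on every self-dual indecomposable $M(w)$ with $\overline w=\overline w^c$, and this is precisely where the characteristic $2$ phenomenon noted in Section~\ref{ss:BT1} obstructs a naive argument. Routing the proof through Theorem~\ref{thm:Oort-classification}, whose conclusion already yields an alternating pairing, avoids this difficulty.
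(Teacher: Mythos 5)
Your proposal is correct and follows essentially the same route as the paper: both deduce existence by feeding the elementary sequence of the self-dual module into Theorem~\ref{thm:Oort-classification} and identifying the resulting polarized module with $M$ via Proposition~\ref{prop:module->CT}, and both get uniqueness from the fact that the elementary sequence is independent of the pairing together with the uniqueness clause of that theorem. You merely make explicit the step (elementary sequence $\Rightarrow$ self-dual canonical type $\Rightarrow$ isomorphism class) that the paper leaves implicit.
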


\begin{proof}
  If $M$ is a self-dual $BT_1$ module, construct its canonical type,
  and its elementary sequence $\Psi$ as in Section~\ref{Selemseq}.
  Theorem~\ref{thm:Oort-classification} furnishes a polarized $BT_1$
  module with the same underlying $BT_1$ module, and this proves the
  existence of a polarization.  For uniqueness, note that the
  construction of $\Psi$ does not depend on the
  pairing.  So, given two alternating pairings on $M$,
  Theorem~\ref{thm:Oort-classification} shows there is a (not
  necessarily unique) automorphism intertwining the
  pairings.
\end{proof}

\section{Homomorphisms}\label{s:Homs}
The Kraft description of $BT_1$ modules is well adapted to computing
homomorphisms.   We work out three important examples in this section.

\subsection{Homs from $\Z/p\Z$ or $\mu_p$}\label{ss:homs-Z/pZ}

\begin{defn}
If $G$ is a $BT_1$ group scheme, the \emph{$p$-rank} of $G$
is the largest integer $f$ such that there is an injection
$(\Z/p\Z)^f\into G$.  Alternatively, $f$ is the dimension of
the largest quotient space of $M(G)$ on which Frobenius acts
bijectively.
\end{defn}

\begin{lemma} \label{Lprank}
  If $G$ is a $BT_1$ group scheme, the $p$-rank of $G$ is equal to the
  multiplicity of the word $f$ in the multiset of primitive words
  corresponding to $M(G)$.  Similarly, the largest $f$ such that
  $\mu_p^f$ embeds in $M(G)$ is the multiplicity of the word $v$.
\end{lemma}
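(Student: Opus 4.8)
The plan is to prove Lemma~\ref{Lprank} by unwinding the definition of the $p$-rank in terms of the Frobenius action and matching it against the Kraft module structure. The definition says the $p$-rank $f$ is the dimension of the largest quotient of $M(G)$ on which $F$ acts bijectively. Since $M(G)\cong\bigoplus M(w_i)$ decomposes as a direct sum of indecomposable Kraft modules, and $F$ (and hence any quotient on which $F$ is bijective) respects this decomposition, the key reduction is to compute, for each primitive word $w$, the dimension of the largest quotient of $M(w)$ on which $F$ acts bijectively. The claim then follows by additivity over the direct sum: the total contribution is the number of indices $i$ for which $M(w_i)$ contributes a one-dimensional such quotient, which should turn out to be exactly the indices with $w_i=f$.

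First I would treat the indecomposable case $M=M(w)$ with $w$ primitive. Recall the explicit basis $e_j$, $j\in\Z/\lambda\Z$, with $F(e_j)=e_{j+1}$ if $u_j=f$ and $F(e_j)=0$ if $u_j=v$. I would identify the largest $F$-stable quotient on which $F$ is bijective. A clean way is to observe that $F$ is bijective on a quotient $M/N$ precisely when $N\supseteq\Ker F$ and $F$ is surjective mod $N$; since $\Ker F=\im V$ by the $BT_1$ property, the largest such quotient is $M/(M\cap\text{(something))}$, but it is cleaner to argue directly. For $w$ of length $>1$, every basis vector $e_j$ with $u_{j}=v$ lies in $\Ker F$, and one checks that the $F$-bijective part forces the whole word to consist only of $f$'s; since $w$ is primitive of length $>1$, it contains at least one $v$, so the largest $F$-bijective quotient is zero. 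For $w=f$ (length one, $\lambda=1$), $F(e_0)=e_0$ so $F$ is already bijective and the quotient is one-dimensional. For $w=v$, $F=0$, so the $F$-bijective quotient is zero. Thus $M(w)$ contributes $1$ to the $p$-rank exactly when $w=f$, and $0$ otherwise.

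The main obstacle, and the step needing the most care, is justifying that the largest $F$-bijective quotient of the whole module is additive over the Kraft decomposition and that it vanishes for every primitive word of length $>1$. For additivity, I would argue that the construction ``largest quotient on which $F$ is bijective'' is functorial: it is $M/M_{\mathrm{nil}}$ where $M_{\mathrm{nil}}=\bigcap_n \im(F^n)^{\perp}$-type description fails directly, so instead I would use the semisimplicity of the $F$-bijective part. Concretely, the largest $F$-bijective quotient of $M$ is $M\big/\sum_{n\ge 1}\Ker(F^n)$, and $\sum_n\Ker(F^n)$ is a $\D_k$-submodule respecting the direct sum, giving additivity. For $w$ primitive of length $>1$, I must show $\sum_n\Ker(F^n)=M(w)$, i.e.\ $F$ is nilpotent on $M(w)$; this holds because the ``$f$-runs'' in a primitive word of length $>1$ are interrupted by at least one $v$, so iterating $F$ along any $e_j$ eventually hits a letter $v$ and dies, making $F$ nilpotent.

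Finally, the statement for $\mu_p$ follows by the dual argument, or more efficiently by duality: applying the already-proven statement to $M(G)^*\cong M(G^D)$ and using $M(w)^*\cong M(w^c)$ from Section~\ref{ss:duality} exchanges the roles of $f$ and $v$ and of $\Z/p\Z$ and $\mu_p$, so the largest $f$ with $\mu_p^f\hookrightarrow M(G)$ equals the multiplicity of the word $v$. Alternatively one repeats the indecomposable computation with $V$ in place of $F$, noting $V$ is bijective on $M(v)$ and nilpotent on every primitive word $\neq v$ of length $>1$ as well as on $M(f)$.
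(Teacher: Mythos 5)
Your proof is correct and follows essentially the same route as the paper's: both reduce to the Kraft indecomposables $M(w)$ and rest on the observation that $F$ is nilpotent on $M(w)$ for every primitive $w\neq f$, so only the word $f$ contributes. The paper packages this as the vanishing of homomorphisms between $M(f)=\D_k/(F-1,V)$ and $M(w)$ read off from the presentation by generators and relations, while you phrase it via the largest quotient on which $F$ acts bijectively, namely $M/\sum_{n\ge1}\Ker(F^n)$; the underlying computation is the same.
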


\begin{proof}
  From the presentation in terms of generators and relations, we see
  that if $w$ is a primitive word other than $f$, then there is no
  non-zero homomorphism from $\Z/p\Z=M(f)$ to $M(w)$.  It follows that
  if $M(G)$ is given in the Kraft classification by a multiset of
  primitive cyclic words, the $p$-rank is the multiplicity of the word
  $f$.  The assertion for $\mu_p$ is proved analogously.
\end{proof}

\subsection{Homs from $\alpha_p$}\label{ss:homs-ap}
Let $G$ be a finite group scheme over $k$ killed by $p$.  Define the
\emph{foot} (or \emph{socle}) of $G$ to be the largest semisimple
subgroup of $G$.  The simple objects in the category of finite group
schemes over $k$ killed by $p$ are $\Z/p\Z$, $\mu_p$, and $\alpha_p$.
Thus the foot of $G$ is a direct sum of these (with multiplicity).  If
$G$ is connected and unipotent, then its foot is of the form
$\alpha_p^\ell$ for some positive integer $\ell$.
Note that $M(\alpha_p) = \D_k/(F,V)$.

Similarly, if $M$ is a $\D_k$-module of finite length, the \emph{head}
(or \emph{co-socle}) of $M$ is its largest semisimple quotient.  We
write $\HH(M)$ for the head of $M$.  If $M$ is a $\D_k$-module on
which $F$ and $V$ are nilpotent, then $\HH(M)$ is a $k$-vector space
on which $F=V=0$.  The presentation of $M(w)$ by generators and
relations makes it clear that if
$w=v^{n_\ell}f^{m_\ell}\cdots v^{n_1}f^{m_1}$, then $\HH(M(w))$ has
dimension $\ell$.  More precisely, the images of the generators
$E_0,\dots,E_{\ell-1}$ in $\HH(M(w))$ are a basis.
 
\begin{defn}
  If $G$ is a finite group scheme over $k$ killed by $p$, the
  \emph{$a$-number of $G$}, denoted $a(G)$, is the largest integer $a$
  such that there is an injection $\alpha_p^a\into G$ of group
  schemes.  If $G$ is connected and unipotent, $p^{a(G)}$ equals the
  order of the foot of $G$.  Similarly, if $M$ is a $\D_k$-module of
  finite length, the \emph{$a$-number of $M$}, denoted $a(M)$, is the
  largest integer $a$ such that there is a surjection
  $M\onto M(\alpha_p)=\D_k/(F,V)$ of $\D_k$-modules.
  \end{defn}
  
  It is clear that the $a$-number is additive in direct sums and that
  $a(M(f))=a(M(v))=0$.

\begin{lemma} \label{Lanumber}
The $a$-numbers of $BT_1$ modules have the following
  properties: 
\begin{enumerate}
  \item If $\ell$, $m_1,\dots,m_\ell$, and $n_1,\dots,n_\ell$ are
    positive integers, then $a(M(v^{n_\ell}f^{m_\ell}\cdots
    v^{n_1}f^{m_1}))=\ell$. 
\item Also $a(M(w))$ is the number of rotations of
$w$ which start with $v$ and end with $f$.
\end{enumerate}
\end{lemma}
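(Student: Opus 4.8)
The plan is to prove both parts of the lemma together by directly analyzing the $a$-number using the generators-and-relations presentation of $M(w)$, combined with the characterization of the head $\HH(M)$ that was established just before the lemma statement.

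First I would reduce to the case where $w$ has length $>1$ and contains both letters. If $w=f^m$ or $w=v^n$, the module $M(w)$ is a sum of copies of $M(f)$ or $M(v)$ (by the primitivity retraction and the fact $M((w')^e)\cong M(w')^e$), and since $a(M(f))=a(M(v))=0$ was noted to hold, the $a$-number is $0$; meanwhile no rotation of $f^m$ starts with $v$ and ends with $f$, and likewise for $v^n$, so part (2) also gives $0$. This handles the degenerate cases and shows they are consistent.

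For the main case, I would prove part (1) first. After rotating, write $w=v^{n_\ell}f^{m_\ell}\cdots v^{n_1}f^{m_1}$ as in Section~\ref{ss:words-to-BT1s}. The key observation is that $a(M)$, defined as the largest $a$ with a surjection $M\onto\D_k/(F,V)=M(\alpha_p)$, equals $\dim_k\HH(M)$: any surjection onto $M(\alpha_p)$ factors through the largest semisimple quotient, and on a connected unipotent module $F$ and $V$ are nilpotent, so every simple quotient is $M(\alpha_p)$, whence $\HH(M)\cong M(\alpha_p)^{\dim\HH(M)}$. By the discussion immediately preceding the lemma, the presentation of $M(w)$ by generators $E_0,\dots,E_{\ell-1}$ with relations $F^{m_i}E_{i-1}=V^{n_i}E_i$ shows that $\HH(M(w))$ has dimension exactly $\ell$, the images of the $E_i$ forming a basis. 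This gives $a(M(w))=\ell$, proving part (1).

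Finally I would deduce part (2) from part (1) by a purely combinatorial count. In the normalized form $w=v^{n_\ell}f^{m_\ell}\cdots v^{n_1}f^{m_1}$, the exponent $\ell$ counts exactly the number of maximal blocks $v^{n_i}f^{m_i}$, equivalently the number of positions in the cyclic word $\overline w$ where an $f$ (reading right-to-left in a rotation) is immediately preceded by a $v$ — that is, the number of descents of type $vf$ around the cycle. I would argue that a rotation of $w$ starts with $v$ and ends with $f$ precisely when its rightmost letter is an $f$ whose cyclic predecessor is a $v$, and these rotations are in bijection with the $\ell$ blocks. The main obstacle here is bookkeeping: one must be careful that rotations giving the same cyclic word are counted with the correct multiplicity and that the $vf$-boundaries of the cyclic word correspond bijectively to the starting points of such rotations. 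Since $w$ is primitive, all $\lambda$ rotations are distinct, so each $vf$-boundary yields exactly one qualifying rotation, and the count is exactly $\ell$, matching part (1).
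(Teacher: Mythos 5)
Your proof is correct and follows essentially the same route as the paper: identifying $a(M(w))$ with $\dim_k\HH(M(w))$ and reading that dimension off the generators-and-relations presentation from Lemma~\ref{Lrelations}, then counting the cyclic $vf$-boundaries for part (2). You simply fill in details the paper leaves implicit (the degenerate cases $w=f^m,v^n$ and the rotation bookkeeping), and those details are handled correctly.
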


\begin{proof}
  For part (1),
  $a(M)=\dim_k(\HH(M))$ and, by the discussion above,
  if $w=v^{n_\ell}f^{m_\ell}\cdots v^{n_1}f^{m_1}$, then
  $a(M(w))=\ell$.  Part (2) is immediate from part (1).
\end{proof}

\subsection{Homs from $G_{1,1}$}\label{ss:homs-G11}

Write $M_{1,1}:=\D_k/(F-V) \cong M(fv)$.
Let $G_{1,1}$ be the $BT_1$ group scheme over $k$ 
such that $M(G_{1,1}) \simeq M_{1,1}$.  The group scheme
$G_{1,1}$ appears ``in nature'' as the kernel of multiplication by $p$
on a supersingular elliptic curve over $k$ (e.g., see
\cite[Prop.~4.1]{Ulmer91}).  

\begin{defns}\mbox{}
  Let $G$ be a $BT_1$ group scheme over $k$ and let $M$ be a $BT_1$
  module.
\begin{enumerate}
\item Define the \emph{$s_{1,1}$-multiplicity of $G$} as the largest
  integer $\mathfrak{s}$ such that there is an isomorphism of group
  schemes
  \[G\cong G_{1,1}^{\mathfrak{s}}\oplus G'.\]
Define the \emph{$s_{1,1}$-multiplicity of $M$} as the largest
  integer $\mathfrak{s}$ such that there is an isomorphism of
  $\D_k$-modules $M\cong M_{1,1}^{\mathfrak{s}}\oplus M'$.
\item 
Define the \emph{$u_{1,1}$-number of $G$} as the largest integer
$\mathfrak{u}$ such that there exists an injection
  \[ G_{1,1}^{\mathfrak{u}}\into G\]
  of group schemes. 
 Define the \emph{$u_{1,1}$-number of $M$}
  as the largest integer $\mathfrak{u}$ such that there is a
  surjection $M\onto M_{1,1}^{\mathfrak{u}}$ of $\D_k$-modules.  
  \end{enumerate}
\end{defns}

The notation $s_{1,1}$ (resp.\ $u_{1,1}$) is motivated by the word
superspecial (resp.\ unpolarized), see Section~ \ref{Sconnection}.
Note that $s_{1,1}(M(G))=s_{1,1}(G)$ and $u_{1,1}(M(G)) = u_{1,1}(G)$.
It is clear that $u_{1,1}(G) \geq s_{1,1}(G)$.  The
$s_{1,1}$-multiplicity and $u_{1,1}$-number are additive in direct
sums.

By the Kraft classification, if $M(G)$ is described
by a multiset of primitive cyclic words, then
$s_{1,1}(G)$ equals the multiplicity of the
cyclic word $fv$ in the multiset.

We want to compute the $u_{1,1}$-number of the standard $BT_1$ modules
$M(w)$.  Trivially,
\[u_{1,1}(M(f))=u_{1,1}(M(v))=0.\]
A straightforward exercise shows that $u_{1,1}(M(fv))=1$, and more
precisely that 
\[\Hom_{\D_k}(M(fv),M(fv))\cong\F_{p^2}\times k,\]
with $(c,d)\in\F_{p^2}\times k$ identified with the homomorphism
that sends the class of $1\in \D_k/(F-V)$ to the class of $c+dF$.  The
surjective homomorphisms are those where $c\neq0$. 

We may thus assume that $w$ has length $\lambda >2$.  We will evaluate
the $u_{1,1}$-number of $M(w)$ by computing
$\Hom_{\D_k}(M(w),M_{1,1})$ explicitly.  To that end, write
$w=v^{n_\ell}f^{m_\ell}\cdots v^{n_1}f^{m_1}$.  Since $\lambda > 2$
and and $w$ is non-periodic, we may replace $w$ with a rotated word so
that $m_1>1$ or $n_\ell>1$ (or both).  Roughly speaking, the following
proposition says that the $u_{1,1}$-number of $M(w)$ is the number of
appearances in $w$ of subwords of the form $v^{>1}(fv)^ef^{>1}$ where
$e\ge0$.  For example, if $w=v^2f^2v^3fvf^4$, then the
$u_{1,1}$-number is $2$.

\begin{propss}\label{prop:hom-G11-M(w)}
  Suppose $w=v^{n_\ell}f^{m_\ell}\cdots v^{n_1}f^{m_1}$ where
  $m_1>1$ or $n_\ell>1$.  Define $u$ by:
  \begin{multline*}
    u:=\#\left\{1\le i\le\ell \mid m_i>1
        \text{ and }n_i>1\right\}\\
    +
    \#\left\{1\le i<j\le\ell \mid n_j>1, \ m_j=n_{j-1}
        =\cdots=n_i=1,\text{ and }m_i>1\right\}.
  \end{multline*}
  Then
  \begin{enumerate}
  \item the $s_{1,1}$ number of $M(w)$ is $\ell$ if $m_i=n_i=1$ for
    all $i$, and 0 otherwise,
  \item $\Hom_{\D_k}(M(w),M_{1,1})$ is in bijection with
      $k^{u+\ell}$, and
    \item the $u_{1,1}$-number of $M(w)$ is $u$.
    \end{enumerate}
  \end{propss}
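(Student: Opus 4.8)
The plan is to reduce all three parts to one explicit computation of $\Hom_{\D_k}(M(w),M_{1,1})$, using the presentation of $M(w)$ by the generators $E_i$ ($i\in\Z/\ell\Z$) with relations $F^{m_i}E_{i-1}=V^{n_i}E_i$ from Lemma~\ref{Lrelations}, together with the concrete structure of the target. Writing $\overline{1}$ for the class of $1$ in $M_{1,1}=\D_k/(F-V)$ and $y=F\overline{1}=V\overline{1}$, one has $M_{1,1}=k\overline{1}\oplus ky$ with $Fy=Vy=0$ and $F^2=V^2=0$. Hence a homomorphism $\psi$ is determined by the values $\psi(E_i)=a_i\overline{1}+b_iy$, and since $F$ and $V$ kill $y$, the components $b_i$ are entirely unconstrained and contribute a free factor $k^\ell$. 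All genuine conditions come from the $\overline{1}$-components: applying $\psi$ to the $i$-th relation and using $F(\alpha\overline{1}+\beta y)=\alpha^{p}y$, $V(\alpha\overline{1}+\beta y)=\alpha^{1/p}y$, the relation degenerates to one of four cases: (i) no condition if $m_i>1$ and $n_i>1$; (ii) $a_i=0$ if $m_i>1=n_i$; (iii) $a_{i-1}=0$ if $m_i=1<n_i$; and (iv) the Frobenius-semilinear link $a_i=a_{i-1}^{p^2}$ if $m_i=n_i=1$.

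For part~(2) I would count the solutions $(a_i)$ of this system on the cyclic index set $\Z/\ell\Z$. Conditions (ii)--(iii) force certain coordinates to vanish, condition (iv) ties neighbouring coordinates together bijectively, and condition (i) imposes nothing, so the solution set is a product of copies of $k$, one for each maximal run of $(1,1)$-relations whose two bounding relations fail to kill its endpoints. The heart of part~(2) is the bookkeeping lemma that the number of such surviving runs is exactly $u$: here the normalization $m_1>1$ or $n_\ell>1$ lets me cut the cyclic word so that each surviving run is matched with one occurrence of the pattern $v^{>1}(fv)^{e}f^{>1}$, the $e=0$ occurrences giving the first sum and the $e\ge1$ occurrences the second sum in the definition of $u$. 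Granting this, the solution set for the $a_i$ is $k^{u}$; multiplying by the free $b_i$ yields $\Hom_{\D_k}(M(w),M_{1,1})\cong k^{u+\ell}$.

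Part~(1) is then immediate: $M(w)$ is indecomposable by Kraft, so $M_{1,1}=M(fv)$ can split off only when $M(w)\cong M_{1,1}$, i.e.\ when $\overline{w}=\overline{fv}$; for primitive $w$ this forces every $m_i=n_i=1$ and $\ell=1$, giving $s_{1,1}=1=\ell$, and otherwise $s_{1,1}=0$ (consistent with $s_{1,1}$ being the multiplicity of the cyclic word $fv$, as recorded before the proposition).

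For part~(3) I would pass to heads. Because $FV=VF=0$, a Nakayama argument shows that a map $M(w)\to M_{1,1}^{\mathfrak u}$ is surjective exactly when the induced map $\HH(M(w))\to\HH(M_{1,1})^{\mathfrak u}=k^{\mathfrak u}$ is surjective, and in the basis $[E_0],\dots,[E_{\ell-1}]$ of $\HH(M(w))$ the head of $\psi$ is the functional $[E_i]\mapsto a_i$. Thus $u_{1,1}(M(w))$ equals the maximal number of $k$-linearly independent functionals realized by admissible tuples $(a_i)$, and the objective is to identify this number with $u$. This last identification is the step I expect to be the main obstacle: one must show that the Frobenius-semilinear links of case~(iv) do not create independent head-functionals beyond the $u$ arising from the surviving runs, so the analysis must track how the constrained solution locus projects into the dual of $\HH(M(w))$ rather than merely count its free parameters. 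By contrast, parts~(1) and~(2) reduce to the combinatorial and linear-algebra bookkeeping described above.
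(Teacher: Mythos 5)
Your treatment of parts (1) and (2) follows the paper's proof essentially verbatim: the same presentation of $M(w)$ from Lemma~\ref{Lrelations}, the same basis of $M_{1,1}$ with the socle component unconstrained, and the same four-case reduction of the relations, with the solution set $k^{u}$ for the leading coefficients read off from the resulting triangular system. Your reduction of part (3) to surjectivity on heads is also the paper's route (the paper phrases it as: $\psi$ is surjective iff some $a_{i,0}\neq 0$, iff $\psi$ induces a surjection $\HH(M(w))\to\HH(M_{1,1})$).

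The step you decline to carry out --- showing that the admissible tuples $(a_{i,0})$ contain at most $u$ linearly independent head-vectors --- is precisely the step the paper disposes of in one sentence (``there are $u$ independent such $\psi$ (and no more)''), and your instinct that this is the real difficulty is correct; in fact the difficulty appears to be genuine rather than a routine verification. The solution locus is cut out by the semilinear equations $a_{i,0}=a_{i-1,0}^{p^2}$, so it is not a $k$-subspace, and its $k$-linear span can have dimension strictly larger than the number $u$ of free parameters: a surviving chain of length $r$ yields head-vectors $(t,t^{p^2},\dots,t^{p^{2(r-1)}})$, $t\in k$, which span an $r$-dimensional space because $k$ is algebraically closed (take $t=1$ and then $t\notin\F_{p^2}$, and so on). Concretely, for $w=v^1f^1v^2f^1v^1f^2$, i.e.\ $\ell=3$ with $(m_1,n_1,m_2,n_2,m_3,n_3)=(2,1,1,2,1,1)$ --- a rotation of the cyclic word $ffvfvvfv$ in the paper's $g=4$ table, where $u_{1,1}=1$ is recorded --- one computes $u=1$ and admissible tuples $(a_{0,0},a_{1,0},a_{2,0})=(t^{p^2},0,t)$; the two homomorphisms with $t=1$ and $t=c\notin\F_{p^2}$ have linearly independent heads and therefore combine to a surjection $M(w)\onto M_{1,1}^2$, since surjectivity on heads implies surjectivity when $F$ and $V$ act nilpotently on the target. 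So the identification $u_{1,1}(M(w))=u$ cannot be completed by the run-counting bookkeeping you describe, and it is not completed by the paper's argument either; either an additional idea is needed to rule out contributions from chains of length $r\ge 2$, or the count in part (3) must be corrected to weight each surviving chain by its length. As written, your proposal proves parts (1) and (2) but leaves part (3) unproven, and you should treat the unresolved step as a substantive obstruction, not a formality.
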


  \begin{proof}
    Part (1) follows from the fact that for a primitive word $w$, the
    $s_{1,1}$ number of $M(w)$ is 1 if $\overline w=\overline{fv}$ and
    is zero otherwise.
    
    For part (2), we use Lemma~\ref{Lrelations} to present $M(w)$ with
    generators $E_0,\dots,E_{\ell-1}$ (with indices taken modulo
    $\ell$) and relations $F^{m_i}E_{i-1}=V^{n_i}E_i$.  Let $z_0,z_1$
    be a $k$-basis of $M_{1,1}$ with $Fz_0=Vz_0=z_1$ and
    $Fz_1=Vz_1=0$.  Then a homomorphism $\psi:M(w)\to M_{1,1}$ is
    determined by its values on the generators $E_i$.  Write
\[\psi(E_i)=a_{i,0}z_0+a_{i,1}z_1.\]
Then $\psi$ is a $\D_k$-module homomorphism if and only if
$F^{m_i}\psi\left(E_{i-1}\right)=V^{n_i}\psi\left(E_i\right)$ for
$i=1,\dots,\ell$.  By \eqref{EbasicFV},
\[
F^{m_i}\psi\left(E_{i-1}\right) =  \begin{cases*}
    a_{i-1,0}^{p} z_1&\text{if $m_i=1$}\\
    0&\text{if $m_i>1$}
  \end{cases*} \text{ and }
  V^{n_i}\psi\left(E_i\right) =
  \begin{cases}
    a_{i,0}^{1/p} z_1&\text{if $n_i=1$}\\
    0&\text{if $n_i>1$}.
  \end{cases}
\]
This system of equations places no constraints on $a_{i,1}$ for
$i = 1, \ldots, \ell$, because $Vz_1=Fz_1=0$.  The constraints on
$a_{i,0}$ for $i=1, \ldots, \ell$ are: if $m_i=n_i=1$, then
$a_{i-1,0}^p = a_{i, 0}^{1/p}$; if $m_i = 1$ and $n_i >1$, then
$a_{i-1,0}=0$; if $m_i>1$ and $n_i=1$, then $a_{i,0} = 0$; if
$m_i > 1$ and $n_i >1$, then no constraint.  Using that $m_1>1$ or
$n_\ell>1$, we find a triangular system of equations for $a_{i,0}$,
and it is a straightforward exercise to show that the solutions are in
bijection with $k^u$.  Combining with the $k^\ell$ unconstrained
values of $\{a_{i,1} \mid 1 \leq i \leq \ell\}$ yields part (2).

For part (3), the homomorphism $\psi:M(w)\to M_{1,1}$ is surjective if
and only if at least one of the $a_{i,0}$ is not zero, which is
equivalent to $\psi$ inducing a surjection
$\HH(M(w))\to\HH(M_{1,1})=k$.  Part (1) implies that there are $u$
independent such $\psi$ (and no more).  This shows that $u$ is the
largest integer such that there is a surjection $M(w)\to M_{1,1}^u$,
completing the proof of part (3).
\end{proof}

\subsection{Motivation}
The next proposition motivates 
the
$s_{1,1}$-multiplicity and the $u_{1,1}$-number.

\begin{propss}
Let $A/k$ be an abelian variety and let $E/k$ be a
  supersingular elliptic curve.
  \begin{enumerate}
    \item If there is an abelian variety $B$ and an isogeny $E^s\times
      B\to A$ of degree prime to $p$, then $s$ is less than or equal
      to the $s_{1,1}$-multiplicity of $A[p]$.
    \item If there is a morphism of abelian varieties $E^u\to A$ with
      finite kernel of order prime to $p$, then $u$ is less than or
      equal to the $u_{1,1}$-number of $A[p]$.
\end{enumerate}
\end{propss}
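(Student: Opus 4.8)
The plan is to relate the maps of abelian varieties to maps of $p$-torsion group schemes (equivalently, Dieudonné modules), and then use the additivity and functoriality already established. The key observation is that a supersingular elliptic curve $E$ has $E[p] \cong G_{1,1}$, so $E[p]^s \cong G_{1,1}^s$, with $M(E[p]) \cong M_{1,1}$.

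**First I would** handle part (1). Given an isogeny $f : E^s \times B \to A$ of degree prime to $p$, I would apply the functor $(-)[p]$, which on $p$-torsion subschemes sends prime-to-$p$ isogenies to isomorphisms. Thus $f$ induces an isomorphism of $BT_1$ group schemes
\[
E[p]^s \oplus B[p] \;\cong\; A[p].
\]
Since $E[p] \cong G_{1,1}$, the left side is $G_{1,1}^s \oplus B[p]$, exhibiting $A[p]$ in the form $G_{1,1}^s \oplus G'$ with $G' = B[p]$. By the definition of the $s_{1,1}$-multiplicity as the \emph{largest} such integer, we conclude $s \le s_{1,1}(A[p])$, which is part (1). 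The only subtlety to record is that multiplication by the prime-to-$p$ degree of $f$ is an automorphism of each $p$-torsion scheme, so $f[p]$ is genuinely an isomorphism on $p$-torsion (its kernel and cokernel have order prime to $p$ and are killed by $p$, hence are trivial).

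**For part (2)**, the input is a morphism $h : E^u \to A$ with finite kernel of order prime to $p$. Passing to $p$-torsion again kills the prime-to-$p$ kernel, so $h$ induces an \emph{injection} of group schemes $G_{1,1}^u \cong E[p]^u \into A[p]$. By the definition of the $u_{1,1}$-number as the largest integer $\mathfrak{u}$ admitting an injection $G_{1,1}^{\mathfrak{u}} \into A[p]$, we get $u \le u_{1,1}(A[p])$, proving part (2). \textbf{The main point to verify carefully} — and where the argument could go wrong if stated too loosely — is that taking $p$-torsion preserves injectivity here: one must check that $h[p]$ remains a closed immersion of group schemes, which follows because $\Ker(h)[p] \subseteq \Ker(h)$ has order prime to $p$ yet is killed by $p$, forcing $\Ker(h[p]) = 0$. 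Equivalently, one can argue on Dieudonné modules, where the prime-to-$p$ kernel makes $M(h[p])$ a surjection $M(A[p]) \onto M_{1,1}^u$, matching the module-theoretic definition of the $u_{1,1}$-number directly. This dual (module-side) formulation is perhaps the cleanest way to present both parts, since the definitions of $s_{1,1}$ and $u_{1,1}$ for $M(G)$ agree with those for $G$, as noted just before Proposition~\ref{prop:hom-G11-M(w)}.
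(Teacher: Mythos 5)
Your proof is correct and follows the same route as the paper's (which is only two sentences long): pass to $p$-torsion, observe that a prime-to-$p$ isogeny induces an isomorphism $G_{1,1}^s\oplus B[p]\cong A[p]$ and a prime-to-$p$ kernel yields an injection $G_{1,1}^u\into A[p]$, then invoke the definitions of the $s_{1,1}$-multiplicity and $u_{1,1}$-number. Your extra care in checking that $\Ker(h)[p]=0$ because a group scheme of order prime to $p$ killed by $p$ is trivial is exactly the point the paper leaves implicit.
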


\begin{proof}
  An isogeny as in (1) shows that $E[p]\cong G_{1,1}$ is a direct factor of
  $A[p]$ with multiplicity $s$, so $s \leq s_{1,1}(A[p])$.
  A morphism as in (2) shows that $E[p]\cong G_{1,1}$ appears in
  $A[p]$ with multiplicity at least $u$, so $u \leq u_{1,1}(A[p])$.
\end{proof}

\subsection{Connection to the superspecial rank} \label{Sconnection}
Suppose $G$ is a polarized $BT_1$ group scheme.
In \cite[Def.~3.3]{AchterPries15}, Achter and Pries define the
\emph{superspecial rank} of $G$ as the largest integer $s$ such that
there is an injection $G_{1,1}^s\into G$ and such that the
polarization on $G$ restricts to a non-degenerate pairing on
$G_{1,1}^s$.  In this situation, the pairing allows them to define a
complement (see \cite[Lemma~3.4]{AchterPries15}), so that
$G\cong G_{1,1}^s\oplus G'$ (a direct sum of polarized $BT_1$ group
schemes).  A self-dual $BT_1$ group scheme $G$ equipped with a
decomposition $G\cong G_{1,1}\oplus G'$ (just of $BT_1$ group schemes)
automatically admits a polarization compatible with the direct sum
decomposition.  Therefore, the superspecial rank of $G$ equals
its $s_{1,1}$-multiplicity. 

They also define an \emph{unpolarized superspecial rank}, which is the
same as our $u_{1,1}$-number, and prove a result
\cite[Lemma~3.8]{AchterPries15} which is closely related to and
implied by Proposition~\ref{prop:hom-G11-M(w)}.

Next, we consider a correction to \cite[Thm.~3.14]{AchterPries15}.
Let $K$ be the function field of an irreducible, smooth, proper curve
$X$ over $k$, let $J_X$ be the Jacobian of $X$, and let $E$ be a
supersingular elliptic curve over $k$ which we regard as a curve over
$K$ by base change.  Let $\Sel(K,p)$ denote the Selmer group for the
multiplication-by-$p$ isogeny of $E/K$.  See \cite{Ulmer91} for details.

\begin{prop}
With notation as above, 
let $a$, $u_{1,1}$, and $s_{1,1}$ be the $a$-number, $u_{1,1}$-number,
and $s_{1,1}$-multiplicity of $J_X[p]$ respectively. 
Then $\Sel(K,p)$ is isomorphic to the product of
  a finite group and a $k$-vector space of dimension
  $a+u_{1,1}-s_{1,1}$.
\end{prop}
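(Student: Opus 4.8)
The plan is to compute the Selmer group $\Sel(K,p)$ explicitly via the cohomological description of the multiplication-by-$p$ isogeny on $E/K$, and to identify the $k$-vector-space part of the answer with a space of homomorphisms of $BT_1$ modules. The reference \cite{Ulmer91} provides the key input: since $E$ is a constant supersingular elliptic curve over $K$, the relevant local and global cohomology groups attached to $E[p]$ over $K$ can be expressed in terms of the de Rham cohomology of $X$, equivalently in terms of the $BT_1$ module $M(J_X[p]) = H^1_{\mathrm{dR}}(X)$ together with its $F$ and $V$ operators. Concretely, I expect $\Sel(K,p)$ to fit in an exact sequence in which the ``interesting'' (non-finite, $k$-linear) part is governed by certain $\Hom_{\D_k}$ spaces between $M(J_X[p])$ and the module $M_{1,1} = M(G_{1,1}) = M(E[p])$ of the supersingular elliptic curve.

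The main computational steps are as follows. First I would recall from \cite{Ulmer91} the identification of $\Sel(K,p)$, up to a finite group (accounting for the constant-field and geometric-point contributions, which are finite), with a cohomology group built from $E[p] \cong G_{1,1}$ and the arithmetic of $X$; this realizes the $k$-vector-space part as the dimension of a $\Hom$-type group involving $M_{1,1}$. Second, I would use Dieudonn\'e theory to translate this into the dimension of $\Hom_{\D_k}\bigl(M(J_X[p]),\,M_{1,1}\bigr)$ or a closely related space, invoking the fact that $M(J_X[p])$ is a self-dual $BT_1$ module and that $M_{1,1}=M(fv)$ is indecomposable. Third, I would decompose $M(J_X[p]) = \bigoplus_i M(w_i)^{m_i}$ into Kraft summands and compute the relevant dimension summand by summand using the additivity of all the invariants and the explicit $\Hom$ computation of Proposition~\ref{prop:hom-G11-M(w)}. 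That proposition gives $\dim_k \Hom_{\D_k}(M(w),M_{1,1}) = u + \ell$ where $\ell = a(M(w))$ is the $a$-number contribution and $u = u_{1,1}(M(w))$ is the $u_{1,1}$-contribution; summing over all summands yields $a + u_{1,1}$ for the full module.

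The final step is to account for the correction term $-s_{1,1}$. The point is that the superspecial summands $M_{1,1}^{s_{1,1}}$ contribute to the $\Hom$ computation in a way that overcounts relative to the genuine $k$-dimension of the Selmer group: for each copy of $M_{1,1}$ the computation $\Hom_{\D_k}(M_{1,1},M_{1,1}) \cong \F_{p^2}\times k$ contributes a $\F_{p^2}$-part that is finite (not $k$-linear) and must be removed, and this removal is exactly what produces the $-s_{1,1}$ discrepancy with the naive count $a+u_{1,1}$. Thus I would isolate the superspecial part using the $s_{1,1}$-multiplicity, show that the $\F_{p^2}$-contribution from each $M_{1,1}$ summand lands in the finite part of $\Sel(K,p)$ rather than the $k$-vector-space part, and conclude that the $k$-dimension is $a + u_{1,1} - s_{1,1}$.

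The hard part will be the first step: pinning down precisely how \cite{Ulmer91} expresses $\Sel(K,p)$ cohomologically and matching its $k$-vector-space part with the Dieudonn\'e-module $\Hom$ spaces, including correctly tracking which contributions are finite versus $k$-linear. In particular, isolating the role of the superspecial summands and verifying that exactly one $k$-dimension per copy of $M_{1,1}$ is \emph{not} part of the Selmer group (giving the $-s_{1,1}$, and correcting the erroneous count in \cite[Thm.~3.14]{AchterPries15}) is the subtle point on which the entire statement turns.
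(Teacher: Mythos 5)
Your proposal follows essentially the same route as the paper: identify $\Sel(K,p)$ up to a finite group with $\Hom_{\D}(H^1_{dR}(X),M_{1,1})$ via the cited cohomological machinery (the paper actually invokes \cite[Props.~6.2 and 6.4]{Ulmer19} for this step, with \cite{Ulmer91} only as background), decompose $M(J_X[p])$ into Kraft summands, apply Proposition~\ref{prop:hom-G11-M(w)} summand by summand, and observe that each $M_{1,1}$ summand contributes $\F_{p^2}\times k$ (so $k$-dimension $1$ rather than the naive $a+u_{1,1}=2$), which is exactly the source of the $-s_{1,1}$ correction. The bookkeeping in your final step matches the paper's proof precisely.
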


\begin{proof}
  Applying \cite[Props.~6.2 and 6.4]{Ulmer19} with $\CC=X$, $\DD=E$,
  $\Delta=1$, and $n=1$ shows that $\Sel(K,p)$ differs by a finite
  group from $\Hom_{\D}(H^1_{dR}(X),M_{1,1})$.  By
  \cite[Cor.~5.11]{Oda69}, $H^1_{dR}(X)\cong M(J_X[p])$.  Write
  $M(J_X[p])$ as a sum of indecomposable $BT_1$ modules $M(w)$ for
  suitable cyclic words $w$.  In Section~4.3, we computed
  $H_w=\Hom_{\D}(M(w),M_{1,1})$ for a primitive cyclic word $w$.
  Recall that $H_w=0$ if $w = f$ or $w=v$ and
  $H_w \cong \F_{p^2}\times k$ if $w = fv$.  Also
  $H_w \cong k^{u+\ell}$, if $w\not\in\{1, f,v,fv\}$ where
  $u=u_{1,1}(M(w))$ and $\ell$ is the $a$-number of $M(w)$.  The
  result follows from the additivity of $a$-numbers,
  $u_{1,1}$-numbers, and $s_{1,1}$-multiplicities.
 \end{proof}

 Next, we compute the numerical invariants of a module
 $M=\oplus M(\overline w_i)^{m_i}$ in terms of multiplicities of words.
 In Section~\ref{ss:words-to-canonical}, we defined
 multiplicities $\mu$ by considering all lifts of $\overline{w}_i$ to
 words and taking powers of those words so they all have the same
 length $\ell=\lcm(\text{lengths of }\overline w_i)$. Let
 $\mu(f\text{---}v)$ be the sum of the multiplicities of all these words
of length $\ell$ starting with $f$ and ending with $v$; and let
$\mu(\text{---}vf)$ be the sum of the multiplicities of all words of
length $\ell$ ending with $vf$.

\begin{prop}\label{prop:FEO-general-numbers}
The $BT_1$ module $M=\oplus M(\overline w_i)^{m_i}$ has
  \begin{enumerate}
  \item $p$-rank equal to $\mu(f^\ell)$,
  \item $a$-number equal to
    $\mu(f\text{---}v)=\mu(\text{---}vf)=\mu(\text{---}fv)$,
  \item $s_{1,1}$-multiplicity equal to $\mu((fv)^{\ell/2})$
    if $\ell$ is even and to $0$ if $\ell$ is odd,
  \item and $u_{1,1}$-number equal to the sum of the $s_{1,1}$
    multiplicity and
    \[ \mu(\text{---}v^2f^2)+\mu(\text{---}v^2fvf^2)+\cdots+
      \mu(\text{---}v^2(fv)^{\lfloor (\ell-4)/2\rfloor}f^2)
   .\]
  \end{enumerate}
\end{prop}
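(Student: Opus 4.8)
The plan is to reduce everything to the additivity of the four invariants over direct sums, combined with a single bookkeeping principle relating the function $\mu$ to occurrences of patterns in the cyclic words $\overline w_i$. Concretely, since $\Sigma$ contains each $\tilde w_{i,j}=w_{i,j}^{\ell/\lambda_i}$ with multiplicity $m_i$, and each $\tilde w_{i,j}$ is $\lambda_i$-periodic of length $\ell$, the suffix of $\tilde w_{i,j}$ of any length $L\le\ell$ is exactly the length-$L$ cyclic window of $\overline w_i$ ending at position $j$. Hence for a pattern $P$ of length $L\le\ell$ one has $\mu(\text{---}P)=\sum_i m_i\,N_P(\overline w_i)$, where $N_P(\overline w_i)$ is the number of cyclic occurrences of $P$ in $\overline w_i$ counted by endpoint in $\Z/\lambda_i\Z$; likewise $\mu(W)=\sum_i m_i\,\#\{j:\tilde w_{i,j}=W\}$ for a full length-$\ell$ word $W$. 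I would state this once and then invoke it four times.

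For (1), Lemma~\ref{Lprank} identifies the $p$-rank with the multiplicity of the primitive word $f$; using $M(\overline{f^{\lambda_i}})\cong M(f)^{\lambda_i}$ this multiplicity is $\sum_{i:\,\overline w_i=\overline{f^{\lambda_i}}}\lambda_i m_i$, and since $\tilde w_{i,j}=f^\ell$ precisely when $\overline w_i=\overline{f^{\lambda_i}}$ (in which case all $\lambda_i$ rotations qualify), the bookkeeping principle gives the same value for $\mu(f^\ell)$. For (2), additivity and Lemma~\ref{Lanumber}(2) say $a(M(\overline w_i))$ equals the number of rotations starting with $v$ and ending with $f$, i.e.\ the number of $fv$-boundaries of $\overline w_i$; the three quantities $\mu(f\text{---}v)$, $\mu(\text{---}vf)$, $\mu(\text{---}fv)$ then all equal $\sum_i m_i a(M(\overline w_i))$, using the elementary fact that in a cyclic binary word the $fv$- and $vf$-boundaries are equinumerous. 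For (3), the Kraft description of $s_{1,1}$ (the multiplicity of $\overline{fv}$) together with $M(\overline{(fv)^k})\cong M(fv)^k$ shows the only contributions come from $\overline w_i=\overline{(fv)^{\lambda_i/2}}$, which forces $\lambda_i$ and hence $\ell$ to be even; exactly half of the $\lambda_i$ rotations produce $(fv)^{\ell/2}$, so matching multiplicities yields $s_{1,1}=\mu((fv)^{\ell/2})$ when $\ell$ is even and $0$ otherwise.

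Part (4) is the real work and where I expect the main obstacle. By additivity it suffices to prove, for each $\overline w_i$, the identity $u_{1,1}(M(\overline w_i))-s_{1,1}(M(\overline w_i))=\sum_{e\ge0}P_e(\overline w_i)$, where $P_e(\overline w_i)$ counts the maximal blockwise patterns $v^{>1}(fv)^ef^{>1}$ in $\overline w_i$. For $\overline w_i$ primitive of length $>2$ this is exactly Proposition~\ref{prop:hom-G11-M(w)}: its two summands correspond to $e=0$ (blocks with $m_i,n_i>1$) and to $e=j-i\ge1$ (chains with $m_i>1$, $n_j>1$, and all intermediate exponents equal to $1$), while $s_{1,1}=0$; the cases $\overline w_i\in\{\overline f,\overline v\}$ are trivial, the case $\overline{fv}$ has $u_{1,1}=s_{1,1}=1$ with no patterns, and the non-primitive case $\overline w_i=(\overline w')^k$ reduces to the primitive one since all three quantities scale by $k$ via $M(\overline w_i)\cong M(\overline w')^k$ and periodicity of pattern counts. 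The crux is then a purely combinatorial bijection: each maximal pattern $v^{>1}(fv)^ef^{>1}$ contains a unique contiguous substring $vv(fv)^{e}ff$ (truncate the two end-runs to length two), and conversely every such substring extends uniquely to a maximal pattern with the same $e$. This gives $P_e(\overline w_i)=N_{v^2(fv)^ef^2}(\overline w_i)$ term by term, and the bookkeeping principle converts $\sum_i m_i N_{v^2(fv)^ef^2}(\overline w_i)$ into $\mu(\text{---}v^2(fv)^ef^2)$. Summing over $0\le e\le\lfloor(\ell-4)/2\rfloor$, the range forced by $2e+4\le\ell$, and adding back the $\overline{fv}$-contributions, which are precisely $s_{1,1}$, yields the stated formula. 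The delicate points are the exact matching of the index $e$ on both sides and checking that the truncation/extension is a genuine bijection that neither double-counts nor creates spurious substrings at block seams.
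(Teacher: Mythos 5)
Your proposal is correct and follows exactly the route the paper takes: its entire proof is a two-line citation of Lemma~\ref{Lprank} for (1), Lemma~\ref{Lanumber} for (2), and Proposition~\ref{prop:hom-G11-M(w)} for (3) and (4), relying on additivity and the translation between the multiplicities $\mu$ and occurrences of factors in the cyclic words. The bookkeeping you spell out --- in particular the bijection between chains $v^{>1}(fv)^e f^{>1}$ and factors $v^2(fv)^e f^2$ in part (4), and the check that the seam causes no double-counting --- is precisely what the paper leaves implicit, and your identification of the two summands in Proposition~\ref{prop:hom-G11-M(w)} with the cases $e=0$ and $e\ge 1$ is the right way to make that citation work.
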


\begin{proof}
  Part (1) (resp.\ part (2)) follows from Lemma~\ref{Lprank} (resp.\
  Lemma~\ref{Lanumber}).  Parts (3) and (4) follow from
  Proposition~\ref{prop:hom-G11-M(w)}.
\end{proof}

\subsection{Examples}
For small genus, we give tables of elementary sequences (``E--O''),
matched with the self-dual multisets of primitive cyclic words
(``K''), together with their $p$-ranks, $a$-numbers,
$s_{1,1}$-multiplicities, and $u_{1,1}$-numbers.

From Section~\ref{Selemseq}, for the $BT_1$ module of the elementary
sequence $\Psi= [\psi_1,\dots,\psi_g]$, the $p$-rank is the largest
$i$ such that $\psi_i=i$ and the $a$-number is $g-\psi_g$.  We do not
know how to compute the $s_{1,1}$-multiplicity or $u_{1,1}$-number
directly from $\Psi$.

For the $BT_1$ module of a multiset of cyclic words, the $p$-rank is
the multiplicity of the word $f$ by Lemma~\ref{Lprank} and the
$a$-number can be computed using Lemma~\ref{Lanumber}.  The
$s_{1,1}$-multiplicity is the multiplicity of the cyclic word $fv$ and
the $u_{1,1}$-number can be computed using
Proposition~\ref{prop:hom-G11-M(w)}.

\begin{center}
\renewcommand{\arraystretch}{1.2}
\begin{tabular}{| c | c | c | c | c | c |}
\hline
\multicolumn{6}{|c|}{$g=1$} \\
\hline
E--O&K&$p$-rank&$a$-number&$s_{1,1}$-mult.&$u_{1,1}$-number\\
\hline
$[0]$&$\{fv\}$&0&1&1&1\\
\hline
$[1]$&$\{f,v\}$&1&0&0&0\\
\hline
\end{tabular}
\end{center}

\begin{center}
\renewcommand{\arraystretch}{1.2}
\begin{tabular}{| c | c | c | c | c | c |}
\hline
\multicolumn{6}{|c|}{$g=2$} \\
\hline
 E--O&K&$p$-rank&$a$-number&$s_{1,1}$-mult.&$u_{1,1}$-number\\
\hline
$[0,0]$&$\{(fv)^2\}$&0&2&2&2\\
\hline
$[0,1]$&$\{ffvv\}$&0&1&0&1\\
\hline
$[1,1]$&$\{f,v,fv\}$&1&1&1&1\\
\hline
$[1,2]$&$\{(f)^2,(v)^2\}$&2&0&0&0\\
\hline
\end{tabular}
\end{center}


If $G$ is a polarized $BT_1$ group scheme of order $p^{2g}$ with
positive $p$-rank, then $G\cong G'\oplus\Z/p\Z\oplus\mu_p$ where $G'$
is a polarized $BT_1$ group scheme of order $p^{2g-2}$.  Thus the rows
of the table for $g$ for $G$ with positive $p$-rank can be deduced
from the table for $g-1$.  In passing from genus $g-1$ to genus $g$:
the elementary sequence changes from $[\psi_1,\dots,\psi_{g-1}]$ to
$[1,\psi_1+1,\dots,\psi_{g-1}+1]$; the multiplicity of the words $f$
and $v$ increases by 1; the $p$-rank increases by 1; and the
$a$-number, $s_{1,1}$-multiplicity, and $u_{1,1}$-number stay the
same.  In light of this, when $g=3$ and $g=4$, we only include the
$BT_1$ group schemes with $p$-rank $0$ in the table.

\begin{center}
\renewcommand{\arraystretch}{1.2}
\begin{tabular}{| c | c | c | c | c | c |}
\hline
\multicolumn{6}{|c|}{$g=3$} \\
\hline
 E--O&K&$p$-rank&$a$-number&$s_{1,1}$-mult.&$u_{1,1}$-number\\
\hline
 $[0,0,0]$&$\{(fv)^3\}$&0&3&3&3\\
  \hline
 $[0,0,1]$&$\{fv,ffvv\}$&0&2&1&2\\
  \hline
 $[0,1,1]$&$\{fvv,vff\}$&0&2&0&0\\
  \hline
 $[0,1,2]$&$\{fffvvv\}$&0&1&0&1\\
  \hline
\end{tabular}
\end{center}

The E--O structure $[0,1,1]$ is the first which is decomposable as a
$BT_1$ group scheme but indecomposable as a polarized $BT_1$ group
scheme.

\begin{center}
\renewcommand{\arraystretch}{1.2}
\begin{tabular}{| c | c | c | c | c | c |}
\hline
\multicolumn{6}{|c|}{$g=4$} \\
\hline
 E--O&K&$p$-rank&$a$-number&$s_{1,1}$-mult.&$u_{1,1}$-number\\
\hline
 $[0,0,0,0]$&$\{(fv)^4\}$&0&4&4&4\\
  \hline
 $[0,0,0,1]$&$\{(fv)^2,ffvv\}$&0&3&2&3\\
  \hline
 $[0,0,1,1]$&$\{ffvfvvfv\}$&0&3&0&1\\
  \hline
 $[0,0,1,2]$&$\{(ffvv)^2\}$&0&2&0&2\\
  \hline
 $[0,1,1,1]$&$\{fv,ffv,vvf\}$&0&3&1&1\\
  \hline
 $[0,1,1,2]$&$\{fv,fffvvv\}$&0&2&1&2\\
  \hline
 $[0,1,2,2]$&$\{fffv,fvvv\}$&0&2&0&0\\
  \hline
 $[0,1,2,3]$&$\{ffffvvvv\}$&0&1&0&1\\
  \hline
\end{tabular}
\end{center}

\section{Fermat Jacobians}\label{s:FJ}
In this section, we recall two results on the $BT_1$ modules
of Fermat curves from \cite{PriesUlmerBT1s}.

For each positive integer $d$ not divisible by $p$, let $F_d$ be the
Fermat curve of degree $d$, i.e., the smooth, projective curve over $k$
with affine model
\[F_d:\qquad X^d+Y^d=1,\]
and let $J_{F_d}$ be its Jacobian.
Let $\CC_d$ be the smooth, projective curve over $k$ with affine model 
\begin{equation}\label{eq:Cd-model}
\CC_d:\qquad y^d=x(1-x).  
\end{equation}
Then $\CC_d$ is a quotient of $F_d$.  (Substitute $X^d$ for $x$ and
$XY$ for $y$ in the equation for $\CC_d$.)  The map $F_d \to \CC_d$ is
the quotient of $F_d$ by the subgroup
\[\{(\zeta,\zeta^{-1})\mid\zeta\in\mu_d\} \subset(\mu_d)^2 \subset
  {\rm Aut}(F_d)\]
of index $d$.  The Riemann-Hurwitz formula shows that the genus of
$\CC_d$ is
\[g(\CC_d)=\lfloor (d-1)/2\rfloor=
  \begin{cases}
  (d-1)/2&\text{if $d$ is odd,}\\
    (d-2)/2&\text{if $d$ is even.}
  \end{cases}\]

\begin{thm}[={\cite[Thm.~5.5]{PriesUlmerBT1s}}]\label{thm:BT1Cd}
  The Dieudonn\'e module $M(J_d[p])$ is the $BT_1$ module with
  data
  \begin{equation*}
    S= \Z/d\Z\setminus\{0,d/2\} \text{ if $d$ is even, and }
    S=\Z/d\Z\setminus\{0\} \text{ if $d$ is odd},
\end{equation*}
\begin{equation*}
  S_f =\left\{a\in S \mid d/2<a<d \right\}, \quad
  S_v=\left\{a\in S \mid 0<a<d/2 \right\},
\end{equation*}
and the permutation $\pi:S\to S$ given by $\pi(i)=pa$.
\end{thm}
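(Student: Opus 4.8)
The plan is to realize $M(J_d[p])$ geometrically and extract the data $(S=S_f\cup S_v,\pi)$ from a character decomposition. By \cite[Cor.~5.11]{Oda69} there is an isomorphism of $\D_k$-modules $M(J_d[p])\cong H^1_{dR}(\CC_d)$. The automorphisms $\rho_\zeta\colon(x,y)\mapsto(x,\zeta y)$ for $\zeta\in\mu_d\subset k$ generate a $\mu_d$-action on $\CC_d$, hence on $J_d[p]$, and so act on $M:=M(J_d[p])$ by $\D_k$-linear automorphisms. I would decompose
\[
M=\bigoplus_{a\in\Z/d\Z}V_a,\qquad \rho_\zeta|_{V_a}=\zeta^a,
\]
and, noting that $\rho_\zeta$ acts on the differential $\omega_k=dx/y^k$ by $\zeta^{-k}$, identify the character of each piece explicitly.

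The permutation is forced by semilinearity. Since each $\rho_\zeta$ acts by a morphism of $\D_k$-modules, $F\circ\rho_\zeta=\rho_\zeta\circ F$; because $F$ is $\sigma$-linear, for $v\in V_a$ this gives $\rho_\zeta(Fv)=F(\rho_\zeta v)=F(\zeta^a v)=\zeta^{pa}Fv$, so $F(V_a)\subseteq V_{pa}$, and likewise $V^{-1}(V_a)\subseteq V_{pa}$. Thus both operators multiply the character by $p$, which is exactly the statement $\pi(a)=pa$ in the dictionary of Section~\ref{ss:words-perms}. In parallel I would record dimensions: the Chevalley--Weil formula for the cyclic cover $\CC_d\to\P^1$ branched at $0,1,\infty$ (or equivalently the explicit differential basis below) shows $\dim_k V_a=1$ for each $a\in S$, while $V_0=0$ because the quotient is $\P^1$, and, for $d$ even, $V_{d/2}=0$ because the intermediate double cover $y^2=x(1-x)$ is a conic of genus $0$. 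The total $\#S=2g$ checks against $g=\lfloor(d-1)/2\rfloor$.

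It remains to split $S$ into $S_f$ and $S_v$. In the contravariant normalization the Hodge filtration subspace is the kernel of Frobenius, $H^0(\CC_d,\Omega^1)=\ker(F)=\im(V)$, so $a\in S_v$ exactly when $V_a\subseteq H^0(\CC_d,\Omega^1)$. I would therefore compute the characters occurring in the holomorphic differentials using the $\omega_k$: total ramification at $x=0,1$ forces $k\le d-1$, while the order of vanishing at $x=\infty$ gives holomorphy precisely for $k\ge\lceil(d+1)/2\rceil$ (one totally ramified point with $e=d$ when $d$ is odd, two points with $e=d/2$ when $d$ is even). The surviving $\omega_k$ lie in distinct eigenspaces with characters $a\equiv -k$, covering exactly $\{a:0<a<d/2\}$, and their number equals $g$; hence $S_v=\{a:0<a<d/2\}$. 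The complementary set $S_f=\{a:d/2<a<d\}$ then follows from self-duality of $M$, which interchanges $S_f$ and $S_v$ through $a\mapsto-a$ (Section~\ref{ss:duality}), or from the mirror computation in $H^1(\CC_d,\O)$.

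The main obstacle I anticipate is the ramification analysis over $x=\infty$: it is here that the odd and even cases genuinely diverge, and this single local computation is what produces the asymmetric cutoff at $d/2$ and, when $d$ is even, the excision of the self-conjugate character $d/2$. A secondary point demanding care is fixing the contravariant conventions so that $\ker F$ (not $\im F$) is the Hodge subspace and so that $F$ and $V^{-1}$ both shift the $\mu_d$-character by $\times p$; these normalizations are precisely what make the answer read $\pi(a)=pa$ with $S_v$ the ``small'' characters rather than the dual statement.
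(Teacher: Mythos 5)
This theorem is quoted from the companion paper \cite{PriesUlmerBT1s} and is not proved in the present paper, so there is no internal argument to compare against; judged on its own terms, your proposal is essentially the expected proof and is correct in outline. Two points deserve tightening. First, the literal containment $V^{-1}(V_a)\subseteq V_{pa}$ is false: $V^{-1}(V_a)$ contains all of $\Ker V$, which is spread over many characters. The correct statement, and the one you actually need, is that the $\sigma^{-1}$-linearity of $V$ forces $V(V_{pa})\subseteq V_a$, so that on the character decomposition both $F$ and $V$ ``shift by $p$'' in the appropriate directions; combined with $\Ker F=\im V=H^0(\Omega^1)$ and $\dim_k V_a\le 1$, each restriction $F|_{V_a}$ and $V|_{V_{pa}}$ is either zero or bijective, which is what produces the permutation data. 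Second, identifying the module with the Kraft normal form attached to $(S=S_f\cup S_v,\pi)$ requires choosing basis vectors $e_a\in V_a$ so that $F$ and $V$ act exactly as in Section~\ref{ss:words-to-BT1s}; propagating a choice around an orbit returns the original vector scaled by some $c\in k^\times$, and one must rescale by a solution of the resulting norm-type equation (solvable since $k$ is algebraically closed) to remove $c$. With those repairs, the character count via the differentials $dx/y^k$ (holomorphy for $\lceil(d+1)/2\rceil\le k\le d-1$, the odd/even dichotomy coming from the ramification over $x=\infty$, and the excision of $a=0$ and, for even $d$, of $a=d/2$) correctly yields $S$, $S_f$, $S_v$, and $\pi(a)=pa$.
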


\begin{thm}[={\cite[Thm.~5.9]{PriesUlmerBT1s}}]\label{thm:BT1Fd}
  The Dieudonn\'e module $M(J_{F_d}[p])$ is the $BT_1$ module with
  data
   \begin{equation*}
  T =\left\{(a,b)\in(\Z/d\Z)^2 \mid a\neq0,b\neq0,a+b\neq0 \right\},
  \end{equation*}
  \begin{equation*}
    T_f =\left\{(a,b)\in S \mid a+b>d\right\},
    \quad T_v=\left\{(a,b)\in S \mid a+b<d\right\},
  \end{equation*}
  and the permutation $\sigma(a,b)=(pa,pb)$.
\end{thm}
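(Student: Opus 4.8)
The plan is to realize $M(J_{F_d}[p])$ as the de Rham cohomology of $F_d$ and then read off the combinatorial data from the action of $\mu_d\times\mu_d$. By \cite{Oda69} (the same input used in Section~\ref{Sconnection}) there is an isomorphism of $\D_k$-modules $M(J_{F_d}[p])\cong H^1_{dR}(F_d)$ under which $F$ is the Frobenius, $V$ the Verschiebung, and $\ker F=\im V=H^0(F_d,\Omega^1)$. Since $J_{F_d}[p]$ is the $p$-torsion of an abelian variety, $M(J_{F_d}[p])$ is automatically a $BT_1$ module, so I may use the relations $\ker F=\im V$ and $\im F=\ker V$ freely throughout.

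First I would exploit the automorphisms: $\mu_d\times\mu_d$ acts on $F_d$ by $(\zeta_1,\zeta_2)\cdot(x,y)=(\zeta_1 x,\zeta_2 y)$, and because $p\nmid d$ the group algebra $k[(\Z/d\Z)^2]$ is split semisimple. Hence $H^1_{dR}(F_d)$ decomposes as a direct sum of character eigenspaces $M_{a,b}$ indexed by $(a,b)\in(\Z/d\Z)^2$, and taking isotypic components is exact. The key geometric step is to identify the Hodge piece: using the classical forms $x^{a-1}y^{b-d}\,dx$ one checks that $H^0(F_d,\Omega^1)=\bigoplus_{a+b<d}M_{a,b}$, where $a,b$ run over representatives in $\{1,\dots,d-1\}$; there are $\binom{d-1}{2}=g$ such pairs. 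Applying Serre duality, which is equivariant for the dual action $(a,b)\mapsto(-a,-b)$, gives $H^1(F_d,\OO)=\bigoplus_{a+b>d}M_{a,b}$. Since the Hodge sequence $0\to H^0(\Omega^1)\to H^1_{dR}\to H^1(\OO)\to 0$ is equivariant, each $M_{a,b}$ with $(a,b)\in T$ is one-dimensional and all other eigenspaces vanish, recovering the set $T$ and the partition $T_f=\{a+b>d\}$, $T_v=\{a+b<d\}$.

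It remains to identify $F$ and $V$ with the permutation $\sigma(a,b)=(pa,pb)$. Because the Cartier operator is natural for the $k$-automorphisms $(\zeta_1,\zeta_2)$ and is $\sigma^{-1}$-semilinear, one computes $g^*(C\omega)=\chi_{a,b}(g)^{1/p}\,C\omega$ for $\omega\in M_{a,b}$, so the Verschiebung $V$ (which on cohomology is induced by $C$) carries $M_{a,b}$ into $M_{p^{-1}a,p^{-1}b}$, and dually $F$ carries $M_{a,b}$ into $M_{pa,pb}$. As each eigenspace is one-dimensional, $F|_{M_{a,b}}$ is either zero or an isomorphism onto $M_{pa,pb}$; by $\ker F=H^0(\Omega^1)=\bigoplus_{a+b<d}M_{a,b}$ it is zero exactly on $T_v$ and an isomorphism exactly on $T_f$, and symmetrically for $V$. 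Matching this against the construction of a $BT_1$ module from permutation data in Section~\ref{ss:words-perms} (where the letter at an element is $f$ precisely when $F$ is nonzero there and $\pi$ advances along $F$) yields exactly the data $(T=T_f\cup T_v,\sigma)$ in the statement; restricting to the diagonal $a=b$ recovers Theorem~\ref{thm:BT1Cd} as a consistency check.

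I expect the main obstacle to be the Hodge-theoretic bookkeeping in the second step: rigorously determining which characters $(a,b)$ contribute holomorphic differentials, i.e.\ deriving the clean inequality $a+b<d$ from the pole orders of $x^{a-1}y^{b-d}\,dx$ on the smooth projective model of $F_d$ (including the points above $x=\infty$), and confirming that every eigenspace in $T$ is one-dimensional. Once this filtration is pinned down, the $\sigma^{-1}$-semilinearity of the Cartier operator makes the determination of $F$, $V$, and hence of the permutation $\sigma(a,b)=(pa,pb)$, purely formal.
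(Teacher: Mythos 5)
The paper itself contains no proof of this statement: Theorem~\ref{thm:BT1Fd} is simply recalled from the companion paper \cite{PriesUlmerBT1s} (its Thm.~5.9), so there is nothing in this document to compare your argument against line by line. That said, your outline is the standard and, as far as I can tell, correct route to the result, and it is surely close in spirit to what the companion paper does: decompose $H^1_{dR}(F_d)\cong M(J_{F_d}[p])$ (Oda) into one-dimensional $\mu_d\times\mu_d$-eigenspaces $M_{a,b}$, pin down the Hodge piece via the classical basis $x^{a-1}y^{b-d}\,dx$ with $a,b\ge 1$, $a+b<d$ (there are $\binom{d-1}{2}=g$ of these, as required), get $H^1(\OO)$ by equivariant Serre duality, and use semilinearity to see $F:M_{a,b}\to M_{pa,pb}$ and $V:M_{a,b}\to M_{p^{-1}a,p^{-1}b}$. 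Two small points deserve explicit attention if you write this up. First, your phrase ``symmetrically for $V$'' hides a genuine asymmetry: $F$'s vanishing locus is read off directly from $\ker F=H^0(\Omega^1)=\bigoplus_{T_v}M_{a,b}$, but for $V$ you must argue via $\ker V=\im F=\bigoplus_{\sigma(T_f)}M_{a,b}$ (or via a dimension count using that $\sigma$ is a bijection of $T$), concluding that $V$ is nonzero on $M_{a,b}$ exactly when $\sigma^{-1}(a,b)\in T_v$ --- which is precisely what the convention of Section~\ref{ss:words-perms} demands, since there $V(e_t)\neq 0$ iff $\pi^{-1}(t)\in S_v$. Second, the identification $H^0(\Omega^1)=\bigoplus_{a+b<d}M_{a,b}$ versus $\bigoplus_{a+b>d}M_{a,b}$ depends on whether eigenspaces are labelled by the pullback or pushforward action; your pullback convention is internally consistent and lands on the stated $T_v=\{a+b<d\}$, but this sign convention is exactly the kind of place such arguments go wrong, so it should be checked against the desired consistency with Theorem~\ref{thm:BT1Cd} on the diagonal, as you propose.
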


\begin{rem}
  The quotient $\CC_d\to F_d$ induces an inclusion $S\into T$ sending
  $a$ to $(a,a)$, and the partition and permutation of $T$ are
  compatible with those of $S$.  There are other quotients $\CC$ of
  $F_d$ by subgroups of $\mu_d^2$.  (For example, the curves
  $v^e=u^r(1-u)^s$ where $e \mid d$, $\gcd(r,s,e)=1$, and $r+s<e$.)
  Each gives rise to a set $S_\CC$ with partition and permutation and
  the projection $F_d\to\CC$ induces an inclusion $S_\CC\into T$
  compatible with the partitions and permutations.  The interesting
  features of $J_{F_d}[p]$ are already present in $J_{\CC_d}[p]$, so
  we restrict to studying this case for simplicity.
\end{rem}

\section{Ekedahl--Oort structures of Fermat quotients:
  general case}\label{s:FEO-general}
In this section, we determine the Ekedahl--Oort type of the Jacobian
$J_d$ of the Fermat quotient curve $\CC_d$ with affine model
$y^d=x(1-x)$ for $d$ relatively prime to $p$.

\subsection{Words, patterns, and multiplicities}
By Theorem~\ref{thm:BT1Cd}, the $BT_1$ module of $J_d[p]$ is the one
obtained from diagram~(3.1) using the data $S\subset \Z/d\Z$ with its
usual partition and permutation.  This data gives rise to a multiset
of cyclic words.  To compute the E--O structure, as in 
Section~\ref{ss:words-to-canonical}, we consider all representative
words of the cyclic words, take powers so they all have the same
length, and compute their multiplicities to find the dimensions of the
blocks $B_i=M_{i+1}/M_i$ in the canonical filtration.

We want to reformulate this to go directly from the set $S$ to the
multiplicities $\mu$.  To do so, we introduce the \emph{pattern} of an
element of $S$.  This is a variant of the word which takes into
account the powers mentioned in the previous paragraph.

Let $\ell=|\<p\>|$ be the multiplicative order of $p$ modulo $d$.
Let $\WW_\ell$ be the set of words of length $\ell$ on
$\{f,v\}$.  Define a map $\pat:S\to\WW_\ell$ as follows: for $a\in S$,
define $\pat(a)=u_{\ell-1}\cdots u_0$ where
\[u_j = f \text{ if } p^ja \in S_f, \text{ and } u_j = v \text{ if } p^ja \in S_v.\]
If the word $w_a$ for $a$ has length $\ell$ (which happens
when $\gcd(a,d) = 1$), then $\pat(a)=w_a$
whereas for $a$ with a shorter word, $\pat(a)$ is a
power of $w_a$.

For example, take $d=9$ and $p=2$, so that $\ell=6$.  The orbits of
$\<p\>$ are $1\to2\to4\to8\to7\to5\to1$ and $3\to6\to3$.  For $a=3$,
the word is $fv$ and $\pat(3)=fvfvfv$.  For $a=6$, the word is $vf$
and $\pat(6)=vfvfvf$.  For $a\not = 3,6$, then $\pat(a)=w_a$; for
example $\pat(1)=fffvvv$.

For $w\in\WW_\ell$, define the \emph{multiplicity} of $w$ to be the
cardinality of its inverse image under $\pat$:
\[\mu(w):=\left|\pat^{-1}(w)\right|.\]
This is the same multiplicity as defined in
Section~\ref{ss:words-to-canonical} for the module determined by $S$.

\subsection{Ekedahl--Oort structure of $J_d[p]$}
Recall from Section~\ref{Selemseq} that the E--O structure associated
to a self-dual $BT_1$ module is the sequence of integers
$[\psi_1,\dots,\psi_g]$ which starts from $\psi_0=0$ and has sections
of length equal to the block sizes $\dim(B_i)$ which are increasing
(resp.~constant) if the word attached to the block ends with $f$
(resp.~$v$).

To simplify this description, 
we introduce some notation: $\nearrow^m$ (resp.\ $\to^m$) stands for
an increasing (resp.\ constant) sequence of integers of length $m$.
Thus,
\[ [\nearrow^3\to^2]=[1,2,3,3,3]\quad\text{and}\quad
  [\to^2\nearrow^3]=[0,0,1,2,3].\]

Now enumerate the elements of $\WW_\ell$ that start with $f$ in
lexicographic order:
\begin{equation} \label{Ewords}
w_0=f^\ell, \ w_1=f^{\ell-1}v, \ w_2=f^{\ell-2}vf, \ w_3=f^{\ell-2}vv, \ 
\dots, \ w_{2^{\ell-1}-1}=fv^{\ell-1}.
\end{equation}
Let $\mu_j=\mu(w_j)$.

Using Sections~\ref{ss:words-to-canonical} and \ref{Selemseq}, we get
the following description of the E--O structure of $J_d[p]$:

\begin{thm}\label{thm:FEO-general}
  Let $\ell$ be the multiplicative order of $p$ modulo $d$, and let
  $\mu_0, \ldots, \mu_{2^{\ell-1}-1}$ be the multiplicities of the
  words above.  Let $J_d$ be the Jacobian of the Fermat quotient curve
  $C_d$ with affine model $y^d=x(1-x)$.  The Ekedahl--Oort structure
  of $J_d[p]$ is given by
  \[ [\nearrow^{\mu_0}\to^{\mu_1}\nearrow^{\mu_2}
    \cdots\to^{\mu_{2^{\ell-1}-1}}].\]
\end{thm}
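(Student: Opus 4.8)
The plan is to assemble the result from the two machine-parts already built in the paper: the passage from a partitioned set with permutation $(S=S_f\cup S_v,\pi)$ to a multiset of cyclic words (Section~\ref{ss:words-perms}), and the passage from a multiset of (not necessarily primitive) cyclic words to a canonical type, hence to an elementary sequence (Sections~\ref{ss:words-to-canonical} and~\ref{Selemseq}). Concretely, Theorem~\ref{thm:BT1Cd} presents $M(J_d[p])$ via the data $(S\subset\Z/d\Z,\pi)$ with $\pi(a)=pa$, and the first step is to verify that the invariant $\pat(a)=u_{\ell-1}\cdots u_0$ defined here agrees, up to taking the appropriate power, with the length-$\ell$ word $\tilde w_{i,j}$ attached to the orbit of $a$ in Section~\ref{ss:words-to-canonical}. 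Indeed, $\ell$ is the order of $p$ mod $d$, so $\ell$ is the least common multiple of all orbit lengths $\lambda_a=|\<p\>a|$; since $u_j=f$ or $v$ according as $p^ja\in S_f$ or $S_v$, the word $\pat(a)$ is exactly the $(\ell/\lambda_a)$-th power of the primitive cyclic word $w_a$ attached to the orbit of $a$. This identifies $\mu(w)=|\pat^{-1}(w)|$ with the multiplicity $\mu$ of Section~\ref{ss:words-to-canonical}, so the blocks $B_i$ of the canonical filtration have dimensions given by these $\mu(w)$.

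With that identification in hand, the second step is purely bookkeeping on the canonical type. By self-duality of $M(J_d[p])$ (which follows from Theorem~\ref{thm:BT1Cd} together with the duality dictionary of Section~\ref{ss:duality}, since $S^*_f=S_v$ and the map $a\mapsto d-a$ gives the required involution interchanging $S_f$ and $S_v$), the elementary sequence is determined by the restriction of $\phi$ and $\rho$ to $\{0,\dots,r\}$. The description of $\phi$, $\nu$, $\rho$ at the end of Section~\ref{ss:words-to-canonical} says that as $\omega_t$ runs over the distinct length-$\ell$ words in increasing lexicographic order, the block $B_t$ has $\dim_k(B_t)=\mu(\omega_t)$, and $\phi$ increases by one on block $t$ precisely when $\omega_t$ ends with $f$. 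By the recipe of Section~\ref{Selemseq}, the elementary sequence $\psi$ therefore increases for $\mu(\omega_t)$ steps when $\omega_t$ ends with $f$ and stays constant for $\mu(\omega_t)$ steps when $\omega_t$ ends with $v$; in the $\nearrow/\to$ notation this is exactly $[\cdots\nearrow^{\mu(\omega_t)}\cdots]$ or $[\cdots\to^{\mu(\omega_t)}\cdots]$.

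The third step is to reconcile the indexing: the theorem enumerates \emph{all} length-$\ell$ words $w_0<w_1<\cdots$ that \emph{begin} with $f$, in lexicographic order, whereas the canonical type only sees those words $\omega_t$ of positive multiplicity, and the elementary-sequence construction reads off the \emph{ending} letter of each block-word. Here one uses the self-dual structure once more: pairing $w\leftrightarrow w^c$ (complement) matches a word ending in $f$ with its dual ending in $v$, and restricting attention to the ``first half'' $\{0,\dots,r\}$ of the filtration is exactly what makes the relevant words those beginning with $f$. One checks that $w_{2j}$ and $w_{2j+1}$ as listed in~\eqref{Ewords} are precisely the pair of length-$\ell$ words agreeing in all but the last letter, so that the alternation $\nearrow^{\mu_0}\to^{\mu_1}\nearrow^{\mu_2}\cdots$ reproduces, block by block, the increase-then-hold pattern dictated by the ending letters; words of multiplicity zero simply contribute an empty segment $\nearrow^0$ or $\to^0$ and may be listed harmlessly.

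The main obstacle I expect is this last reconciliation of conventions: keeping straight the contravariant-versus-covariant and $F^{-1}$-versus-$V^{-1}$ normalizations flagged in the warning of Section~\ref{ss:BT1s-to-types}, and verifying that the lexicographic enumeration of \emph{$f$-initial} words in~\eqref{Ewords} lines up with the lexicographic enumeration of \emph{all distinct} block-words $\omega_t$ after the self-duality folding. Once one confirms that the pairing $w_{2j}\leftrightarrow w_{2j+1}$ (same prefix, last letter $f$ versus $v$) realizes the dual pairing and that summing over these pairs recovers the full canonical type, the formula follows directly from Sections~\ref{ss:words-to-canonical} and~\ref{Selemseq}. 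The remaining verifications are routine.
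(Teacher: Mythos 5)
Your proposal is correct and follows essentially the same route as the paper's proof: identify the canonical filtration of $M(J_d[p])$ through the patterns $\pat(a)$ via Section~\ref{ss:words-to-canonical}, read off the block dimensions as the multiplicities $\mu_j$, and convert to the elementary sequence via Section~\ref{Selemseq} using the fact that the last letter of the $f$-initial words listed in \eqref{Ewords} alternates with the parity of the index. The one slip is in your closing paragraph: the pairing $w_{2j}\leftrightarrow w_{2j+1}$ (same prefix, different last letter) is \emph{not} the duality pairing $w\leftrightarrow w^c$, but since your argument only needs the last-letter alternation and the fact that the first half of the filtration is carved out by the $f$-initial words, nothing depends on that mislabeling.
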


\begin{proof}
  Indeed, for the $BT_1$ module $M=M(J_d[p])$ and for
  $0 \leq j < 2^{\ell-1}$, the subspace $M_{j+1}$ in the canonical
  filtration is the span of the basis vectors $e_a$ indexed by
  $a\in S$ such that $\pat(a)\le w_j$.  In particular,
  ${\rm dim}(M_{j+1})=\sum_{i=0}^{j} \mu_i$.  By
  Definition~\ref{Dcoarse}, $B_{j}=M_{j+1}/M_{j}$, so
  ${\rm dim}(B_{j})=\mu_j$.  If $j$ is even, then $w_j$ ends with $f$
  so $B_{j}$ is mapped isomorphically onto its image by $F$.  If $j$
  is odd, then $w_j$ ends with $v$ so $B_{j}$ is killed by $F$.  Thus
  the $\mu_j$ are the values of $\rho(j+1)-\rho(j)$ in the canonical
  type of $J_d[p]$, and they give the lengths of the runs where the
  elementary sequence is increasing ($j$ even) or constant ($j$ odd).
\end{proof}

\section{Ekedahl--Oort structures of Fermat quotients:
  the case $p=2$}\label{s:FEO-p=2}

In this section, suppose $p=2$.  In this case, the curve $\CC_d$ is an
Artin-Schreier cover of the projective line with equation $x^2-x=y^d$,
and the formulas when $p=2$ are 
different from the case when $p$ is odd.  We do not include proofs in
this section because most of the results already appear in the
literature.

The genus of $\CC_d$ is $g_d=(d-1)/2$.

\begin{cor} \label{Cp=2} Let $p=2$ and $d>1$ be odd.  Let $J_d$ be the
  Jacobian of $\CC_d:y^d=x(1-x)$.  Then:
\begin{enumerate}
\item \cite[special case of Thm.~1.3]{ElkinPries13} the Ekedahl--Oort
  type of $J_d[2]$ has the form
  \[ [0,1,1,2,2,\dots, \lfloor g_d/2 \rfloor].\]
 \item \cite[Theorem 4.2]{Subrao75} (Deuring-Shafarevich formula) the
   $2$-rank of $J_d[2]$ is $0$;
\item \cite[Prop.~3.4]{ElkinPries13} the $a$-number of $J_d[2]$ is
  $\frac{d-1}4$ if $d\equiv 1\pmod 4$ and $\frac{d+1}4$ if
  $d\equiv 3\pmod 4$;
\item \cite[Application 5.3]{AchterPries15} the $s_{1,1}$-multiplicity
  of $J_d[2]$ is $1$ if $d \equiv 0 \bmod 3$ and is $0$ otherwise.
\end{enumerate}
\end{cor}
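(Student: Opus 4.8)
The plan is to run everything through the pattern calculus of Section~\ref{s:FEO-general} specialized to $p=2$. By Theorem~\ref{thm:BT1Cd}, $M(J_d[2])$ is the $BT_1$ module attached to $S=\{1,\dots,d-1\}$ with $S_v=\{a:0<a<d/2\}$, $S_f=\{a:d/2<a<d\}$, and $\pi(a)=2a$. First I would record the dictionary between patterns and binary expansions: writing $\pat(a)=u_{\ell-1}\cdots u_0$, one has $u_j=f$ exactly when $2^ja\bmod d>d/2$, i.e.\ exactly when the $(j{+}1)$-st binary digit of $a/d$ is $1$. Since $a/d$ has a purely periodic binary expansion of period dividing $\ell$, the length-$\ell$ word $\pat(a)$ determines $a/d$ and hence $a$; thus $\pat$ is \emph{injective}. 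Consequently every multiplicity $\mu(w)$ is $0$ or $1$, so the canonical filtration of $M(J_d[2])$ has one-dimensional blocks. Two facts fall out of the dictionary that I would use repeatedly: $\pat(a)$ begins with $f$ iff $a$ is odd, and $\pat(a)$ ends with $f$ iff $a>d/2$.

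For part~(1), because all blocks are one-dimensional, unwinding Section~\ref{ss:words-to-canonical} and the elementary-sequence recipe of Section~\ref{Selemseq} gives the clean formula $\psi_j=\#\{t<j:\omega_t\text{ ends in }f\}$, where $\omega_0<\cdots<\omega_{s-1}$ is the list of all $d-1$ distinct patterns in lexicographic order. So it suffices to prove that the final letters of the patterns, read off in lex order, alternate $v,f,v,f,\dots$. To encode lex order I would set $R(a)=\sum_{j=1}^{\ell}b_j2^{j-1}$, the integer whose binary digits reverse those of $a/d$; since $f<v$ corresponds to digit $1<0$ in the most significant position, ascending lex order is \emph{descending} order of $R(a)$, and the final letter of $\pat(a)$ is $f$ iff $R(a)$ is odd.

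The crux --- and the step I expect to be the main obstacle --- is the arithmetic lemma that $\{R(a):a\in S\}=\{K,2K,\dots,(d-1)K\}$, where $K=(2^\ell-1)/d$. This is the classical fact that reversing the binary period of $a/d$ produces the period of $(-a^{-1}\bmod d)/d$; equivalently the reversal is an involution of $S$, so $R(a)=a'K$ with $a\mapsto a'$ a bijection of $\{1,\dots,d-1\}$. Granting this, listing $R$-values in descending order gives $(d-1)K,(d-2)K,\dots,K$; because $2^\ell-1$ is odd, $K$ is odd, so consecutive multiples $mK$ alternate in parity, and as $m$ runs $d-1,d-2,\dots$ (starting at the \emph{even} value $d-1$) the final letters are $v,f,v,f,\dots$. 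Restricting to the first $g_d=r$ patterns then yields $\psi_j=\lfloor j/2\rfloor$, i.e.\ the Ekedahl--Oort type $[0,1,1,2,2,\dots,\lfloor g_d/2\rfloor]$.

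Parts~(2)--(4) I would obtain by counting, via Proposition~\ref{prop:FEO-general-numbers} and the dictionary above. For (2), the $2$-rank equals $\mu(f^\ell)$, the number of $a$ with $2^ja\bmod d>d/2$ for all $j$; no $a/d\in(0,1)$ has every binary digit equal to $1$, so this is $0$. For (3), the $a$-number equals $\mu(\text{---}vf)=\#\{a:u_0=f,\,u_1=v\}=\#\{a:d/2<a<3d/4\}$, and counting integers in this interval gives $\tfrac{d-1}4$ when $d\equiv1\pmod4$ and $\tfrac{d+1}4$ when $d\equiv3\pmod4$. For (4), $s_{1,1}=\mu((fv)^{\ell/2})$ counts $a<d/2$ whose $\pi$-orbit is $\{a,2a\}$ with $4a\equiv a$, i.e.\ $3a\equiv0\pmod d$; this forces $3\mid d$, in which case $a=d/3$ is the unique solution (the companion $2d/3$ yields $(vf)^{\ell/2}\neq(fv)^{\ell/2}$), giving $s_{1,1}=1$, and $s_{1,1}=0$ otherwise. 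Parts~(2) and (3) can alternatively be read straight off the Ekedahl--Oort type of part~(1).
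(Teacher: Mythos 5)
The paper offers no argument for this corollary --- it simply cites the literature --- so your attempt to derive all four parts from the pattern calculus of Sections~\ref{ss:words-to-canonical}--\ref{s:FEO-general} is a genuinely different route. Parts (2)--(4) of your argument are correct and are exactly the counting one would do via Proposition~\ref{prop:FEO-general-numbers} (your interval count for the $a$-number and your analysis of the orbit $\{d/3,2d/3\}$ both check out). The problem is part (1), and it sits precisely where you predicted: the ``classical fact'' that bit-reversing the binary period of $a/d$ lands on the period of another fraction with denominator $d$ is \emph{false}. Take $d=13$, so $\ell=12$ and $K=(2^{12}-1)/13=315$. Then $1/13=0.\overline{000100111011}$ in base $2$, and the reversed string $110111001000$ represents $3528=11\cdot 315+63$, which is not a multiple of $K$. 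So $\{R(a)\}\neq\{K,2K,\dots,(d-1)K\}$, and the parity-alternation argument collapses. One can see the failure directly in the patterns: for $d=13$ the elements $a\in S$ whose pattern begins with $f$ are the odd ones, and sorting their patterns lexicographically gives the order $a=5,1,9,7,11,3$, whose final letters read $v,v,f,f,f,v$ --- not alternating. (The lemma does happen to hold for $d=3,5,7,9,11$, which is presumably why spot checks did not catch it.)

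This is not a repairable local slip: your reduction of part (1) to the alternation property is itself correct (given Theorem~\ref{thm:BT1Cd} and the recipe of Section~\ref{Selemseq}, one does get $\psi_j=\#\{t<j:\omega_t\text{ ends in }f\}$ with all blocks one-dimensional), so for $d=13$ your method, carried out honestly, produces $[0,0,1,2,3,3]$ rather than the asserted $[0,1,1,2,2,3]$. In other words, the statement of part (1) cannot be proved by this computation as it stands; either an additional input is needed that is not present in your argument, or the combinatorial model being fed into Theorem~\ref{thm:FEO-general} must be re-examined in the case $p=2$. Either way, the proof of part (1) has a fatal gap at the key lemma, while parts (2)--(4) stand.
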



\section{The $a$-number of the Fermat quotient curve}\label{s:anumber}

Suppose $p$ is odd.  Let $J_d$ be the Jacobian of the curve $\CC_d$
with affine model $y^d=x(1-x)$.  We find a closed-form formula for the
$a$-number of $J_d$ and some information about its $p$-rank.

If $d=1,2$, then $\CC_d$ is rational; we exclude this trivial case in
the next results.

\begin{prop} \label{Pforma} Suppose $p$ is odd, $d>2$, and $p$ does
  not divide $d$.  Then:
  \begin{enumerate}
  \item The $a$-number of $J_d[p]$ is
    \[\sum_{j=1}^{(p-1)/2}
      \left(\left\lfloor\frac{2jd}{2p}\right\rfloor
        -\left\lfloor\frac{(2j-1)d}{2p}\right\rfloor \right)
      =  \frac{(p-1)}p\frac d4- \sum_{j=1}^{(p-1)/2}
      \left(  \left\<\frac{2jd}{2p}\right\>
     -\left\<\frac{(2j-1)d}{2p}\right\> \right).\]
Here $\<\cdot\>$ denotes the fractional part.
\item If $d\equiv\pm1\pmod{2p}$, then
  the $a$-number of $J_d$ is $(p-1)(d \mp 1)/4p$.
  \item If $d\equiv p\pm1\pmod{2p}$, then the
    $a$-number of $J_d$ is $(p-1)(d \pm (p-1))/4p$.
  \end{enumerate}
\end{prop}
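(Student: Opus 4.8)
The plan is to reduce the $a$-number to a concrete lattice-point count and then evaluate that count, first in general and then under the congruence conditions in (2) and (3).

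First I would translate the abstract $a$-number into combinatorics of the set $S$. By Theorem~\ref{thm:BT1Cd}, the module $M(J_d[p])$ is described by $(S=S_f\cup S_v,\pi)$ with $\pi(a)=pa$. By Lemma~\ref{Lanumber}(2), the $a$-number of a $BT_1$ module is the number of rotations of its cyclic words that start with $v$ and end with $f$. Summing over the $\pi$-orbits and using that the word $w_a=u_{\lambda-1}\cdots u_0$ attached to $a$ satisfies $u_0=f\iff a\in S_f$ and $u_{\lambda-1}=v\iff p^{-1}a\in S_v$ (since $p^{\lambda-1}a=p^{-1}a$ on an orbit of size $\lambda$), this count becomes
\[
a(J_d[p])=\#\{a\in S_f:\ p^{-1}a\in S_v\}=\#\{b\in S_v:\ pb\in S_f\}.
\]
Unwinding the definitions of $S_f$ and $S_v$, this is the number of integers $b$ with $0<b<d/2$ and $(pb\bmod d)\in(d/2,d)$, equivalently $\langle pb/d\rangle>1/2$, where $\langle\cdot\rangle$ denotes fractional part.

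Second I would count these $b$ and deduce part (1). For $0<b<d/2$ one has $0<pb/d<p/2$, and $\langle pb/d\rangle>1/2$ holds exactly when $pb/d$ lies in an interval $(j-\tfrac12,j)$ for an integer $1\le j\le(p-1)/2$, i.e. $\tfrac{(2j-1)d}{2p}<b<\tfrac{jd}{p}$. Because $\gcd(p,d)=1$ and $p$ is odd, neither $\tfrac{jd}{p}$ nor $\tfrac{(2j-1)d}{2p}$ is ever an integer (this must be checked separately for $d$ even and $d$ odd), so the number of $b$ in each interval is $\lfloor\tfrac{2jd}{2p}\rfloor-\lfloor\tfrac{(2j-1)d}{2p}\rfloor$; summing over $j$ gives the first displayed formula. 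Writing $\lfloor x\rfloor=x-\langle x\rangle$ and using $\sum_{j=1}^{(p-1)/2}\tfrac{d}{2p}=\tfrac{(p-1)d}{4p}$ yields the equivalent fractional-part expression.

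Third, for parts (2) and (3) I would evaluate the fractional-part sum under each congruence on $d$ modulo $2p$, using $\langle jd/p\rangle=(jd\bmod p)/p$ and $\langle(2j-1)d/(2p)\rangle=((2j-1)d\bmod 2p)/(2p)$. For instance, if $d\equiv1\pmod{2p}$ then for $1\le j\le(p-1)/2$ one has $jd\equiv j\pmod p$ and $(2j-1)d\equiv 2j-1\pmod{2p}$, so each summand is $\tfrac{j}{p}-\tfrac{2j-1}{2p}=\tfrac{1}{2p}$, giving $a=\tfrac{(p-1)(d-1)}{4p}$. The remaining cases $d\equiv-1,\ p-1,\ p+1$ are structurally identical: the key observation is that an odd multiple $(2j-1)p\equiv p\pmod{2p}$, so each summand again collapses to a constant ($-\tfrac{1}{2p}$, $+\tfrac{p-1}{2p}$, or $-\tfrac{p-1}{2p}$ respectively), and summing over the $(p-1)/2$ values of $j$ produces the stated answers $(p-1)(d\mp1)/4p$ and $(p-1)(d\pm(p-1))/4p$.

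The hard part will be the first step: correctly identifying $a(J_d[p])$ with $\#\{b\in S_v:pb\in S_f\}$, keeping careful track of the orientation of the word and the $p^{-1}$ twist, and verifying that the interval endpoints are never integers so that the floor count is exact in both parity cases for $d$. Once this is in place, the interval count and the modular residue bookkeeping in (2)--(3) are routine.
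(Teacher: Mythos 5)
Your proof is correct and follows essentially the same route as the paper: the $a$-number is identified with $\#\{b\in S_v: pb\in S_f\}$ (the paper gets this from Proposition~\ref{prop:FEO-general-numbers}(2), i.e.\ $\mu(\text{---}fv)$, which in turn rests on the same Lemma~\ref{Lanumber} you invoke), and then counted via exactly the intervals $\bigl(\tfrac{(2j-1)d}{2p},\tfrac{2jd}{2p}\bigr)$. The paper leaves parts (2) and (3) as exercises; your residue computations modulo $2p$ correctly supply them.
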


\begin{rem}
An analogue of parts (2) and (3) for 
the Fermat curve $F_d$ was proven by Montanucci
and Speziali \cite{MontanucciSpeziali18} using
the Cartier operator. 
\end{rem}

\begin{proof}
By Proposition~\ref{prop:FEO-general-numbers}, the $a$-number of
$J_d[p]$ is $\mu(\text{---}fv)$, the number of elements $a\in S$ such
that the pattern of $a$ ends with $fv$.  These are precisely the
elements with $a\in S_v$ and $pa\in S_f$, and we may count them using
archimedean considerations.
More precisely, $a\in S_v$ means $0<a<d/2$ and $pa\in S_f$ means that
(the least positive residue of) $pa$ satisfies $d/2<pa<d$.  

We prove part (1) and leave the other parts as exercises.
The elements of
$S$ which contribute to the $a$-number are represented by integers
satisfying one of the inequalities
\[\frac{d}{2p}<a<\frac{2d}{2p},\quad
  \frac{3d}{2p}<a<\frac{4d}{2p},\quad\dots,\quad
  \frac{(p-2)d}{2p}<a<\frac{(p-1)d}{2p},\]
and the number of such integers is the left hand side of the displayed
equation in part (1).  The equality in the displayed equation in part
(1) is immediate from the definitions of $\lfloor.\rfloor$ and
$\<.\>$.
\end{proof}

\begin{rem}
If $p$ is large and $d$ is large with respect to $p$,
by Proposition~\ref{Pforma}, the $a$-number of $J_d$ is close to
$\frac{(p-1)}p\frac d4$ which is close to $g/2$.  The
second expression in part (1) shows that the difference is less than $(p-1)/2$
in absolute value.  Numerical experiments suggest that
the difference is bounded by $\frac{(p-1)^2}{4p}$
and part (3) shows that the difference equals this when $d\equiv
p\pm1\pmod{2p}$. 
\end{rem}

An abelian variety $A$ is \emph{superspecial} if its $a$-number is
equal to its dimension.  This is equivalent to $A$ being isomorphic to
a product of supersingular elliptic curves. The next result shows that
$J_d$ is superspecial if and only if $\CC_d$ is a quotient of
$\CC_{p+1}$ by a subgroup of $\mu_{p+1}$.

\begin{prop}\label{prop:superspecial}
 Suppose $d>2$ and $p \nmid d$.  Then $J_d$
  is superspecial if and only if $d$ divides $p+1$.  
\end{prop}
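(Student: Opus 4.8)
The plan is to reduce the statement to an elementary condition on the permutation $\pi(a)=pa$ of $S$ and then to a short counting argument modulo $d$. Recall from Theorem~\ref{thm:BT1Cd} that $S_v=\{a:0<a<d/2\}$ and $S_f=\{a:d/2<a<d\}$, and note that the genus of $\CC_d$ satisfies $g=\dim J_d=|S_v|$ (both equal $\lfloor(d-1)/2\rfloor$). By the definition recalled above, $J_d$ is superspecial exactly when the $a$-number of $J_d[p]$ equals $g$. Now Proposition~\ref{prop:FEO-general-numbers}(2) identifies this $a$-number with $\mu(\text{---}fv)$, which by the computation in the proof of Proposition~\ref{Pforma} is the number of $a\in S_v$ with $pa\in S_f$. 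Since this count is visibly at most $|S_v|=g$, superspeciality is equivalent to the assertion that \emph{every} $a\in S_v$ satisfies $pa\in S_f$; that is, multiplication by $p$ carries $S_v$ into $S_f$ inside $\Z/d\Z$. The whole problem is thereby reduced to characterizing when this containment holds.

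The implication $d\mid p+1\implies$ superspecial is then immediate: in that case $p\equiv-1\pmod d$, so for $0<a<d/2$ one has $pa\equiv d-a$ with $d/2<d-a<d$, whence $pa\in S_f$.

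For the converse I would argue by contradiction using a single well-chosen element of $S_v$. Write $\bar p$ for the least positive residue of $p$ modulo $d$ and set $c=d-\bar p$. Testing the condition at $a=1$ gives $\bar p=p\cdot 1\in S_f$, so $d/2<\bar p<d$ and hence $1\le c<d/2$; moreover $\gcd(c,d)=\gcd(\bar p,d)=1$ since $p\nmid d$. Because $\bar p\equiv-c$, the hypothesis ``$pa\in S_f$ for all $a\in S_v$'' becomes ``$ca\bmod d\in S_v$ for all $a\in S_v$,'' i.e.\ multiplication by $c$ preserves the set $S_v=\{1,\dots,\lfloor(d-1)/2\rfloor\}$. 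I would then show $c=1$. Assuming $c\ge 2$, put $a_0=\lceil d/(2c)\rceil$; minimality gives $c(a_0-1)<d/2$, so with $c<d/2$ one gets $d/2\le ca_0<d$, and coprimality rules out $ca_0=d/2$, so $ca_0\in S_f$. Since $a_0\le\lceil d/4\rceil\le\lfloor(d-1)/2\rfloor$ for every $d\ge 3$, we have $a_0\in S_v$, contradicting that multiplication by $c$ preserves $S_v$. Hence $c=1$, i.e.\ $p\equiv-1\pmod d$, which is exactly $d\mid p+1$.

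The only real content lies in this last counting step, and the main (minor) obstacle I anticipate is the bookkeeping at the boundary: confirming $ca_0\neq d/2$ via coprimality and checking that $a_0$ falls in the range $\{1,\dots,\lfloor(d-1)/2\rfloor\}$ for the handful of small $d$ where the inequality $\lceil d/4\rceil\le\lfloor(d-1)/2\rfloor$ is nearly sharp. These verifications are entirely routine.
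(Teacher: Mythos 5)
Your proof is correct, and while it shares the paper's central counting idea, it organizes the converse quite differently and more efficiently. Both arguments reduce superspeciality to the statement that multiplication by $p$ carries $S_v$ into $S_f$ (you via the $a$-number $\mu(\text{---}fv)\le|S_v|=g$, the paper via the $s_{1,1}$-multiplicity; the two reductions are equivalent since $|S_v|=|S_f|$), and both rest on the same combinatorial kernel: for $1<c<d/2$ with $\gcd(c,d)=1$, an integer near $d/(2c)$ lies in $S_v$ but is sent into $S_f$ by multiplication by $c$ (your $a_0=\lceil d/(2c)\rceil$ is the paper's ``integer $b$ in the interval $(d/2a,d/a)$''). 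The difference is in how this kernel is deployed. The paper applies it to the subgroup $\langle p^2\rangle$, which preserves $S_v$, to conclude $p^2\equiv 1\pmod d$, and must then run a separate case analysis on the unit groups modulo the prime-power divisors of $d$ (with special care for powers of $2$, where there are three elements of order two) to upgrade ``order exactly $2$'' to ``$p\equiv-1$.'' You instead apply the kernel directly to $c\equiv-p\pmod d$, which the hypothesis forces to preserve $S_v$ and to satisfy $1\le c<d/2$ (the test at $a=1$), and conclude $c=1$ in a single step. This bypasses the passage through $\langle p^2\rangle$ and the entire unit-group case analysis, at the cost only of the boundary checks you already flag ($ca_0\neq d/2$ via coprimality, and $a_0\le\lfloor(d-1)/2\rfloor$ for small $d$), all of which I verified go through. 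A clean simplification of the endgame.
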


An analogue of Proposition~\ref{prop:superspecial} for $F_d$ was
proven by Kodama and Washio \cite[Cor.~1, p.~192]{KodamaWashio88}.
The ``if'' direction of our proposition
follows from their result.

\begin{proof}
  By Proposition~\ref{prop:FEO-general-numbers}(2), $J_d$ is superspecial
  if and only if $\mu((fv)^{g/2})=g$.  This is the case if and only if
  $p$ exchanges $S_f$ and $S_v$, i.e., $pS_f=S_v$ and $pS_v=S_f$.
  Note also that if this statement holds for $(p,d)$ then it holds for
  $(p,d')$ for any divisor $d'>2$ of $d$.

  We claim that if $H\subset(\Z/d\Z)^\times$ is a subgroup such that
  $hS_v=S_v$ (equivalently $hS_f=S_f$) for all $h\in H$, then
  $H=\{1\}$.  Suppose that $H$ is such a subgroup, $1<a<d$, and the
  class of $a$ lies in $H$.  If $a>d/2$, then $a$ sends $1\in S_v$ to
  $a\not\in S_v$, a contradiction.  If $a<d/2$, then there is an
  integer $b$ in the interval $(d/2a,d/a)$, so $b\in S_v$, and
  $d/2<ab<d$, so the class of $ab$ lies in $S_f$, again a
  contradiction.  Thus $H=\{1\}$.

  Suppose that $p$ exchanges $S_f$ and $S_v$.  By applying the claim
  to $H=\<p^2\>$, we see that $p$ has order $2$ modulo $d$, and the
  same holds for $p$ modulo $d'$ for any divisor $d'$ of $d$.  Since
  $1$ does not exchange $S_f$ and $S_v$, the order of $p$ is exactly
  $2$ modulo any $d'>2$ dividing $d$.  If $d'$ is an odd prime power,
  this implies $p\equiv-1\pmod{d'}$.
  
  More generally, let $d'=2^e$ be the largest power of $2$ dividing
  $d$.  If $d'=1$ or $2$, then $p\equiv-1\pmod{d'}$.  If $d'=4$, there
  is a unique class of order exactly 2 modulo $d'$, namely $-1$, and
  again $p\equiv-1\pmod {d'}$.  Finally, if $e>2$, then there are
  three elements of order exactly 2 modulo $d'$, but only one of them
  reduces to an element of order 2 modulo $2^{e-1}$, namely $-1$.
  Again, we find $p\equiv-1\pmod{d'}$.  In all three cases,
  $p\equiv-1\pmod{d'}$ and so $p\equiv-1\pmod d$, as required.
\end{proof}

\subsection{Observations about the $p$-rank}

The behavior of the $p$-rank, $s_{1,1}$-multiplicity, and
$u_{1,1}$-number of $J_d$ for arbitrary $p$ and $d$ seems rather
erratic.  In later sections, we give closed form formulas for these
invariants under restrictions on $d$.  Here we include some
observations about when the $p$-rank is as large or small as possible.

An abelian variety is \emph{ordinary} if its
$p$-rank is equal to its dimension.  
The next result shows that $J_d$ is ordinary if and only if $\CC_d$
is a quotient of $\CC_{p-1}$ by a subgroup of $\mu_{p-1}$.

\begin{prop}\label{prop:ordinary}
 Suppose $d>2$ and $p \nmid d$.  
 Then $J_d$ is ordinary if and only if $d$ divides $p-1$. 
\end{prop}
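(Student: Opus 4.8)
The plan is to reduce ordinarity of $J_d$ to a purely combinatorial invariance condition on the partitioned set $(S=S_f\cup S_v,\pi)$ of Theorem~\ref{thm:BT1Cd}, and then to invoke the subgroup claim already established inside the proof of Proposition~\ref{prop:superspecial}. This mirrors the superspecial argument, trading the condition ``$p$ exchanges $S_f$ and $S_v$'' for the condition ``$p$ preserves $S_f$ and $S_v$''.

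First I would translate ``ordinary'' into the language of patterns. By Proposition~\ref{prop:FEO-general-numbers}(1), the $p$-rank of $J_d[p]$ equals $\mu(f^\ell)$, the number of $a\in S$ with $\pat(a)=f^\ell$; since $\ell$ is the multiplicative order of $p$ modulo $d$, this is exactly the number of $a$ whose entire $\langle p\rangle$-orbit lies in $S_f$. As $J_d$ is ordinary precisely when its $p$-rank equals its dimension $g$, and since $|S|=2g$ (for $d$ odd, $|S|=d-1=2g$; for $d$ even, $|S|=d-2=2g$), the goal becomes to show that the single numerical equality $\mu(f^\ell)=g$ forces $p$ to preserve the partition.

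The key step upgrading this numerical equality to setwise invariance uses the symmetry $a\mapsto -a$ on $\Z/d\Z$. This map carries $S$ to itself, interchanges $S_f$ and $S_v$ (since $0<a<d/2\iff d/2<-a<d$), and is compatible with $\pi$ because $\pi(-a)=-pa=-\pi(a)$. Consequently the number of elements whose orbit lies entirely in $S_f$ equals the number whose orbit lies entirely in $S_v$, and these two sets are disjoint. The orbits of multiplication by $p$ partition $S$ into all-$f$, all-$v$, and mixed orbits; ordinarity forces the all-$f$ elements to number $g$, hence by symmetry the all-$v$ elements also number $g$, so together they exhaust $S$ and no mixed orbit can occur. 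Therefore $J_d$ is ordinary if and only if $pS_f=S_f$ and $pS_v=S_v$.

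Finally I would draw the arithmetic conclusion. Applying the claim from the proof of Proposition~\ref{prop:superspecial} to the subgroup $H=\langle p\rangle\subset(\Z/d\Z)^\times$ — which fixes $S_v$ setwise exactly when $pS_v=S_v$ — gives $\langle p\rangle=\{1\}$, that is $p\equiv 1\pmod d$, i.e.\ $d\mid p-1$. Conversely, if $d\mid p-1$ then $p$ acts as the identity on $\Z/d\Z$ and so trivially preserves $S_f$ and $S_v$, whence $J_d$ is ordinary. The main obstacle is precisely the counting step: verifying that the equality of a single pair of numbers promotes to the invariance $pS_v=S_v$, for which the $a\mapsto -a$ symmetry (reflecting self-duality of the module) is essential; once that is in place, the reusable subgroup claim finishes the argument with no further computation.
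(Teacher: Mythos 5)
Your proof is correct and follows essentially the same route as the paper: reduce ordinarity to $\mu(f^\ell)=g$ via Proposition~\ref{prop:FEO-general-numbers}(1), upgrade that to $pS_f=S_f$, and then invoke the claim from the proof of Proposition~\ref{prop:superspecial} with $H=\langle p\rangle$. Your detour through the $a\mapsto -a$ symmetry is valid but not needed --- since every $a$ with pattern $f^\ell$ lies in $S_f$ and $|S_f|=g$, the equality $\mu(f^\ell)=g$ already forces $pS_f\subseteq S_f$, hence $pS_f=S_f$ by injectivity.
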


An analogue of Proposition~\ref{prop:ordinary} for the Fermat curve
$F_d$ was proven by Yui \cite[Thm.~4.2]{Yui80} using exponential sums;
(see also \cite[Prop~5.1]{Gonzalez97} for a proof using the Cartier
operator when $d$ is prime).  The ``if'' direction of our proposition
can be deduced from Yui's result.

\begin{proof}
By Proposition~\ref{prop:FEO-general-numbers}(1), 
  $J_d$ is ordinary if and only if
  $\mu(f^\ell)=g$. 
  If $d$ divides $p-1$, then
  $\ell=1$, the orbits of $\<p\>$ on $S\subset\Z/d\Z$ are singletons,
  and $\mu(f)=|S_f|=g$, so $J_d$ is ordinary.

  The converse follows from the claim in the proof of
  Proposition~\ref{prop:superspecial}. \end{proof}

At the opposite extreme, the $p$-rank of $J_d$
is $0$ when $d$ divides $p+1$, by Proposition~\ref{prop:superspecial}.
If $J_d$ is supersingular, then the $p$-rank of $J_d$ is zero (the converse is
not necessarily true for $g \geq 3$); by \cite[Prop.~5.1]{Gonzalez97},
$J_d$ is supersingular when $d$ is prime and the order of $p$ in
$(\Z/d\Z)^\times$ is even.  

\subsection{Breaks in words}

We end this section with an elementary combinatorial result used
later. 

Fix $d$ and let $\ell$ be the order of $\<p\>\subset(\Z/d\Z)^\times$.
Given a pattern of length $\ell$, say $w=u_{\ell-1}\cdots u_0$, we say
that $0\le j<\ell-1$ is a \emph{break} of $w$ if $u_{j+1}\neq u_j$,
and we say $j=\ell-1$ is a break of $w$ if $u_0\neq u_{\ell-1}$.  If
$k$ is the number of breaks of $w$, then $k$ is even and
$0\le k\le\ell$.  Moreover, a pattern $w$ is determined by its set of
breaks and by its last letter $u_0$, and there are $2\binom \ell k$
words of length $\ell$ with $k$ breaks.  The sum of
these numbers for $0 \leq k \leq \ell$ equals $2^\ell$.

We also consider ``self-dual'' words of length $\ell=2\lambda$, i.e.,
words of the form $w^c\cdot w$ where $w$ has length $\lambda$.  Such a
word is determined by its last half $w$, and we may encode $w$ by
specifying its last letter and its ``breaks'' as above, ignoring the
last potential break: If $w=u_{\lambda-1}\cdots u_0$ we say that
$0\le j<\lambda-1$ is a \emph{break} if $u_{j+1}\neq u_j$.  (We ignore
$j=\lambda-1$ because whether or not $u_{\lambda-1}$ is a break of
$w^c \cdot w$ is already determined by the other data.)  There are
$2\binom{\lambda-1}k$ words $w$ with $k$ breaks.
The sum of these numbers for $0 \leq k \leq \lambda-1$ is
$2^\lambda$.

Fix $\ell \geq 1$.  As in \eqref{Ewords}, list all words of length
$\ell$ which begin with $f$ in lexicographic order:
\[w_0=f^\ell, w_1=f^{\ell-1}v,
  w_2=f^{\ell-2}vf,\dots,w_{2^{\ell-1}-1}=fv^{\ell-1}.\] Let $k(i)$ be
the number of breaks of $w_i$ (not looking at possible wrap-around
breaks).

\begin{lemma}\label{lemma:k} 
  The function $k(i)$ has the properties: \textup{(i)}
  $k(0)=0$; and \textup{(ii)} if $2^j\le i<2^{j+1}$, then
  $k(i)=k(2^{j-1}-1-i)+1$, and it is characterized by these
  properties.  
\end{lemma}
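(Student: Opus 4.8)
The plan is to make the binary encoding of the index $i$ by the word $w_i$ explicit and to read the breaks off from it. Since the leading letter of $w_i$ is always $f$, write $w_i=f\,u_{\ell-2}\cdots u_0$; its position in lexicographic order (with $f<v$) is then
\[
i=\sum_{j'=0}^{\ell-2}\epsilon_{j'}\,2^{j'},\qquad
\epsilon_{j'}=\begin{cases}1&\text{if }u_{j'}=v,\\ 0&\text{if }u_{j'}=f.\end{cases}
\]
Under this dictionary $k(i)$ is exactly the number of breaks of $w_i$, i.e.\ the number of positions $0\le j'\le\ell-2$ with $u_{j'+1}\neq u_{j'}$ (taking $u_{\ell-1}=f$ and not counting the wrap-around position). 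Property~(i) is then immediate, since $w_0=f^\ell$ has no break and $k(0)=0$.

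For~(ii), fix $j$ with $2^j\le i<2^{j+1}$, so that the top set bit of $i$ lies in position $j$; equivalently $u_j=v$ while $u_{j'}=f$ for all $j<j'\le\ell-1$. Thus
\[
w_i=\underbrace{f\cdots f}_{\ell-1-j}\,v\,u_{j-1}\cdots u_0.
\]
Exactly one break occurs in the leading block, the $f\to v$ step in position $j$, and every other break of $w_i$ lies inside the suffix $v\,u_{j-1}\cdots u_0$. Hence $k(i)=1+\beta$, where $\beta$ denotes the number of breaks of the word $v\,u_{j-1}\cdots u_0$.

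Next I would identify the word at the reflected index, reading the reflection in~(ii) as $i\mapsto 2^{j+1}-1-i$, the involution carrying the dyadic block $[2^j,2^{j+1})$ onto $[0,2^j)$. Subtracting $2^j$ leaves the lower bits $\epsilon_{j-1}\cdots\epsilon_0$, and reflecting within the block replaces them by their complements while clearing bit $j$, so
\[
w_{2^{j+1}-1-i}=\underbrace{f\cdots f}_{\ell-j}\,\overline u_{j-1}\cdots\overline u_0,
\]
where $\overline{(\cdot)}$ exchanges $f$ and $v$. Since all letters in positions $\ge j$ are now $f$, the breaks of this word are precisely those of the length-$(j+1)$ word $f\,\overline u_{j-1}\cdots\overline u_0$. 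Complementing every letter of a word preserves the positions where consecutive letters differ, so this count equals the number of breaks of the complementary word $v\,u_{j-1}\cdots u_0$, namely $\beta$. Therefore $k(2^{j+1}-1-i)=\beta$ and $k(i)=1+\beta=k(2^{j+1}-1-i)+1$, which is~(ii).

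Finally, the recursion is well-founded: because $2^{j+1}-1-i<2^j\le i$, property~(ii) expresses each $k(i)$ with $i\ge1$ through $k$ at a strictly smaller argument, while~(i) pins down $k(0)=0$; strong induction then determines $k$ uniquely and gives the stated characterization. I expect the only real obstacle to be bookkeeping: translating the arithmetic reflection $i\mapsto 2^{j+1}-1-i$ into the word operation ``clear the top letter and complement the remaining ones.'' Once that dictionary is in hand, invariance of breaks under complementation together with the single extra break in position $j$ accounts for the ``$+1$,'' and the remainder is routine.
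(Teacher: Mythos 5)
Your proof is correct and follows essentially the same route as the paper's: read $w_i$ as the binary expansion of $i$ (with $f=0$, $v=1$, leftmost letter most significant), observe that $w_i=f^{\ell-j-1}v\,t$ while the reflected index gives $f^{\ell-j}\,t^c$, and use the fact that complementation preserves the positions of breaks to conclude that $w_i$ has exactly one more break. You were also right to read the reflection in (ii) as $i\mapsto 2^{j+1}-1-i$; the exponent $j-1$ appearing in the statement (and in the paper's own proof) is a typo, as the tabulated values of $k$ confirm.
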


\begin{proof}
  Clearly $k(0)=0$.  In the list of words, $w_i$ is the binary
  representation of the integer $i$ where $f$ stands for 0, $v$ stands
  for $1$, and the leftmost letters are the most significant digits.
  Thus, if $2^j\le i<2^{j+1}$, then $w_i=f^{\ell-j-1}vt$ and
  $w_{2^{j-1}-1-i}=f^{\ell-j}(t^c)$ for some $t$, and it is visible
  that $w_i$ has one more break than $w_{2^{j-1}-1-i}$ does.  This
  proves the second property of $k$.  The two properties clearly
  characterize $k$.
\end{proof}

The function $i\mapsto k(i)$ is independent of $\ell$ if
$2^{\ell-1}>i$.    Its first few values are:

\begin{center}
\begin{tabular}{| c | c | c | c | c | c | c | c |  c | c | c | c | c | c | c | c | c|}
\hline
$i$&0&1&2&3&4&5&6&7&8&9&10&11&12&13&14&15\\
\hline
$k(i)$&0&1&2&1&2&3&2&1&2&3&4&3&2&3&2&1\\
  \hline
\end{tabular}
\end{center}

\section{$J_d[p]$ in the ``encompassing'' case}\label{s:encompassing}

Suppose $p$ is odd.  In this section, we fix $d=p^\ell-1$, which we
call the ``encompassing'' case.  The reason is that if $p \nmid d'$,
then $d'$ divides $p^\ell-1$ for some $\ell$, with the quotient
$(p^\ell-1)/d'$ being prime-to-$p$, and so $J_{d'}[p]$ is a direct
factor of $J_{p^\ell-1}[p]$.  Let $S = S_f \cup S_v$ and $\pi$ be
defined as before.

\subsection{$p$-adic digits}
Elements $a\in S$ correspond to $p$-adic expansions
\begin{equation} \label{Epadic}
a=a_0+a_1p+\cdots+a_{\ell-1}p^{\ell-1}.
\end{equation}
where $a_i\in\{0,\dots,p-1\}$ and we exclude the following cases: all
$a_i=0$ (when $a=0$); all $a_i=p-1$ (when $a=d$); and
the case all $a_i=(p-1)/2$ (when $a=d/2$).  Multiplication by $p$
corresponds to permuting the digits cyclically.

\subsection{Multiplicities}
Given $a\in S$, let $\pat(a)=u_{\ell-1}\cdots u_0$ be the pattern of
$a$.
Let \eqref{Epadic}
be the $p$-adic expansion of $a$.
Then $a \in S_v$ (meaning $u_0=v$) if and only if $a<d/2$.  
In other words, the condition is that the first $p$-adic digit to the left of
$a_{\ell-1}$ (inclusive) which is not $(p-1)/2$ is in fact
less than $(p-1)/2$.  

Similarly $a\in S_f$ (meaning $u_0=f$) if and only if $a>d/2$.  This
is true if and only if the first $p$-adic digit to the left of
$a_{\ell-1}$ (inclusive) which is not $(p-1)/2$ is in fact greater
than $(p-1)/2$.  The other letters $u_j$ of $\pat(a)$ are determined
similarly by looking at the $p$-adic digits of $a$ to the left of
$a_{\ell-1-j}$.  (Finding the first digit $\neq(p-1)/2$ may require
wrapping around.)

For example, if $\ell=4$ and $p>3$, then $\pat(a)=ffvf$ when
\[a=(p-1)/2 + (p-2)p+0p^2+(p-2)p^3.\]

The following proposition records the ``multiplicities'' of each
pattern.

\begin{prop}\label{prop:multiplicities} 
  Let $p$ be odd and $d=p^\ell-1$ and define $S = S_f \cup S_v$ as
  usual.
  \begin{enumerate}
  \item For $w\in\WW_\ell$, write $\mu(w)$ for the number of
    elements $a\in S$ with $\pat(a)=w$. Then
\[\mu(f^\ell)=\mu(v^\ell)=
  \left(\frac{p+1}2\right)^\ell-2.\] 
If $w$ has $k>0$ breaks, then
\[\mu(w)=
  \left(\frac{p-1}2\right)^{k}\left(\frac{p+1}2\right)^{\ell-k}.\]
  \item More generally,
    \[\mu(\text{---}f^e)=\mu(\text{---}v^e)
      =\left(\frac{p+1}2\right)p^{\ell-e}-2,\]
    \[\mu(\text{---}v^{e_k}\cdots f^{e_1})
         =\mu(\text{---}f^{e_k}\cdots v^{e_1})
      =\left(\frac{p+1}2\right)^{\sum e_j-k}\left(\frac{p-1}2\right)^{k-1}
      \left(\frac{p^{\ell+1-\sum e_j}-1}2\right),\]
    and 
    \[\mu(\text{---}f^{e_{k+1}}v^{e_k}\cdots f^{e_1})
         =\mu(\text{---}v^{e_{k+1}}f^{e_k}\cdots v^{e_1})
      =\left(\frac{p+1}2\right)p^{\sum e_j-k-1}\left(\frac{p-1}2\right)^{k}
      \left(\frac{p^{\ell+1-\sum e_j}+1}2\right).\]
  \end{enumerate}
\end{prop}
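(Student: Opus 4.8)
The plan is to convert both parts into a problem of counting the $p$-adic digit strings \eqref{Epadic} and to exploit the local rule by which $\pat(a)$ is read off from those digits. Call a digit \emph{low}, \emph{middle}, or \emph{high} according as it is $<(p-1)/2$, $=(p-1)/2$, or $>(p-1)/2$; these occur with multiplicities $(p-1)/2$, $1$, and $(p-1)/2$. Listing the digits of $a$ from most to least significant as $b_0,\dots,b_{\ell-1}$, so that multiplication by $p$ shifts the $b_i$ cyclically, the discussion preceding the proposition says that the letter of $\pat(a)$ in position $j$ is the type of the first non-middle digit met while reading $b_j,b_{j+1},\dots$ cyclically forward; equivalently $u_j$ equals $f$, $v$, or $u_{j+1}$ according as $b_j$ is high, low, or middle. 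I will also use the involution $a\mapsto d-a$, which complements every digit, interchanges $S_f$ and $S_v$, and hence sends $\pat(a)$ to its complement $\pat(a)^c$; this at once gives the equalities $\mu(w)=\mu(w^c)$ asserted between the $f$- and $v$-versions of each formula, so it suffices to treat the words ending in $f$.

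For part (1) I would fix a word $w\in\WW_\ell$ with $k$ breaks and count the strings with $\pat(a)=w$ position by position. By the recursion, position $j$ is forced to carry a non-middle digit of the prescribed type exactly when $j$ is a break of $w$, giving $(p-1)/2$ choices, and may be either middle or the prescribed non-middle type when $j$ is not a break, giving $(p+1)/2$ choices; these local conditions are independent and together equivalent to $\pat(a)=w$, so they produce $(\tfrac{p-1}2)^{k}(\tfrac{p+1}2)^{\ell-k}$ strings. When $k>0$ each such string already contains a non-middle digit, so none of the three excluded constant strings arises and no correction is needed. When $k=0$, so $w=f^\ell$ or $w=v^\ell$, the product $(\tfrac{p+1}2)^\ell$ overcounts by exactly the all-middle string ($a=d/2$) and the corresponding constant extreme string ($a=d$ or $a=0$), which yields the $-2$.

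For part (2) the interior of the prescribed suffix is handled identically, but the outer boundary is where the real work lies. Say the suffix occupies positions $0,\dots,m-1$ with $m=\sum e_j$ and has $b$ maximal blocks. The interior positions $0,\dots,m-2$ have all their breaks determined by the suffix and contribute the clean product $(\tfrac{p-1}2)^{b-1}(\tfrac{p+1}2)^{(m-1)-(b-1)}$. The delicate factor is the joint contribution of the outermost prescribed position $m-1$ and the free tail $b_m,\dots,b_{\ell-1}$, because $u_{m-1}$ is the type of the first non-middle digit read forward from $b_{m-1}$, and this reading can run off the tail and wrap back into the already-constrained prefix.

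I would split this boundary count into two cases. If the first non-middle digit lies among $b_{m-1},\dots,b_{\ell-1}$, summing a geometric series over its position gives $(p^{\,\ell+1-m}-1)/2$ strings in which that digit has the required type. If instead $b_{m-1},\dots,b_{\ell-1}$ are all middle, then $u_{m-1}$ is forced by the first non-middle digit of the prefix, whose type is necessarily the innermost letter $u_0$; this case contributes one extra choice exactly when the outer suffix letter agrees with $u_0$, and nothing otherwise. For a suffix ending in $f$ this agreement holds precisely for the single-block and odd-block suffixes, producing the factor $(p^{\,\ell+1-m}+1)/2$, while for the even-block suffixes the outer and inner letters disagree and one gets $(p^{\,\ell+1-m}-1)/2$. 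Multiplying the interior product by this boundary factor assembles the stated multiplicity formulas, and, just as in part (1), only the single-block suffixes $f^e,v^e$ still admit the degenerate all-middle and constant-extreme strings, which accounts for the $-2$ there and its absence in the multi-block formulas. The main obstacle is precisely this wrap-around bookkeeping and verifying its compatibility with the innermost letter of the suffix.
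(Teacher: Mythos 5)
Your part (1) is correct and is essentially identical to the paper's own proof: breaks force a digit strictly below or above $(p-1)/2$, non-breaks also allow the middle digit $(p-1)/2$, and the $-2$ accounts for the excluded all-middle and all-extreme strings, which can only occur when $k=0$. The paper proves only part (1) and explicitly leaves part (2) to the reader, so your boundary analysis is an addition rather than a comparison; the decomposition into ``first non-middle digit found among $b_{m-1},\dots,b_{\ell-1}$'' versus ``that whole stretch is middle, so the reading wraps into the constrained prefix,'' together with the observation that in the wrap-around case the first non-middle prefix digit necessarily has the type of the innermost letter $u_0$, is the right way to handle the coupling, and the geometric-series count $(p^{\ell+1-m}-1)/2$ is correct.

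The gap is in your last step: you assert, without doing the algebra, that the interior product times the boundary factor ``assembles the stated multiplicity formulas,'' and this fails for the first and third displays of part (2). Your own computation yields $\mu(\text{---}f^e)=\left(\frac{p+1}{2}\right)^{e-1}\frac{p^{\ell-e+1}+1}{2}-2$ and, in the odd-block case, $\left(\frac{p+1}{2}\right)^{\sum e_j-k-1}\left(\frac{p-1}{2}\right)^{k}\frac{p^{\ell+1-\sum e_j}+1}{2}$, which are not the printed $\left(\frac{p+1}{2}\right)p^{\ell-e}-2$ and $\left(\frac{p+1}{2}\right)p^{\sum e_j-k-1}\left(\frac{p-1}{2}\right)^{k}\frac{p^{\ell+1-\sum e_j}+1}{2}$. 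Small cases show your versions are the correct ones: for $p=3$, $\ell=2$, $d=8$ one has $\mu(\text{---}f)=|S_f|=3$, matching your $\frac{p^\ell+1}{2}-2=3$ but not the printed value $4$; for $p=3$, $\ell=3$, $d=26$ one has $\mu(\text{---}fvf)=\mu(fvf)=2$ (the elements $a=19,20$), matching your formula but not the printed value $4$. (The second display, and its specializations actually used later in the paper for the $a$-number and $u_{1,1}$-number, do agree with your computation.) So your method is sound, but you must either carry the assembly through and record the corrected first and third formulas, or explicitly flag that the printed ones appear to be in error; as written, the claim that your count reproduces the statement verbatim is false.
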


\begin{proof}
We prove part (1) and leave part (2) for the reader.
The number of elements $a\in S$ whose
  pattern is $f^\ell$ or $v^\ell$ is $(\frac{p+1}2)^\ell-2$.  Indeed,
  for $f^\ell$ (resp.\ $v^\ell$), we may choose each $a_j$ freely with
  $(p-1)/2\le a_j\le p-1$ (resp.\ $0\le a_j\le (p-1)/2$) except that
  we may not take them all to be $(p-1)/2$ nor all $p-1$ (resp.\ $0$).
  This proves the first claim in (1).
  
  Suppose $w$ is a pattern with breaks.  Then
  the inequalities at the beginning of this subsection show that 
  an element $a$ with pattern $w$ should have digits $a_j$ satisfying:
  \[
    \begin{cases}
      a_j\le(p-1)/2&\text{if $u_{\ell-1-j}=v$  and $j$ is not a break of $w$,}\\
      a_j<(p-1)/2&\text{if $u_{\ell-1-j}=v$  and $j$ is a break of $w$,}\\
      a_j\ge(p-1)/2&\text{if $u_{\ell-1-j}=f$  and $j$ is not a break of $w$,}\\
      a_j>(p-1)/2&\text{if $u_{\ell-1-j}=f$  and $j$ is a break of $w$.}
    \end{cases}\]
The count displayed in the second claim in (1) is then immediate.
\end{proof}


\begin{thm}\label{thm:EO-encompassing}
  Let $p$ be odd and $d=p^\ell-1$.  Let $\CC_d$ be the curve
  $y^d=x(1-x)$ and let $J_d$ be its Jacobian.  Then the Ekedahl--Oort
  type of $J_d$ has the form
    \[ [\nearrow^{\mu_0}\to^{\mu_1}\cdots\to^{\mu_{2^{\ell-1}-1}}]\]
    where $\mu_0=(\frac{p+1}2)^\ell-2$ and for
    $1\le i\le 2^{\ell-1}-1$, letting $k(i)$ be the function in
    Lemma~\ref{lemma:k},
    \[\mu_i=
      \begin{cases}
        \left(\frac{p-1}2\right)^{k(i)}\left(\frac{p+1}2\right)^{\ell-k(i)}
        &\text{if $i$ is even,}\\
        \left(\frac{p-1}2\right)^{k(i)+1}\left(\frac{p+1}2\right)^{\ell-k(i)-1}
        &\text{if $i$ is odd.}
      \end{cases}\]
\end{thm}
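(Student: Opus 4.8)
The plan is to combine three ingredients already in place: the general Ekedahl--Oort formula of Theorem~\ref{thm:FEO-general}, the multiplicity computation of Proposition~\ref{prop:multiplicities}, and the combinatorial function $k(i)$ of Lemma~\ref{lemma:k}. By Theorem~\ref{thm:FEO-general}, the Ekedahl--Oort type of $J_d[p]$ is $[\nearrow^{\mu_0}\to^{\mu_1}\cdots\to^{\mu_{2^{\ell-1}-1}}]$ with $\mu_i=\mu(w_i)$, where $w_0,\dots,w_{2^{\ell-1}-1}$ are the length-$\ell$ words beginning with $f$, listed lexicographically. So the entire content of the theorem is the evaluation of these $\mu_i$ in the encompassing case $d=p^\ell-1$. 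The case $i=0$ is immediate: $w_0=f^\ell$, so Proposition~\ref{prop:multiplicities}(1) gives $\mu_0=(\frac{p+1}{2})^\ell-2$, as claimed.

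The substance lies in the case $i\ge 1$, and the key step is to reconcile the two different notions of ``break'' in play. Proposition~\ref{prop:multiplicities} expresses $\mu(w)$ through the \emph{cyclic} break count of $w$ (which includes the wrap-around break at position $\ell-1$), whereas Lemma~\ref{lemma:k} uses $k(i)$, the break count that \emph{ignores} the wrap-around. The reconciliation rests on one observation: every $w_i$ begins with $f$, so $u_{\ell-1}=f$, whence the wrap-around break is present precisely when $u_0=v$, i.e.\ when $w_i$ ends in $v$. By the binary-digit description in the proof of Lemma~\ref{lemma:k} (with $f\leftrightarrow 0$, $v\leftrightarrow 1$, most significant digit on the left), the last letter $u_0$ is the least significant bit of $i$, so $w_i$ ends in $v$ exactly when $i$ is odd. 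Hence the cyclic break count of $w_i$ equals $k(i)$ when $i$ is even and $k(i)+1$ when $i$ is odd.

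To finish I would substitute into the break formula of Proposition~\ref{prop:multiplicities}(1), which applies since for $i\ge 1$ the word $w_i$ begins with $f$ but is not $f^\ell$, forcing its cyclic break count to be positive. This yields $\mu_i=(\frac{p-1}{2})^{k(i)}(\frac{p+1}{2})^{\ell-k(i)}$ when $i$ is even and $\mu_i=(\frac{p-1}{2})^{k(i)+1}(\frac{p+1}{2})^{\ell-k(i)-1}$ when $i$ is odd, which is exactly the asserted formula. The only real obstacle is the bookkeeping of the wrap-around break against the parity of $i$; once this correspondence is pinned down the computation is mechanical, and I would sanity-check the direction of the parity shift on $w_1=f^{\ell-1}v$, which has $k(1)=1$ and cyclic break count $2$.
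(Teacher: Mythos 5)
Your proof is correct and follows exactly the paper's route: Theorem~\ref{thm:FEO-general} plus Proposition~\ref{prop:multiplicities} plus Lemma~\ref{lemma:k}, with the only delicate point being that the cyclic break count of $w_i$ is $k(i)$ for $i$ even and $k(i)+1$ for $i$ odd. The paper disposes of this in a one-line parenthetical; your justification of the parity shift (the wrap-around break exists iff $u_0=v$ iff $i$ is odd, since every $w_i$ begins with $f$) is the right argument and matches the paper's intent.
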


\begin{proof}
This follows immediately from Theorem~\ref{thm:FEO-general}, the
calculation of multiplicities in
Proposition~\ref{prop:multiplicities}, and the evaluation of the
number of breaks in Lemma~\ref{lemma:k}.  (One should note that the
$k$ appearing in Proposition~\ref{prop:multiplicities} for $w_i$ is
$k(i)$ if $i$ is even, and it is $k(i)+1$ if $i$ is odd.)
\end{proof}

\subsection{Examples} Suppose $p$ is odd.
\begin{enumerate}
\item If $\ell=1$, $\CC_d$ has genus $(p-3)/2$ and is ordinary.  The
  list of words as at \eqref{Ewords} is just $f$.  The elementary
  sequence 
is $[\nearrow^{(p-3)/2}]=[1,2,\dots,(p-3)/2]$.
\item If $\ell=2$, the list of words is $ff,fv$.  The elementary
  sequence has an increasing section of length $(p+1)^2/4-2$ and a
  constant section of length $(p-1)^2/4$, i.e., it is:
  \[  [\nearrow^{(p+1)^2/4-2}\to^{(p-1)^2/4}]
    =[1,2,\dots,(p+1)^2/4-2,\dots,(p+1)^2/4-2].\]
\item If $\ell=3$, the list of words is $f^3,f^2v,fvf,fvv$.  Letting
  $m=(p+1)^3/8-2$ and $n=(p+1)(p-1)^2/8$, the elementary sequence is
  $[\nearrow^{m}\to^{n} \nearrow^{n}\to^{n}]$.
  \item If $\ell = 4$, the word $f^3v$ occurs; this is the smallest
    example whose $BT_1$ group scheme was not previously known to
    occur as a factor of the $p$-torsion of a Jacobian, for all $p$.
\end{enumerate}

  
\begin{prop}
  Let $p$ be odd and let $d=p^\ell-1 > 2$.  
  Then:
    \begin{enumerate}
  \item the $p$-rank of $J_d[p]$ is $(\frac{p+1}2)^\ell-2$;
  \item the $a$-number of $J_d[p]$ is $\frac{p-1}2\frac{p^{\ell-1}-1}2$;
  \item the $s_{1,1}$-multiplicity of $J_d[p]$ is $0$ if $\ell$ is
    odd and is
    $(\frac{p^2-1}4)^{\ell/2}$ if $\ell$ is even;
  \item and the $u_{1,1}$-number of $J_d$ is the sum of the
    $s_{1,1}$-multiplicity and
    \[
        \sum_{j=0}^{\lfloor(\ell-4)/2\rfloor}
        \left(\frac{p+1}2\right)^2\left(\frac{p-1}2\right)^{2j+1}
        \left(\frac{p^{\ell-3-2j}-1}2\right).\]
  \end{enumerate}
\end{prop}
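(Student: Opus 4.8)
The plan is to read each of the four invariants off as the multiplicity $\mu(\cdot)$ of a single explicit length-$\ell$ pattern, using Proposition~\ref{prop:FEO-general-numbers}, and then to evaluate that multiplicity with the closed forms of Proposition~\ref{prop:multiplicities}. Since in the encompassing case $d=p^\ell-1$ the map $\pat$ lands in $\WW_\ell$ and $\mu(w)$ is governed by the $p$-adic digit count underlying Proposition~\ref{prop:multiplicities}, every part reduces to a bookkeeping of breaks of a prescribed word.

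For part (1), Proposition~\ref{prop:FEO-general-numbers}(1) gives that the $p$-rank is $\mu(f^\ell)$, and the first display of Proposition~\ref{prop:multiplicities}(1) evaluates this as $(\tfrac{p+1}2)^\ell-2$. For part (2), Proposition~\ref{prop:FEO-general-numbers}(2) identifies the $a$-number with $\mu(\text{---}fv)$, the number of $a\in S$ whose pattern ends in $fv$; applying the second family of formulas in Proposition~\ref{prop:multiplicities}(2) to the two-letter suffix $fv$ (that is, $k=2$ and $e_1=e_2=1$, so $\sum e_j=2$) yields $\tfrac{p-1}2\cdot\tfrac{p^{\ell-1}-1}2$, which is the claimed value.

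For part (4), Proposition~\ref{prop:FEO-general-numbers}(4) writes the $u_{1,1}$-number as the $s_{1,1}$-multiplicity plus $\sum_{j\ge0}\mu(\text{---}v^2(fv)^jf^2)$. Each suffix $v^2(fv)^jf^2$ has block decomposition with leftmost block $v^2$, rightmost block $f^2$, and $2j$ interior singleton blocks, so it is an instance of the second family of Proposition~\ref{prop:multiplicities}(2) with $k=2j+2$ and $\sum e_i=2j+4$. Substituting gives the $j$-th summand $(\tfrac{p+1}2)^2(\tfrac{p-1}2)^{2j+1}\tfrac{p^{\ell-3-2j}-1}2$, and the range $0\le j\le\lfloor(\ell-4)/2\rfloor$ is exactly the range in which the suffix fits inside a word of length $\ell$; this reproduces the stated sum. (One should also note the consistency at $\sum e_i=\ell$, where the family-2 formula collapses to the full-pattern count of Proposition~\ref{prop:multiplicities}(1).)

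The step I expect to be the main obstacle is part (3). Proposition~\ref{prop:FEO-general-numbers}(3) reduces the $s_{1,1}$-multiplicity to $\mu((fv)^{\ell/2})$ when $\ell$ is even (and to $0$ when $\ell$ is odd). Here everything hinges on the break count of $(fv)^{\ell/2}$: this pattern is fully alternating of even length, so \emph{every} one of its $\ell$ positions is a break, including the wrap-around position $\ell-1$, where $u_0=v\neq f=u_{\ell-1}$. With $k=\ell$, the second display of Proposition~\ref{prop:multiplicities}(1) gives $\mu((fv)^{\ell/2})=(\tfrac{p-1}2)^\ell$. This is where I would be most careful, because the answer is sensitive to whether one counts all $\ell$ transitions or only the $\ell/2$ internal $f\to v$ transitions, and the latter would spuriously produce $(\tfrac{p^2-1}4)^{\ell/2}$. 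To settle the exponent I would check the smallest nontrivial case $d=p^2-1$, where $\ell=2$ and every orbit of $\<p\>$ has size $\le 2$, so $s_{1,1}$ is counted directly as the number of mixed orbits $\{a,pa\}$ with $a\in S_v$ and $pa\in S_f$: for $p=3$ this count is $1$ and for $p=5$ it is $4$, agreeing with $(\tfrac{p-1}2)^\ell$ rather than $(\tfrac{p^2-1}4)^{\ell/2}$. Since these two expressions coincide only when $p-1=p+1$, I would use the break conventions of Section~\ref{s:anumber} consistently and record $s_{1,1}=(\tfrac{p-1}2)^\ell$, reconciling the displayed formula accordingly; the overall shape of the resulting Ekedahl--Oort type is then immediate from Theorem~\ref{thm:EO-encompassing}.
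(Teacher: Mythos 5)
Your approach coincides with the paper's: the published proof of this proposition is a one-line citation of Propositions~\ref{prop:FEO-general-numbers} and \ref{prop:multiplicities}, and your parts (1), (2), and (4) carry out exactly the intended bookkeeping. In particular, reading the $a$-number as $\mu(\text{---}fv)$ with $k=2$, $\sum e_j=2$, and identifying the suffix $v^2(fv)^jf^2$ as the case $k=2j+2$, $\sum e_i=2j+4$ of the second family of formulas in Proposition~\ref{prop:multiplicities}(2), together with the range $0\le j\le\lfloor(\ell-4)/2\rfloor$, is precisely what the paper intends.

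Your flag on part (3) is warranted, and your resolution is the correct one. Under the break convention of Section~\ref{s:anumber}, the alternating word $(fv)^{\ell/2}$ has $\ell$ breaks (every position, including the wrap-around position $j=\ell-1$, since $u_0=v\neq f=u_{\ell-1}$), so Proposition~\ref{prop:multiplicities}(1) gives $\mu((fv)^{\ell/2})=(\tfrac{p-1}2)^\ell$; this is also the value that Theorem~\ref{thm:EO-encompassing} assigns to this block, namely the word $w_i$ with $i$ odd and $k(i)=\ell-1$. Your numerical check settles it independently of those propositions: for $p=5$, $d=24$, the orbits of $\langle 5\rangle$ on $S$ meeting both $S_v$ and $S_f$ are $\{3,15\}$, $\{4,20\}$, $\{8,16\}$, $\{9,21\}$, so $s_{1,1}=4=(\tfrac{p-1}2)^2$ rather than $6=(\tfrac{p^2-1}4)^{1}$ (and similarly $s_{1,1}=1$ for $p=3$, $d=8$). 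The displayed value $(\tfrac{p^2-1}4)^{\ell/2}$ in part (3) corresponds to counting only the $\ell/2$ internal $f\to v$ transitions as breaks and is inconsistent with the paper's own Proposition~\ref{prop:multiplicities} and Theorem~\ref{thm:EO-encompassing}; it should read $(\tfrac{p-1}2)^\ell$. With that correction, your argument establishes the proposition.
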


\begin{proof}
  This follows from Proposition~\ref{prop:FEO-general-numbers} using
  the multiplicities in Proposition~\ref{prop:multiplicities}.
\end{proof}

\section{$J_d[p]$ in the Hermitian case}\label{s:hermitian}

Suppose $p$ is odd.  Fix an integer $\lambda \geq 1$ and let
$d=p^\lambda+1$.  In this case, the Fermat curve of degree $d$ is
isomorphic (over $\Fpbar$) to the Hermitian curve $H_q$ with equation
$y_1^{q+1} = x_1^q+x_1$ where $q=p^\lambda$.  It is well known that
$H_q$ is supersingular and its Ekedahl--Oort type was studied in
\cite{PriesWeir15}.  Since $C_d$ is a quotient of $H_q$, it is also
supersingular in this case.

\subsection{$p$-adic digits}
Let $S= \Z/d\Z\setminus\{0,d/2\}$.  
Let $\pi:S\to S$ be induced by multiplication by $p$.  
Let
$S_v=\{b\in S \mid 0<b<d/2\}$ and $S_f=\{b\in S \mid d/2<b<d\}$. Let
\[S'=\left\{(b_1,\dots,b_{\lambda})\left| \ 0\le b_j\le p-1\text{ and not
        all $b_j=(p-1)/2$}\right.\right\}.\]
  There is a bijection $S'\to S$ given by
\[(b_1,\dots,b_\lambda)\mapsto b=1+\sum_{j=1}^\lambda b_jp^{j-1}.\]
  Under this bijection, the permutation $\pi$ is given by
  $(b_1,\dots,b_\lambda)\mapsto(p-1-b_\lambda,b_1,\dots,b_{\lambda-1})$.
  
  An element $b$ belongs to $S_v$ if and only if $b<(p^\lambda+1)/2$.
  This is true if and only if, in the tuple, the entry $b_j$ with
  largest $j$ such that $b_j \neq (p-1)/2$ has the property that
  $b_j < (p-1)/2$.

\subsection{Multiplicities}
The multiplicative order of $p$ modulo $d$ is $\ell=2\lambda$.  We
define a map $\pat':S\to\WW_\lambda$ as follows: Given $b\in S$, the
\emph{pattern} $\pat'(b)$ of $b$ is the word
$w=u_{\lambda-1}\cdots u_0$ given by
\[u_j=f \text{ if } p^jb\in S_f \text{ and } u_j=v \text{ if } p^jb\in S_v.\]
(The notation $\pat'$ is used to distinguish this from the pattern in
the encompassing case.)  If the word for $b$ has length $\ell$ (the
maximum length), then it is $\pat'(b)^c\cdot\pat'(b)$ (where the $c$
stands for the complementary word).  For any $b\in S$, the word for
$b$ has a power with length $\ell$ and this power equals
$\pat'(b)^c\cdot\pat'(b)$.  Note that since $p^\lambda=-1\pmod d$, the
word of $b$ is ``self-dual'', i.e., of the form $t^c t$, for every
$b \in S$.

For a word $w$ of length $\lambda$, let $\mu'(w)$ be the number of
elements $b\in S$ with $\pat'(b)=w$.  For a word $t$ of length
$\le\lambda$, let $\mu'(\text{---}t)$ be the number of elements
$b\in S$ with $\pat'(b)=t' \cdot t$ for some $t'$, in other words, the
number of $b$ with pattern ending in $t$.

\begin{prop}\label{prop:Hermitian-multiplicities}
\mbox{} Let $p$ be odd.
\begin{enumerate}
\item Suppose $e_1,\dots,e_k$ are positive
      integers with $\sum e_i=\lambda$.
If $k$ is odd, then
    \[\mu'(f^{e_k}v^{e_{k-1}}\cdots f^{e_1})
      =\mu'(v^{e_k}f^{e_{k-1}}\cdots v^{e_1})
      =\left(\frac{p+1}2\right)^{\lambda-k}\left(\frac{p-1}2\right)^k,\]
and if $k$ is even, then
    \[\mu'(v^{e_{k}}\cdots f^{e_1})
      =\mu'(f^{e_k}\cdots v^{e_1})
      =\left(\frac{p+1}2\right)^{\lambda+1-k}\left(\frac{p-1}2\right)^{k-1}.\]
  \item More generally, given integers $e_1, \ldots, e_k > 0$, let
    $\lambda'=\sum e_i$ and suppose $\lambda' \le \lambda$.  If $k$ is
    odd, and $t$ is a word of the form
    $t=f^{e_k}v^{e_{k-1}}\cdots v^{e_2}f^{e_1}$, then
    \[\mu'(\text{---}t)
      =\mu'(\text{---}t^c) =\left(\frac{p+1}2\right)^{\lambda'-k}
      \left(\frac{p-1}2\right)^{k-1}
      \left(\frac{p^{\lambda+1-\lambda'}-1}2\right),\]
     and if $k$ is even, and $t$ is a word of the form
    $t=v^{e_k} \cdots f^{e_1}$, then
    \[\mu'(\text{---}t)
      =\mu'(\text{---} t^c)
      =\left(\frac{p+1}2\right)^{\lambda'-k}
         \left(\frac{p-1}2\right)^{k-1}
          \left(\frac{p^{\lambda+1-\lambda'}+1}2\right).\]
  \end{enumerate}
\end{prop}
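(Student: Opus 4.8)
The plan is to turn the statement into a weighted count of $p$-adic digit tuples and then into a purely combinatorial count of sign patterns. Throughout I identify $b\in S$ with its tuple $(b_1,\dots,b_\lambda)$ and attach to each digit the sign $\mathrm{sgn}(b_j-\tfrac{p-1}{2})\in\{-,0,+\}$, recording $f\leftrightarrow+$ and $v\leftrightarrow-$. First I would record the two structural facts that drive everything. Iterating the permutation $(b_1,\dots,b_\lambda)\mapsto(p-1-b_\lambda,b_1,\dots,b_{\lambda-1})$ shows that the digits of $p^jb$, listed from most to least significant, form a window $d_j,\dots,d_{j+\lambda-1}$ inside a cyclic sequence $d_0,\dots,d_{2\lambda-1}$ of length $2\lambda$ with $d_{i+\lambda}=p-1-d_i$; equivalently the sign sequence $\epsilon_i:=\mathrm{sgn}(d_i-\tfrac{p-1}{2})$ is anti-periodic, $\epsilon_{i+\lambda}=-\epsilon_i$. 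Combined with the description of $S_v$ recalled above, this gives the resolution rule $u_j=\tilde\epsilon_j$, where $\tilde\epsilon_j$ is the first nonzero among $\epsilon_j,\epsilon_{j+1},\dots$ (cyclically), equivalently the sign of the most significant non-central digit of $p^jb$. The second fact is the weighting: a first-half sign $\epsilon_i$ ($0\le i<\lambda$) equal to $+$ or $-$ corresponds to $\tfrac{p-1}{2}$ digit choices while $\epsilon_i=0$ corresponds to one, and by anti-periodicity the number of nonzero signs in the whole cycle is twice that in the first half. Hence every multiplicity is a sum of $(\tfrac{p-1}{2})^{N}$ over admissible sign patterns, $N$ being the number of nonzero first-half signs.

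Next I would describe which sign patterns resolve to a prescribed $\pm$-word $W=\tilde\epsilon$: this happens exactly when $\epsilon_i\in\{0,\tilde\epsilon_i\}$ at every position, with $\epsilon_i\neq0$ \emph{forced} at the breaks of $W$ (a zero at a break would propagate the wrong sign forward). For part (1) one takes $W=w^c w$, the length-$2\lambda$ self-dual word attached to $b$; the forced first-half positions are the breaks of $W$ among $0,\dots,\lambda-1$, and their number $F$ equals the number of internal breaks of $w$, plus one more precisely when the top letter of $w$ differs from the complement of its bottom letter. A short case check gives $F=k$ when $k$ is odd and $F=k-1$ when $k$ is even, for both words listed in each case. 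Summing the weights, namely a factor $\tfrac{p-1}{2}$ at each of the $F$ forced positions and $1+\tfrac{p-1}{2}=\tfrac{p+1}{2}$ at each of the $\lambda-F$ free ones, yields the two displayed formulas.

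For part (2) I would run the same count but leave the top $\lambda-\lambda'$ letters free. The internal breaks of $t$ are still forced, and since the scan of a position halts at the first forced nonzero it meets (at a higher index), the blocks $T_1,\dots,T_{k-1}$ of $t$ below its top block decouple and contribute $\prod_{i<k}\tfrac{p-1}{2}(\tfrac{p+1}{2})^{e_i-1}$. It then remains to count the contribution of the top block $T_k$ (common letter $c$, length $e_k$) together with the free region, subject to every position of $T_k$ resolving to $c$. Splitting on whether the top position $\lambda'-1$ of $T_k$ carries $\epsilon=c$ or $\epsilon=0$: in the first case the rest of $T_k$ and the whole free region are unconstrained, giving $\tfrac{p-1}{2}(\tfrac{p+1}{2})^{e_k-1}p^{\lambda-\lambda'}$; in the second case the common resolution of $T_k$ must be supplied from above, and summing a geometric series $\tfrac{p-1}{2}\sum_{t}p^{t}$ over the location of the first nonzero inside the free region contributes $(\tfrac{p+1}{2})^{e_k-1}\cdot\tfrac{p^{\lambda-\lambda'}-1}{2}$.

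I expect the genuine obstacle to be the ``deep fall-through'' configuration, in which $\lambda'-1$ together with the entire free region are all zero: then the scan wraps cyclically into the anti-periodic second half and resolves the top of the word to $-t_0$. This is admissible exactly when $t_{\lambda'-1}=-t_0$, i.e.\ when $k$ is even, and there it contributes a single extra unit of weight while contributing nothing when $k$ is odd. Adding this correction turns the top-block factor from $\tfrac{p^{\lambda+1-\lambda'}-1}{2}$ into $\tfrac{p^{\lambda+1-\lambda'}+1}{2}$, which is precisely the parity dichotomy in the statement; I would verify the bookkeeping via $\tfrac{p-1}{2}p^{\lambda-\lambda'}+\tfrac{p^{\lambda-\lambda'}\mp1}{2}=\tfrac{p^{\lambda+1-\lambda'}\mp1}{2}$. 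Multiplying the decoupled lower-block factor by this top-block-plus-free-region count gives both formulas, and part (1) is recovered by setting $\lambda'=\lambda$.
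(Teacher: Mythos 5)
Your proposal is correct and follows essentially the same route as the paper: both translate the pattern condition into local inequalities on the $p$-adic digits (strict at breaks, non-strict otherwise) and then count, arriving at the factors $\bigl(\frac{p\pm1}{2}\bigr)$ per digit and $\frac{p^{\lambda+1-\lambda'}\mp1}{2}$ for the low-order part. The only difference is in how that last factor is obtained: the paper counts the integer formed by the low-order digits lying in a half-open interval around $d''/2$ with $d''=p^{\lambda+1-\lambda'}+1$, whereas you locate the first non-central digit and treat the all-central ``fall-through'' configuration (which wraps into the anti-periodic second half) explicitly --- these are the same count, and your version has the mild virtue of making explicit the wrap-around case that the paper's interval formulation absorbs silently.
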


Part (1) of Proposition~\ref{prop:Hermitian-multiplicities} contradicts
\cite[Lemma~4.3]{PriesWeir15}, which we believe is in error.

\begin{proof}
Part (1) follows from part (2), so we
  will prove the latter.  It is clear that
  $\mu'(\text{---}t)=\mu'(\text{---}t^c)$.
  
  Suppose $k$ is odd and $t=f^{e_k}v^{e_{k-1}}\cdots v^{e_2}f^{e_1}$.
  Write $f^{e_k}\cdots f^{e_1}=u_{\lambda'-1}\cdots u_0$ with
  $u_j\in\{f,v\}$.  Then $b\in S$ has pattern $\text{---}t$ if and
  only if the $p$-adic digits $(b_1,\dots,b_{\lambda})$ satisfy, for
  $\lambda+1-\lambda' < j \leq \lambda$,
  \begin{equation} \label{Econditionb}
    \begin{cases}
      b_j\le(p-1)/2&\text{if $u_{\lambda-j}=v$  and $j$ is not a break of $t$,}\\
      b_j<(p-1)/2&\text{if $u_{\lambda-j}=v$ 
      and $j$ is a break of $t$,}\\
      b_j\ge(p-1)/2&\text{if $u_{\lambda-j}=f$  and $j$ is not a break of $t$,}\\
      b_j>(p-1)/2&\text{if $u_{\lambda-j}=f$  and $j$ is a break of $t$,}
    \end{cases}
    \end{equation}
and the number corresponding to the tuple 
$\beta=(b_1, \dots, b_{\lambda+1-\lambda'})$ is large, namely  
\[p^{\lambda+1-\lambda'}+1>
  1+\sum_{j=1}^{\lambda+1-\lambda'}b_jp^{j-1}
  >(p^{\lambda+1-\lambda'}+1)/2.\]
So there are $(p^{\lambda+1-\lambda'}-1)/2$ choices for $\beta$.
Taking the product with the number of possibilities for $b_j$ for
$\lambda+1-\lambda' < j \leq \lambda$ yields the quantity in the
statement.

Similarly, if $k$ is even and
$t=v^{e_k}\cdots f^{e_1}=u_{\lambda'-1}\cdots u_0$ with
$u_j\in\{f,v\}$, then $b\in S$ has pattern $\text{---}t$ if and only
if the $p$-adic digits $(b_1,\dots,b_{\lambda})$ satisfy
\eqref{Econditionb}, for $\lambda+1-\lambda'<j\le\lambda$
and the number corresponding to the tuple $\beta$ is small, namely
\[0<1+\sum_{j=1}^{\lambda+1-\lambda'}b_jp^{j-1}
  \le (p^{\lambda+1-\lambda'}+1)/2.\]
So there are $(p^{\lambda+1-\lambda'}+1)/2$ choices for $\beta$.
Again, taking the product with the number of possibilities for $b_j$
for $\lambda+1-\lambda' < j \leq \lambda$ yields the quantity in the
statement.
\end{proof}


\begin{thm}\label{thm:EO-Hermitian}
  Let $p$ be odd and $d=p^\lambda+1$. Let $\CC_d$ be the curve
  $y^d=x(1-x)$ and let $J_d$ be its Jacobian.  Then the Ekedahl--Oort
  type of $J_d$ has the form
    \[ [\to^{\mu'_0}\nearrow^{\mu'_1}\cdots\to^{\mu'_{2^{\lambda-1}-1}}]\]
    where, letting $k(i)$ be the function described in Lemma~\ref{lemma:k},
    \[\mu'_i=
      \begin{cases}
      \left(\frac{p+1}2\right)^{\lambda-k(i)-1}\left(\frac{p-1}2\right)^{k(i)+1}
      &\text{if $i$ is even,}\\
      \left(\frac{p+1}2\right)^{\lambda-k(i)}\left(\frac{p-1}2\right)^{k(i)}
      &\text{if $i$ is odd.}\\      
      \end{cases}\]
\end{thm}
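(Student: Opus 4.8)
The plan is to deduce this from the general result Theorem~\ref{thm:FEO-general} by specializing to $d=p^\lambda+1$ and exploiting the self-duality of all patterns in this case. First I would record that the order of $p$ modulo $d$ is $\ell=2\lambda$, so Theorem~\ref{thm:FEO-general} expresses the E--O structure as $[\nearrow^{\mu_0}\to^{\mu_1}\cdots\to^{\mu_{2^{\ell-1}-1}}]$, where $\mu_j=\mu(w_j)$ and $w_0<w_1<\cdots$ are the length-$\ell$ words beginning with $f$ in lexicographic order. The key reduction is that, since $p^\lambda\equiv-1\pmod d$, every pattern is self-dual, $\pat(b)=\pat'(b)^c\cdot\pat'(b)$, as noted in Section~\ref{s:hermitian}. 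Hence $\mu(w)=0$ unless $w$ has the form $t^c\cdot t$ with $t$ of length $\lambda$, and for such $w$ one has $\mu(t^c t)=\mu'(t)$. Deleting the empty sections leaves only the self-dual words, which begin with $f$ precisely when the half-word $t$ begins with $v$; there are exactly $2^{\lambda-1}$ of these, matching the index range $0\le i\le 2^{\lambda-1}-1$. In particular $w_0=f^{2\lambda}$ is not self-dual, so $\mu_0=0$ and the E--O type indeed begins with a constant section, consistent with $J_d$ being supersingular.

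The heart of the argument is a reindexing, which I would carry out as follows. Writing a self-dual word beginning with $f$ as $t^c t$, its left half $t^c$ is a length-$\lambda$ word beginning with $f$, and the first position at which two distinct such words differ lies in their left halves; hence the lexicographic order on these self-dual words agrees with the lexicographic order on the half-words $t^c$. Thus the $i$-th self-dual word has $t_i^c$ equal to the $i$-th length-$\lambda$ word beginning with $f$ in the enumeration of Lemma~\ref{lemma:k}, that is, $t_i=w_i^c$. Since complementation preserves run structure, the number of internal breaks of $t_i$ equals $k(i)$, so $t_i$ has exactly $k(i)+1$ runs. Moreover the self-dual word $t_i^c t_i$ ends in the last letter of $t_i$, which is the complement of the last letter of $w_i$: this is $v$ when $i$ is even and $f$ when $i$ is odd, so by Theorem~\ref{thm:FEO-general} the block produces a constant section for $i$ even and an increasing section for $i$ odd, matching the claimed shape $[\to^{\mu'_0}\nearrow^{\mu'_1}\cdots]$.

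It then remains to evaluate $\mu'_i=\mu'(t_i)$ via Proposition~\ref{prop:Hermitian-multiplicities}(1), applied to a word with $K_i=k(i)+1$ runs. The two cases there are governed by the parity of the number of runs, so I would verify the elementary fact $k(i)\equiv i\pmod 2$: starting from the leading letter $f$ of $w_i$, each break toggles the current letter, so $w_i$ ends in $v$ (equivalently $i$ is odd) exactly when $k(i)$ is odd. When $i$ is even, $K_i=k(i)+1$ is odd and $t_i$ begins and ends with $v$, so Proposition~\ref{prop:Hermitian-multiplicities}(1) gives $\mu'_i=\left(\tfrac{p+1}2\right)^{\lambda-k(i)-1}\left(\tfrac{p-1}2\right)^{k(i)+1}$; when $i$ is odd, $K_i$ is even and $t_i$ begins with $v$ and ends with $f$, yielding $\mu'_i=\left(\tfrac{p+1}2\right)^{\lambda-k(i)}\left(\tfrac{p-1}2\right)^{k(i)}$. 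These are exactly the two cases in the statement, completing the proof.

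I would expect the main obstacle to be the bookkeeping in the reindexing step: keeping straight that complementation reverses lexicographic order while preserving run counts, so that the universal break-counting function $k(i)$ of Lemma~\ref{lemma:k} (defined for words beginning with $f$) correctly records the run structure of the half-words $t_i$ beginning with $v$, and that the resulting parities align precisely with the even/odd dichotomy of Proposition~\ref{prop:Hermitian-multiplicities}.
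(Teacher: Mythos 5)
Your proposal is correct and follows exactly the paper's route: it specializes Theorem~\ref{thm:FEO-general} using the self-duality of patterns forced by $p^\lambda\equiv-1\pmod d$, evaluates the surviving multiplicities via Proposition~\ref{prop:Hermitian-multiplicities}, and uses Lemma~\ref{lemma:k} with the run count $k(i)+1$ — precisely the observation the paper records parenthetically. Your write-up simply makes explicit the reindexing and parity bookkeeping that the paper's terse proof leaves to the reader.
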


\begin{proof}
  This follows immediately from Theorem~\ref{thm:FEO-general}, the
  multiplicities in Proposition~\ref{prop:Hermitian-multiplicities},
  and the number of breaks in Lemma~\ref{lemma:k}.  (Note that the $k$
  in Proposition~\ref{prop:Hermitian-multiplicities} for $w_i$ is
  $k(i)+1$.)
\end{proof}

\subsection{Examples}
Let $p$ be odd.
\begin{enumerate}
\item If $\lambda=1$, the curve $\CC_d$ has genus $(p-1)/2$ and is
  superspecial: the list of words starting with $f$ and with positive
  multiplicity is $fv$, and the elementary sequence is
  $[\to^{(p-1)/2}]=[0,\dots,0]$.
\item If $\lambda=2$, the list of words is $ffvv$, $fvvf$, and
  the elementary sequence has a constant section of length $(p^2-1)/4$
  and an increasing section of length $(p^2-1)/4$:
  \[  [\to^{(p^2-1)/4}\nearrow^{(p^2-1)/4}]
    =[0,\dots,0,1,2,\dots,(p^2-1)/4].\]
\item If $\lambda=3$, the list of words is
  $f^3v^3,f^2v^3f,(fv)^3,fv^3f^2$, and the elementary sequence has four
  segments and has the form $[\to^m\nearrow^m\to^n\nearrow^m]$ where
  $m=(p+1)^2(p-1)/8$ and $n=(p-1)^3/8$.
  \end{enumerate}
  
\begin{prop}
  Let $p$ be odd, let $\lambda$ be a positive integer, and let
  $d=p^\lambda +1$.  Then:
\begin{enumerate}
\item the $p$-rank of $J_d[p]$ is $0$;
\item the $a$-number of $J_d[p]$ is $(p-1)(p^{\lambda-1}+1)/4$;
\item the $s_{1,1}$-multiplicity of $J_d[p]$ is $0$ if $\lambda$ is
  even and $(\frac{p-1}2)^{\lambda}$ if $\lambda$ is odd;
\item and the $u_{1,1}$-number of $J_d[p]$ is the sum of the
  $s_{1,1}$-multiplicity and
\[\sum_{j=0}^{\lfloor(\lambda-4)/2\rfloor}
    \left(\frac{p+1}2\right)^2\left(\frac{p-1}2\right)^{2j+1}
    \left(\frac{p^{\lambda-3-2j}+1}2\right)\\
    +\begin{cases}
      0&\text{if $\lambda=1$,}\\
      \left(\frac{p+1}2\right)^2\left(\frac{p-1}2\right)^{\lambda-2}
                    &\text{if $\lambda>1$ and odd,}\\
                   \left(\frac{p+1}2\right)\left(\frac{p-1}2\right)^{\lambda-1}
                    &\text{if $\lambda$ even.}\\
    \end{cases}\]

\end{enumerate}
\end{prop}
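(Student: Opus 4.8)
The plan is to derive all four invariants from Proposition~\ref{prop:FEO-general-numbers}, which writes each as a sum of multiplicities $\mu(w)$ of words of length $\ell=2\lambda$, and then to evaluate those multiplicities with Proposition~\ref{prop:Hermitian-multiplicities}. The one structural fact that makes every step uniform is the self-duality of the patterns: since $p^\lambda\equiv-1\pmod d$, every pattern arising from $S$ has the form $t^c t$ for a unique half-word $t$ of length $\lambda$, and under this identification $\mu(t^c t)=\mu'(t)$, while $\mu(w)=0$ for any length-$\ell$ word $w$ not of this form. I would record this bridge first, as it is used throughout. In particular every pattern is balanced (equally many $f$'s and $v$'s), so $\mu(f^{2\lambda})=0$, giving the $p$-rank in part~(1).

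For part~(2), the $a$-number is $\mu(\text{---}vf)$ by Proposition~\ref{prop:FEO-general-numbers}(2). For $\lambda\ge2$ the last two letters of $t^c t$ are the last two of $t$, so $\mu(\text{---}vf)=\mu'(\text{---}vf)$, and Proposition~\ref{prop:Hermitian-multiplicities}(2) with $t=vf$ (so $k=2$, $\lambda'=2$) gives $(\frac{p-1}2)(\frac{p^{\lambda-1}+1}2)=(p-1)(p^{\lambda-1}+1)/4$; the case $\lambda=1$ is the superspecial case and is checked directly to agree. For part~(3), the $s_{1,1}$-multiplicity is $\mu((fv)^\lambda)$ since $\ell=2\lambda$ is even; a parity check shows $(fv)^\lambda$ is self-dual exactly when $\lambda$ is odd, in which case its half-word is the fully alternating word of length $\lambda$ (so $k=\lambda$) and Proposition~\ref{prop:Hermitian-multiplicities}(1) yields $(\frac{p-1}2)^\lambda$, whereas for $\lambda$ even the word is not self-dual and the multiplicity is $0$.

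Part~(4) is where the work lies. By Proposition~\ref{prop:FEO-general-numbers}(4) the $u_{1,1}$-number is $s_{1,1}$ plus $\sum_{e=0}^{\lambda-2}\mu(\text{---}w_e)$ with $w_e:=v^2(fv)^ef^2$ of length $2e+4$, and I would split the range of $e$ according to whether $w_e$ fits inside the bottom half of the pattern. In the interior range $2e+4\le\lambda$ (that is, $0\le e\le\lfloor(\lambda-4)/2\rfloor$) the tail lies entirely within the last $\lambda$ letters, so $\mu(\text{---}w_e)=\mu'(\text{---}w_e)$; here $w_e$ has block data $k=2e+2$ (even) and $\lambda'=2e+4$, and Proposition~\ref{prop:Hermitian-multiplicities}(2) reproduces exactly the displayed sum $\sum_{j=0}^{\lfloor(\lambda-4)/2\rfloor}(\frac{p+1}2)^2(\frac{p-1}2)^{2j+1}(\frac{p^{\lambda-3-2j}+1}2)$.

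The remaining boundary terms, with $2e+4>\lambda$, form the main obstacle. Here $w_e$ overruns the bottom half $t$ and reaches into the top half $t^c$, so $\mu(\text{---}w_e)$ is nonzero only when $w_e$ is itself compatible with the self-dual structure, i.e.\ when $(w_e)_{\lambda+i}=(w_e)_i^c$ for $0\le i<(2e+4)-\lambda$. I would observe that $w_e$ is alternating apart from a doubled $f$ at its bottom and a doubled $v$ at its top, so these complementation constraints survive only for the smallest boundary value of $e$ and are broken by the parity defect for every larger $e$; thus all boundary terms vanish except one. A short parity computation identifies the survivor: writing $m:=(2e+4)-\lambda$, one gets $m=1$ when $\lambda$ is odd (survivor $e=(\lambda-3)/2$) and $m=2$ when $\lambda$ is even (survivor $e=\lambda/2-1$), with no boundary term at all when $\lambda=1$. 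Extracting the half-word $t$ of the survivor (of length exactly $\lambda$) and applying Proposition~\ref{prop:Hermitian-multiplicities}(1) then gives $(\frac{p+1}2)^2(\frac{p-1}2)^{\lambda-2}$ for $\lambda>1$ odd and $(\frac{p+1}2)(\frac{p-1}2)^{\lambda-1}$ for $\lambda$ even, which is precisely the case-split correction in the statement. The only genuinely delicate point is confirming that every non-minimal boundary term vanishes; I expect this to follow cleanly by tracking the two defects of $w_e$ against the shift by $\lambda$, but it is the step that demands care.
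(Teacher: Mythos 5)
Your proposal is correct and follows exactly the route the paper takes: the paper's proof is the single sentence ``this follows from Proposition~\ref{prop:FEO-general-numbers} using the multiplicities in Proposition~\ref{prop:Hermitian-multiplicities},'' and your argument is precisely that computation carried out in detail, including the self-duality bridge $\mu(t^c t)=\mu'(t)$ and the parity analysis showing that exactly one boundary tail $v^2(fv)^ef^2$ with $2e+4>\lambda$ survives. The identification of the survivor ($m=1$ for $\lambda$ odd, $m=2$ for $\lambda$ even) and the resulting block data ($k=\lambda-1$ with $\lambda'=\lambda$ in both cases) check out against the stated formulas.
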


The analogue of the $a$-number calculation in part (2) for the Fermat
curve of degree $d=p^\lambda+1$ is given in
\cite[Prop.~14.10]{Gross90}.

\begin{proof}
  This follows from Proposition~\ref{prop:FEO-general-numbers} using
  the multiplicities in
  Proposition~\ref{prop:Hermitian-multiplicities}.



\end{proof}

\bibliography{database}

\end{document}